\documentclass{amsart}

\usepackage{mathrsfs}
\usepackage[OT2, T1]{fontenc}
\usepackage{url}
\usepackage{amsmath}
\usepackage{array}
\usepackage{graphicx}
\usepackage{amsfonts}
\usepackage{amssymb}
\usepackage{amstext}
\usepackage{amsthm}
\usepackage{hyperref}
\usepackage{colonequals}
\usepackage{enumitem}
\usepackage[alphabetic,lite]{amsrefs}
\usepackage{cleveref}
\usepackage[all,cmtip]{xy}
\usepackage{fullpage}

\setcounter{totalnumber}{3}
\setcounter{topnumber}{1}
\setcounter{bottomnumber}{3}
\setcounter{secnumdepth}{3}

\numberwithin{equation}{subsection}

\newtheorem{theorem}{Theorem}
\newtheorem{lemma}[theorem]{Lemma}
\newtheorem{proposition}[theorem]{Proposition}
\newtheorem{corollary}[theorem]{Corollary}

\theoremstyle{definition}
\newtheorem{defn}[theorem]{Definition}

\theoremstyle{remark}
\newtheorem{rem}[theorem]{Remark}
\newtheorem*{claim}{Claim}
\newtheorem{para}[theorem]{}

\newcommand{\pdiv}{\mathscr{G}}

\newcommand{\bB}{\mathbb{B}}
\newcommand{\bC}{\mathbb{C}}
\newcommand{\bD}{\mathbb{D}}

\newcommand{\bF}{\mathbb{F}}

\newcommand{\bH}{\mathbb{H}}
\newcommand{\bI}{\mathbb{I}}
\newcommand{\bJ}{\mathbb{J}}

\newcommand{\bN}{\mathbb{N}}
\newcommand{\bP}{\mathbb{P}}
\newcommand{\bQ}{\mathbb{Q}}
\newcommand{\bR}{\mathbb{R}}

\newcommand{\bZ}{\mathbb{Z}}


\newcommand{\cA}{\mathcal{A}}
\newcommand{\cB}{\mathcal{B}}

\newcommand{\cD}{\mathcal{D}}
\newcommand{\cE}{\mathcal{E}}
\newcommand{\cF}{\mathcal{F}}

\newcommand{\cH}{\mathcal{H}}

\newcommand{\cM}{\mathcal{M}}

\newcommand{\cO}{\mathcal{O}}

\newcommand{\cS}{\mathcal{S}}
\newcommand{\cT}{\mathcal{T}}

\newcommand{\cZ}{\mathcal{Z}}

\newcommand{\fa}{\mathfrak{a}}
\newcommand{\fd}{\mathfrak{d}}
\newcommand{\fp}{\mathfrak{p}}

\newcommand{\bcH}{\overline{\cH}^{\rm{tor}}}

\newcommand{\homg}{\overline{\omega}}
\newcommand{\sa}{\textrm{univ-sa}}

\newcommand{\cris}{{\mathrm{cris}}}

\newcommand{\cm}{{\mathrm{CM}}}
\newcommand{\pet}{{\mathrm{Pet}}}

\newcommand{\inj}{\hookrightarrow}

\DeclareMathOperator{\GL}{GL}
\DeclareMathOperator{\SL}{SL}
\DeclareMathOperator{\GSp}{GSp}

\DeclareMathOperator{\SO}{SO}

\DeclareMathOperator{\Gal}{Gal}
\DeclareMathOperator{\End}{End}
\DeclareMathOperator{\Hom}{Hom}
\DeclareMathOperator{\Aut}{Aut}

\DeclareMathOperator{\Res}{Res}
\DeclareMathOperator{\Spec}{Spec}
\DeclareMathOperator{\ad}{ad}
\DeclareMathOperator{\Span}{Span}

\DeclareMathOperator{\Fil}{Fil}
\DeclareMathOperator{\disc}{disc}
\DeclareMathOperator{\diag}{diag}
\DeclareMathOperator{\Nm}{Nm}
\DeclareMathOperator{\Deg}{Deg}
\DeclareMathOperator{\Id}{Id}
\DeclareMathOperator{\Div}{Div}
\DeclareMathOperator{\Length}{Length}
\DeclareMathOperator{\Tr}{Tr}

\DeclareMathOperator{\Li}{Li}

\usepackage[usenames,dvipsnames]{color}  

\newcommand{\revise}[1]{{\color{Blue}  #1}}

\begin{document}

\title[splitting of abelian surfaces]{Exceptional splitting of reductions of abelian surfaces}

\author{Ananth N. Shankar}
\author {Yunqing Tang}

\maketitle

\begin{abstract}
Heuristics based on the Sato--Tate conjecture suggest that an abelian surface defined over a number field has infinitely many places of split reduction. We prove this result for abelian surfaces with real multiplication. As in \cite{Ch} and \cite{Elk2}, this shows that a density-zero set of primes pertaining to the reduction of abelian varieties is infinite. The proof relies on the Arakelov intersection theory on Hilbert modular surfaces.
\end{abstract}

\section{Introduction}\label{intro}
\subsection{Infinitely many nonsimple reductions of a given abelian surface}
Murty and Patankar conjectured in \cite{MP} that an absolutely simple abelian variety over a number field has absolutely simple reduction for a density one set of primes (up to a finite extension) if and only if its endomorphism ring is commutative. Chavdarov (\cite{Cha}) proved their conjecture in the case of abelian varieties of dimension 2 or 6 whose geometric endomorphism ring is $\bZ$. Conditional upon the Mumford--Tate conjecture, Zywina (\cite{Zyw}) established Murty and Patankar's conjecture in full generality. 

It is natural to inquire whether the set of primes (conjecturally a density zero set!) at which a given abelian variety does not have absolutely simple reduction is finite or infinite. 
Based on the Sato--Tate conjecture for abelian surfaces (see \S\ref{sec_ST}), it is expected that the (density zero) set of places of nonsimple reduction of a simple abelian surface is infinite. 
The main result of this paper is the following: 
\begin{theorem}\label{main}
Let $A$ be an abelian surface over a number field $K$. Suppose that $F \subset \End(A) \otimes \bQ$, where $F$ is a real quadratic field. Then $A$ modulo $v$ is not absolutely simple for infinitely many primes $v$ of $K$.
\end{theorem}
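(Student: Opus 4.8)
The plan is to translate the statement into one about the reduction behaviour of a point of the Hilbert modular surface for $F$, and to study it by Arakelov intersection theory against the Hirzebruch--Zagier divisors, in the spirit of \cite{Ch} and \cite{Elk2}.

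\textbf{Reduction to the generic case.} By the Albert classification, for an abelian surface $A$ with $F\subset\End(A)\otimes\bQ$ the algebra $\End(A)\otimes\bQ$ is one of: $F$; a quartic CM field $L\supset F$; an indefinite quaternion algebra over $\bQ$ containing $F$; or an algebra containing $M_2(\bQ)$, in which case $A\sim E^{2}$ already and there is nothing to prove. If $A$ has CM by $L$, then by Honda--Tate $A\bmod v$ is non-simple exactly when the $v$-Frobenius generates a proper subfield of $L$ (equivalently $A\bmod v\sim E^{2}$), and Chebotarev produces infinitely many such $v$; if $\End(A)\otimes\bQ$ is an indefinite quaternion algebra, then at every prime of ordinary reduction $\End^{0}(A\bmod v)$ contains a noncommutative algebra, so $A\bmod v$ is not simple. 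Hence we may assume $A$ is absolutely simple with $\End(A)\otimes\bQ=F$. After a finite extension of $K$ and the choice of an auxiliary level, $A$ then defines a section $\cP$ of a fine integral model $\cX$, over $\mathcal O_{K}[1/N_{0}]$ for suitable $N_0$, of the Hilbert modular surface for $F$, with a smooth toroidal compactification $\overline{\cX}$.

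\textbf{Splitting as incidence with special divisors.} Since a real quadratic field does not embed into the endomorphism algebra of a product of non-isogenous elliptic curves (nor of an ordinary curve and a supersingular one), Honda--Tate shows that if $A\bmod v$ is not absolutely simple then $A\bmod v$ is $\mathcal O_{F}$-linearly isogenous over $\overline{\bF}_{v}$ to $E\otimes_{\bZ}\mathcal O_{F}$ for a single elliptic curve $E$. Let $\cZ\subset\cX$ be the curve of abelian surfaces $\mathcal O_{F}$-linearly isogenous to $E\otimes_{\bZ}\mathcal O_{F}$ for some $E$ --- the ``product'' locus, the image of $Y(1)$ under $E\mapsto E\otimes_{\bZ}\mathcal O_{F}$ --- and let $T_{N}$, $N\geq 1$, be its Hecke translates, the relevant Hirzebruch--Zagier divisors. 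Then $A\bmod v$ is not absolutely simple $\iff A\bmod v\sim E^{2}\iff\cP_{v}\in T_{N}$ for some $N$; and the generic fibre $P=\cP_{K}$ lies on no $T_{N}$, because $A$ is absolutely simple. So it suffices to show that $\cP_{v}\in\bigcup_{N\geq1}T_{N}$ for infinitely many $v$.

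\textbf{The arithmetic intersection argument.} Assume, for contradiction, that this happens only for $v$ in a finite set $S$; then for every $N$ the finite part of the arithmetic intersection of $\cP$ with the Zariski closure $\overline{T}_{N}\subset\overline{\cX}$ is supported on $S$. The generating series $\sum_{N\geq1}\overline{T}_{N}\,q^{N}$ is a weight-two modular form valued in $\widehat{\mathrm{CH}}{}^{1}(\overline{\cX})$ --- arithmetic Hirzebruch--Zagier/Kudla modularity on Hilbert modular surfaces, available through Borcherds products and the arithmetic intersection theory of Bruinier--Burgos--K\"uhn. Applying the product formula to a Borcherds product $\Psi_{N}$ whose divisor is $\overline{T}_{N}$ plus a fixed auxiliary divisor of weight $\asymp N$ gives, for our fixed non-special point $P$,
\[
\sum_{v\nmid\infty}i_{v}(\cP,T_{N})\log\#k(v)\;=\;-\sum_{\sigma\colon K\hookrightarrow\bC}\log\|\Psi_{N}(P^{\sigma})\|_{\sigma}+O(N)\;\asymp\;N\log N,
\]
the growth $N\log N$ being the exact analogue of $\log|\Phi_{N}(j_{1},j_{2})|\asymp N\log N$ for the modular polynomial in \cite{Ch}, and coming from equidistribution of the $T_{N}$ together with the asymptotics of the Green functions and Petersson norms, including along the toroidal boundary. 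On the other hand, for each fixed $v\in S$, because $A$ is absolutely simple with $\End(A)\otimes\bQ=F$ --- in particular not of CM type --- the extra endomorphisms of $A\bmod v$ do not lift to characteristic zero, and a deformation-theoretic estimate in the $v$-adic Serre--Tate coordinates (respectively Grothendieck--Messing coordinates, in the almost-ordinary and supersingular cases) bounds $i_{v}(\cP,T_{N})\ll_{v}\log N$. Thus the left-hand side is $O(\log N)$ while the right-hand side is $\asymp N\log N$, which is impossible for $N$ large. This contradiction shows that $\cP_{v}\in\bigcup_{N}T_{N}$, equivalently $A\bmod v$ is not absolutely simple, for infinitely many $v$.

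\textbf{Main obstacle.} The heart of the matter is exactly the local estimate $i_{v}(\cP,T_{N})=o(N)$ at the hypothetical bad primes: one must show that the fixed surface $A$ cannot be $v$-adically too tangent to the infinitely many Hirzebruch--Zagier divisors that its reduction meets. This demands a genuine case analysis of the $v$-adic deformation theory of $(A,\iota)$ according to the Newton polygon of $A\bmod v$, and it is precisely here that the hypothesis ``$A$ is not of CM type'' enters --- for CM $A$ the relevant endomorphisms do lift, the order of contact is unbounded, and the conclusion is instead obtained by the easy argument above. The two other technical points are the precise form of the arithmetic modularity and of the Green-function/Petersson-norm asymptotics on the \emph{non-compact} surface $\cX$ (behaviour of the $T_{N}$ and of the Borcherds products along the toroidal boundary and at the cusps), and extracting the $N\log N$ archimedean growth with the correct sign from the equidistribution of the $T_{N}$.
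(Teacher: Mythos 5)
Your proposal is in the same broad spirit as the paper (Arakelov intersection on the Hilbert modular surface, following Charles and Elkies), but it takes a genuinely different route, essentially transposing the roles of point and divisor. You hold $[A]$ fixed and let the Hirzebruch--Zagier divisor $T_N$ vary, then appeal to arithmetic Kudla modularity and Borcherds products $\Psi_N$ of weight growing like $N$. The paper instead uses Borcherds' theory once, to construct a single \emph{compact} divisor $\sum_r c_r T(r)$ (a sum of Shimura curves $T(qD)$, $q$ inert), and varies the point via Hecke correspondences $T_\fp$ for $p$ split in the narrow Hilbert class field. The two are adjoint via $T_\fp\,T(r)=T(pr)$, so the finite parts of the intersections match, but the global and archimedean sides look quite different. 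By working with a compact divisor the paper avoids the cusps entirely in the archimedean analysis (an issue you flag but don't resolve), and, crucially, it gets the global $p\log p$ growth not from an estimate on Petersson norms of Borcherds products but from an explicit Autissier-type identity $\sum_{[B]\in T_\fp[A]}h_F(B)=(p+1)h_F(A)+\tfrac{p-1}{2}\log p$ (\Cref{thm_glb}), which is much cleaner than establishing the asymptotic you assert. Your formula also appears to have the archimedean contribution with the wrong sign relative to the height identity $\sum_v i_v\log\#k(v)=k_N\,h_F(A)+\sum_\sigma\log\|\Psi_N(P^\sigma)\|$.

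There is one genuine gap beyond the technical choices. You bound the local intersection by $i_v(\cP,T_N)\ll_v\log N$ on the grounds that ``the extra endomorphisms of $A\bmod v$ do not lift to characteristic zero.'' This is not enough. The relevant object is the $\bZ_\ell$-module $\Lambda_v$ of special endomorphisms of the $\ell$-divisible group $\cA[\ell^\infty]$ over $\cO_{K_v}$: these can be nonzero even when $\End(A_{\overline K})=\cO_F$, since endomorphisms of $p$-divisible groups need not arise from endomorphisms of the abelian surface over the generic fiber. When $\Lambda_v\neq0$, those special endomorphisms persist modulo $v^n$ for all $n$, and the naive deformation-theoretic bound on $i_v(\cP,T_N)$ fails for the corresponding norms $N$. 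The paper handles exactly this in \Cref{numspecial} (identifying the stable part $\Lambda_v$ of the modules $M_{v,n}$) together with the geometry-of-numbers bounds in \Cref{dav}--\Cref{longshortvector} and the rank bound $m'\le 2$ of \Cref{twodim}; the conclusion then only holds for a density-one set of Hecke operators, not uniformly. Indeed the authors record that an earlier version of their own argument implicitly assumed $\Lambda_v=0$. Your sketch reproduces exactly that oversight, so the local step as written would fail.
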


\subsection{A heuristic based on the Sato--Tate conjecture}\label{sec_ST}
The classical Sato--Tate conjecture addresses the distribution of Frobenius elements involved in the Galois representation on the \'etale cohomology of a fixed elliptic curve defined over a number field.
The work of Katz--Sarnak \cite{KS}, Serre \cite{Se12}, and Fit\'e--Kedlaya--Rotger--Sutherland (\cite{FKRS})  generalizes this conjecture to higher dimensional abelian varieties. We focus on the case of abelian surfaces with real multiplication and offer a heuristic which indicates that such surfaces have infinitely many places of nonsimple reduction. 

For simplicity, 
assume that $A$ is an abelian surface defined over $\bQ$ such that $\End(A)\otimes \bQ=\End(A_{\overline{\bQ}})\otimes \bQ=F$, a real quadratic field.
For each prime $\ell$ of good reduction for $A$, the characteristic polynomial of the Frobenius endomorphism of $A$ modulo $\ell$ is of the form $x^4 + a_1 x^3 + a_2 x^2 + \ell a_1 x + \ell^2$, with $a_1,a_2 \in \bZ$. The roots of this polynomial come in complex conjugate pairs $\lambda_1, \overline{\lambda_1}, \lambda_2, \overline{\lambda_2}$, and each $\lambda_i$ has absolute value $\ell^{1/2}$. Define $s_{i,\ell} = \frac{\lambda_i + \overline{\lambda_i}}{\sqrt{\ell}}$. The Sato--Tate conjecture for abelian surfaces with real multiplication predicts that the distribution of $(s_{1,\ell},s_{2,\ell}) \in [-2,2] \times [-2,2]$, as $\ell$ varies, converges to the measure on $[-2,2]\times [-2,2]$ defined by the function $(\frac{4}{\pi})^2\sqrt{4-s_1^2}\sqrt{4-s_2^2}$ (for instance, see \cite{Ked}). Assuming fast enough rates of convergence to this measure, the probability that $|s_{1,\ell} - s_{2,\ell}| < \frac{1}{\sqrt{\ell}}$ is approximately  the area of the region $V_{\ell} = \{(s_1,s_2): |s_1 - s_2| < \frac{1}{\sqrt{\ell}}\}\subset [-2,2] \times [-2,2]$, which is approximately $\frac{1}{\sqrt{\ell}}$.\footnote{More precisely, there exist absolute constants $C_1,C_2>0$ such that the probability that $|s_{1,\ell} - s_{2,\ell}| < \frac{1}{\sqrt{\ell}}$ is greater than $C_1/\sqrt{\ell}$ and less than $C_2/\sqrt{\ell}$.}

If $|s_{1,\ell} - s_{2,\ell}| < \frac{1}{\sqrt{\ell}}$, it is easy to see that $s_{1,\ell}$ and $s_{2,\ell}$ must be equal. Then by Honda--Tate theory, $A$ modulo $\ell$ is not simple. This gives a heuristic lower bound ($\approx \frac 1{\sqrt{\ell}}$) for the probability that $A$ mod $\ell$ is not simple\footnote{This does not take into account those primes modulo which $A$ is simple, but not absolutely simple.}. On the other hand, $\displaystyle{\sum_{\ell \text{ prime}}\frac 1 {\sqrt{\ell}}}$ diverges, so $A$ should have infinitely many primes of nonsimple reduction.

\subsection{Related results} 
The Sato--Tate conjectures for elliptic curves and pairs of elliptic curves also suggests that the set of primes in either of the following two situations is infinite:
\begin{enumerate}
\item given an elliptic curve $E$ over a number field, consider primes $v$ such that $E$ mod $v$ is supersingular;
\item given a pair of non-isogenous elliptic curves $E_1,E_2$ over a number field, consider primes $v$ such that the reductions of $E_1,E_2$ mod $v$ become geometrically isogenous.
\end{enumerate}

As in our case, both sets of primes have density zero (after taking a finite extension). Indeed, Serre conjectured that up to taking a finite extension of the field of definition, a given abelian variety over a number field has ordinary reduction at a density-one set of primes. Katz proved Serre's conjecture in the case of elliptic curves and abelian surfaces (\cite[pages 370--372]{Ogus}). 
Sawin (in \cite{Sawin}) made explicit the smallest field extension that is required for abelian surfaces.
The second set also has density zero (after taking a finite extension) by Faltings' isogeny theorem (\cite{F85}). 

Elkies proved (1) in \cite{Elk1,Elk2} when the elliptic curve is defined over a number field with at least one real embedding and Charles proved (2) in \cite{Ch}.
\Cref{main} is an analogue of these two results. Indeed, all three results establish that certain thin sets of primes related to the reduction of abelian varieties are infinite.

\subsection{The strategy of the proof}
The proof of \Cref{main} builds on the idea of the proof of the main theorem in \cite{Ch}, where Charles uses Arakelov intersection theory on the modular curve $X_0(1)$ to prove his result. In our case, we use Arakelov intersection theory on the Hilbert modular surface $\cH$ (see \S\ref{sub_H&HZ} for the precise definition). 

Let $[A] \in \cH(K)$ denote the point determined by $A$. Loosely speaking, the modular curve embeds canonically into $\cH$ (we label its image $\Delta$) and parameterizes the locus of split abelian surfaces (along with the product polarization). 
A natural strategy is to consider the Arakelov intersection of $\Delta$ with Hecke orbits of $[A]$.\footnote{Here we refer to the $1$-cycle, given by taking the Zariski closure of the Hecke orbit of $[A]$ in $\cH$ over $\Spec \bZ$.} The local contribution at a finite prime $v$ is positive precisely when the Hecke orbit of $[A]$ intersects $\Delta$ modulo $v$. The reduction of $A$ modulo $v$ would be geometrically nonsimple for such $v$. 

In our proof, we replace $\Delta$ with a compact Hirzebruch--Zagier divisor $\cT$ (see \S\ref{sub_H&HZ} and \S\ref{sub_Borcherds&ht} for definitions). Hirzebruch--Zagier divisors of $\cH$ have the feature that the rank of the N\'eron--Severi group of an abelian surface $B$ increases if $[B]$ lies on these divisors. This has the consequence that over a finite field, an abelian surface is not absolutely simple if it lies on a special divisor. There are two advantages of using a \emph{compact} Hirzebruch--Zagier divisor $\cT$: the first is that we do not have to deal with places of bad reduction for $A$. The second is that we are able to avoid all the cusps of $\cH$ in the archimedean contribution to the global Arakelov intersection. 

In order to prove \Cref{main}, it would suffice to prove that the set of primes which contribute to the intersection is infinite as we vary over infinitely many well-chosen Hecke orbits of $[A]$. There are two steps involved in proving this: 

\begin{itemize}

\item A local step, where we bound the local contribution of the intersection at every place. 

\item A global step, where we compute the growth of the Arakelov intersection of Hecke orbits of $[A]$ with $\cT$. The growth is expressed in terms of the degree of the Hecke operators, and is seen to grow asymptotically faster than the local contributions at each place.

\end{itemize}
Consequently, it follows that more and more primes contribute to this intersection as we vary the Hecke orbit of $A$. 

The abelian surfaces parametrized by Hirzebruch--Zagier divisors have extra special endomorphisms (recalled in \S\ref{sub_H&HZ}).
In the non-archimedean case, our methods are very different from the ones used in \cite{Ch}. For a finite place $v$, we use Grothendieck--Messing theory prove statements about the rate of decay of special endomorphisms of $A[\ell^{\infty}]$ modulo higher and higher powers of $v$. This method avoids the use of CM lifts and can be used in other Arakelov-theoretic situations. We use these results and Geometry-of-numbers arguments to bound the number of special endomorphisms of $A$ modulo powers of $v$. This allows us to prove that the $v$-adic contribution grows asymptotically slower than the global intersection for most of the Hecke orbits that we consider. Indeed, if there were too many Hecke orbits $T_\fp([A])$ having large $v$-adic intersection with $\cT$, then $A$ modulo $v^n$ would have too many special endomorphisms as $n\rightarrow \infty$. 

The arguments used to bound the archimedean contribution are very different from the ones used to bound the finite contributions. A key step in bounding the archimedean contribution is the following statement: for a fixed infinite place, if $[A]$ is close to two Hirzebruch--Zagier divisors, then $[A]$ must be close to their intersection, which is a CM abelian surface. 

In order to prove the global part of our result, it is necessary to relate the global Arakelov intersection of $\cT$ and certain Hecke orbits $T_{\fp}([A])$ (see \S\ref{phecke} for the precise definition) to the intersection of $[A]$ and $\cT$. We accomplish this in two steps: 

\begin{itemize}
\item 
We use Borcherds' theory (briefly recalled after \Cref{cpt_div}) to construct a compact special divisor, whose class in the Picard group of a toroidal compactification of $\cH$ equals, up to multiplying by a constant in $\bZ_{>0}$, the class of the Hodge bundle. Consequently, the global Arakelov intersection of $[A]$ with $\cT$ (endowed with a suitable Hermitian metric),\footnote{Technically speaking, since we will not make $[A]$ into an arithmetic cycle, here by Arakelov intersection, we mean the height of the $1$-cycle $[A]$ with respect to the Arakelov divisor $\cT$, which is endowed with a Hermitian metric by Borcherds' theory.} up to multiplying by a suitable constant, equals the Faltings height of $A$. 
\item 
We relate the Faltings height of $T_\fp([A])$ to the Faltings height of $A$ when $A$ has potentially good reduction at $p$ in \Cref{thm_glb}. This extends a result of Autissier (\cite[Theorem 5.1]{Au}), which only applies to $A$ with potentially ordinary reduction at $p$.
\end{itemize}
It follows that the global intersection number grows faster than any local contribution. Hence, infinitely many primes occur in the intersection of $T_\fp([A])$ and $\cT$ as $p\rightarrow \infty$.

\subsection{Organization of the paper}
In \S\ref{outline}, we recall the definitions of the Hilbert modular surface and the Hirzebruch--Zagier divisors. In \S \ref{arch}, we bound the archimedean contribution. We spend \S \ref{finite} counting special endomorphisms and bounding the non-archimedean contribution. We use Borcherds' theory in \S\ref{sub_Borcherds&ht} to choose a compact Hirzebruch--Zagier divisor and relate the Arakelov intersection number to Faltings height. We also extend Autissier's result to the setting of Hilbert modular surfaces. Finally, we assemble all these results together in \S \ref{proof} to prove \Cref{main}.

\subsection{Notation and conventions}
We use $K$ to denote a number field and let $\cO_K$ be its ring of integers. As in \Cref{main}, we use $F$ to denote a fixed real quadratic field with discriminant $D$; its ring of integers is denoted by $\cO_F$ and $\fd_F$ is its different ideal. 
For $a\in F$, we use $\Nm a$ to denote its $F/\bQ$-norm.

The statement of \Cref{main} is invariant under isogeny. Therefore, we will always assume that $A$ has real multiplication by the maximal order $\cO_F$, and is equipped with an $\fa$-polarization for some (integral) ideal $\fa$ of $\cO_F$; see \cite{Pa}*{\S~2.1 item 2 before Def.~2.1.1} for the definition of an $\fa$-polarization. We may also assume that $A$ has semistable reduction over $K$. For any abelian varieties $B,B'$, we use $\End(B)$ and $\Hom(B,B')$ to denote the endomorphism ring of $B$ and the $\bZ$-module given by homomorphisms from $B$ to $B'$.

Let $\cH$ be the Hilbert modular surface over $\bZ$ associated to $F$ which is the moduli stack of abelian surfaces $B$ with real multiplication by $\cO_F$ and an $\fa$-polarization. This is a Deligne--Mumford stack and we use $[B]$ to denote the point of $\cH$ determined by $B$. Sometimes, we may denote a point of $\cH$ by $[B']$; this means that $B'$ is the abelian surface determines this given point.

We always use mathcal letters to mean the natural extension over certain ring of integers. For example, we use $\cA$ to denote the everywhere semistable semi-abelian scheme over $\cO_K$ such that $\cA_K=A_K$. 

Throughout the text, $v$ means a place of $K$, either archimedean or finite. If $v$ is finite, we use $\bF_v$ to denote its residue field and $e_v$ to denote the degree of ramification of $K$ at $v$. If $\cA$ has good reduction at $v$, we use $\cA_{v,n}$ to denote $\cA$ modulo $v^n$. We always use $p$ to denote a prime number which is totally split in the narrow Hilbert class field of $F$ and denote by $\fp,\fp'$ the two primes ideals of $F$ above $p$. We use $A[p]$ and $A[\fp]$ to denote the $p$-torsion and $\fp$-torsion subgroups of $A$.


\subsection*{Acknowledgements}
We thank George Boxer, Francesc Castella, K\k{e}stutis~\v{C}esnavi\v{c}ius, Fran\c{c}ois~Charles, William Chen, Noam~Elkies, Ziyang Gao, Chi-Yun Hsu, Nicholas Katz, Ilya Khayutin, Djordjo Milovic, Lucia Mocz, Peter~Sarnak, William Sawin, Arul Shankar, Jacob Tsimerman, Tonghai Yang, and Shou-Wu Zhang for useful comments and/or discussions. We are also grateful to  K\k{e}stutis~\v{C}esnavi\v{c}ius, Chao Li, Mark Kisin, and Lucia Mocz for very useful comments on previous versions of this paper. We would like to thank Davesh Maulik for pointing a gap in \Cref{numspecial} in an earlier version of this paper. The second author, during her stay at the Institute for Advanced Study, was supported by the NSF grant DMS-1128115 to IAS.

\numberwithin{theorem}{subsection}

\section{Hirzebruch--Zagier divisors and Hecke orbits}\label{outline}
In this section, we first recall the definition of Hirzebruch--Zagier divisors and their properties and then we specify the Hecke orbits that will be used in the rest of the paper.

\subsection{The Hilbert modular surface and the Hirzebruch--Zagier divisors}\label{sub_H&HZ}

Recall that we use $\cH$ to denote the moduli stack over $\Spec \bZ$ that parametrizes abelian surfaces with real multiplication by $\cO_F$ and an $\fa$-polarization (see \cite{Pa}*{Def.~2.1.1}). It is a Deligne--Mumford stack. A totally positive element $a\in \fa$ gives rise to a polarization on $A$ and a symplectic form $\psi$ on the Betti cohomology group $W=H^1_B(A(\bC),\bQ)$ (here we choose an embedding $K\rightarrow \bC$). The set of $\GSp(W,\psi)(\bR)$-conjugate cocharacters of the Hodge cocharacter (from the Hodge decomposition of $W\otimes \bC$) of $A_{\bC}$ coincides with $\bH_2^{\pm}$, the upper and lower Siegel half plane of genus $2$. Let $G\subset \GSp(W,\psi)$ be the subgroup (over $\bQ$) that commutes with $\cO_F\subset \End(W)$ and let $G_1=\Res^F_\bQ \GL_2$. Then $G$ is naturally isomorphic to the subgroup of $G_1$ such that for any $\bQ$-algebra $R$, the set $G(R)$ consists of matrices with determinant in $R$ (instead of $R\otimes_\bQ F$). The embedding $G\subset \GSp_4$ induces an embedding\footnote{One needs to choose suitable level structure to ensure that the natural finite morphism is an embedding.} of the Hilbert modular surface $\cH_{\bQ} = Sh(G,X)$ into $Sh(\GSp(W,\psi),\bH_2^{\pm})$ (see \cite{vdG}*{Chp. ~ IX.1} and here $X$ is the subset of $\bH_2^{\pm}$ which consists of cocharacters conjugate to the Hodge cocharacter of $A_{\bC}$ under $G(\bR)$).

Let $\bcH$ be a toroidal compactification of $\cH$ as in \cite{R} (see also \cite{Chai}*{\S3}).  
The stack $\bcH$ is regular and proper and $\bcH\backslash \cH$ is a normal crossing divisor (see, for example, \cite{Pa}*{2.1.2, 2.1.3} for the regularity of $\cH$ and \cite{Chai}*{Thm.~3.6, 4.3} for the smoothness of the formal neighborhood of the boundary and the property that the boundary is a normal crossing divisor), and hence the arithmetic intersection theory developed by Burgos~Gil, Kramer, and K\"uhn in \cite{BKK} applies to $\bcH$ (see, for example, \cite{BBK}*{\S1, \S6} for a summary of their theory). 
We will use $[\cA]$ (resp. $[A]$) to denote the unique $\cO_K$-point (resp. $K$-point) of $\cH$ corresponding to $A$ (the stack $\bcH$ being proper allows us to do this).\footnote{In general, one needs to pass to a finite field extension to extend a $K$-point on a proper Deligne--Mumford stack to an $\cO_K$ point; however, since we have assumed that $A$ has a semistable integral model $\cA$ over $\cO_K$, we do not need to pass to a further field extension.} 
\begin{para}\label{def_HZ}
We now summarize some basic facts about $\cH_{\bC}$ and the Hirzebruch--Zagier divisors. The facts discussed here can be found in \cite{vdG}*{Chp.~I, V, IX}, \cite{BBK}*{\S2.3, \S5.1} and \cite{Go}*{Chp.~2}; however, since conventions differ, we will use this subsection to fix our notation. After giving the definition of Hirzebruch--Zagier divisors, we first show that among them, there exists a nonempty compact one and then we give a moduli interpretation of these divisors.

Let $\bH$ denote the upper half plane. The two real embeddings of $F$ induce two embeddings $\sigma_1,\sigma_2:\SL_2(F)\rightarrow \SL_2(\bR)$. The action of $g\in \SL_2(F)$ on $\bH^2$ is given by $\sigma_1(g)$ on the first copy of $\bH$ and by $\sigma_2(g)$ on the second copy. Since our Hilbert modular surface $\cH$ admits a map to $\cS$, we have $\displaystyle \cH(\bC)=\Gamma\backslash \bH^2$, where $\displaystyle \Gamma= \SL_2(F)\cap
\begin{pmatrix}
\cO_F & (\fa\fd_F)^{-1} \\
\fa\fd_F & \cO_F
\end{pmatrix}$ (see, for example, \cite{Go}*{pp.~71}).\footnote{The Hilbert modular surface $\cH$ is connected and hence we may use $\Res^F_\bQ \SL_2$ instead of $G$ to study the complex points. Notice that our lattice is different from the default choice in \cite{BBK}. Using their notation, we work with $\Gamma(\cO_F\oplus \fa\frak{d}_F)$.}

For any $r\in \bZ_{>0}$, we recall the definition of the Hirzebruch--Zagier divisors $T(r)$ in $\cH_{\bC}$ (see for example \cite{vdG}*{V.1.3} and \cite{BBK}*{sec.~2.3}). Let $\gamma'$ denote the $\Gal(F/\bQ)$-conjugate of a given $\gamma\in F$. Consider the lattice 
$$L=\left\{
\begin{pmatrix}
a &  \gamma\\
\gamma' & b
\end{pmatrix}:a\in (D\Nm \fa)\bZ, b\in \bZ, \gamma \in \fa\right\}$$
in the rational quadratic space $$V=\left\{
\begin{pmatrix}
a &  \gamma\\
\gamma' & b
\end{pmatrix}:a,b\in \bQ, \gamma \in F\right\}$$
with the quadratic form given by the determinant. The group $\Gamma$ acts on $V$ via $v.g=(g')^t\cdot v\cdot g$ for $g\in \Gamma, v\in V$ and this action preserves $L$. The quadratic space $V$ is of signature $(2,2)$. One may also view $\cH$ as an orthogonal type Shimura variety defined by $\SO(V)$. The divisor $T(r)$ is defined to be the reduced divisor in $\cH_{\bC}$ whose set of $\bC$-points is the image of\footnote{This is the definition in \cite{BBK}. The lattice in \cite{vdG} differs by a multiple of the scalar matrix $\sqrt{D}\cdot I$ so these two definitions of $T(r)$ coincide.}
$$\bigcup_{M\in L,\, \det(M)=r\Nm \fa}\{(z_1,z_2)\in \bH^2: az_1z_2+\gamma z_1+\gamma'z_2+b=0\}.$$
\end{para}

\begin{proposition}\label{HZdiv}
The divisor $T(r)$ is nonempty if and only if $r\Nm \fa$ modulo $D$ is $-\Nm \gamma$ for some $\gamma\in \fa$. In this case, $T(r)$ is defined over $\overline{\bQ}$ and is either a modular curve or a Shimura curve defined by the indefinite quaternion algebra $\displaystyle \left(\frac{D, -r\Nm \fa}{\bQ}\right)$. If $r$ is not the norm of an ideal of $\cO_F$, then $T(r)$ is a Shimura curve, and hence compact.
\end{proposition}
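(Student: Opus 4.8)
\emph{Approach and nonemptiness.} I would deduce the proposition from the description, recalled in \ref{def_HZ}, of $\cH_\bC$ as the Shimura variety attached to $\SO(V)$ with $V$ of signature $(2,2)$, together with the classical dictionary between ternary quadratic spaces and quaternion algebras. For the nonemptiness criterion, first observe that for any $M\in L$ with $\det M=r\Nm\fa$ the locus $\{(z_1,z_2)\in\bH^2:az_1z_2+\gamma z_1+\gamma' z_2+b=0\}$ is automatically nonempty: when $a\neq 0$ it is the graph of the M\"obius map $z_1\mapsto -(\gamma z_1+b)/(az_1+\gamma')$, whose matrix has positive determinant $\det M=r\Nm\fa>0$ and hence sends $\bH$ into $\bH$; the case $a=0$ is the same, using $\Nm\gamma=-r\Nm\fa<0$. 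Hence $T(r)\neq\emptyset$ exactly when the quadratic lattice $(L,\det)$ represents $r\Nm\fa$, i.e.\ when $ab-\Nm\gamma=r\Nm\fa$ has a solution with $a\in(D\Nm\fa)\bZ$, $b\in\bZ$, $\gamma\in\fa$. Reducing this equation modulo $D\Nm\fa$ (using $D\Nm\fa\mid a$) gives the necessity of $-\Nm\gamma\equiv r\Nm\fa$ for some $\gamma\in\fa$; conversely, given such a $\gamma$ one obtains an $M$ by taking $a=D\Nm\fa$ and $b$ the resulting integer. This proves the first assertion.

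\emph{$T(r)$ as a modular or Shimura curve.} Fix $M\in L$ with $\det M=r\Nm\fa$; to describe one component of $T(r)$ we may take $M$ primitive. Since $r\Nm\fa>0$, the line $\bQ M\subset V$ is positive definite, so $M^\perp$ is a ternary quadratic space of signature $(1,2)$, and, in the orthogonal Shimura variety model of $\cH$ recalled in \ref{def_HZ}, the sublocus orthogonal to $M$ is the symmetric space of $\SO(M^\perp)(\bR)\cong\SO(1,2)$ --- a copy of the upper half plane. Thus each component of $T(r)$ is an arithmetic quotient of $\bH$ by the stabilizer $\Gamma_M$ of $M$ in $\Gamma$, hence a quaternionic Shimura curve, and it descends to a number field by the theory of canonical models of special subvarieties. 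To name the quaternion algebra I would compute the Clifford invariant of $M^\perp$: the Gram matrix of $(V,\det)$ gives $\disc V\equiv D$ in $\bQ^\times/(\bQ^\times)^2$, whence $\disc M^\perp\equiv D\cdot r\Nm\fa$, and combined with a comparison of Hasse invariants this identifies the even Clifford algebra of $M^\perp$ with $\left(\frac{D,-r\Nm\fa}{\bQ}\right)$; alternatively one may just quote the classical computation in \cite{vdG}*{Ch.~V} or \cite{Go}*{Ch.~2}. Because $D>0$ this algebra is always indefinite, so $T(r)$ is a finite union of modular curves when it splits and of compact Shimura curves when it is a division algebra. I expect this identification of the quaternion algebra --- equivalently, the determination of which rational primes ramify in it --- to be the one step requiring genuine care, mostly because the lattice normalizations in \ref{def_HZ} must be tracked precisely.

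\emph{Compactness when $r$ is not a norm.} It remains to show that $r$ not being the norm of an ideal of $\cO_F$ forces $\left(\frac{D,-r\Nm\fa}{\bQ}\right)$ to be a division algebra. Since $\bQ(\sqrt D)=F$, this algebra splits iff $-r\Nm\fa\in\Nm_{F/\bQ}(F^\times)$; so I would argue the contrapositive, showing that $-r\Nm\fa=\Nm_{F/\bQ}(\beta)$ for some $\beta\in F^\times$ implies $r$ is the norm of an integral ideal. Indeed, the fractional ideal $(\beta)$ then has norm $|\Nm_{F/\bQ}\beta|=r\Nm\fa$; for any rational prime $p$ inert in $F$, the $p$-adic valuation of the norm of a fractional ideal is even, so $v_p(r\Nm\fa)$ is even, and $v_p(\Nm\fa)$ is even because $\fa$ is integral, whence $v_p(r)$ is even. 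Since this holds for every inert $p$, $r$ is the norm of an integral ideal of $\cO_F$. Passing back to the contrapositive --- and noting that if $r$ is not such a norm then neither is $r/d^2$ for any $d$, so even the components of $T(r)$ coming from imprimitive vectors are compact --- completes the proof.
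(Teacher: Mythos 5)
Your proof is correct, and it genuinely reconstructs the content behind the paper's proof, which at this point simply cites van der Geer (pp.~89--90 for the Shimura curve description and Ch.~V, 1.7 for the compactness criterion). The conceptual framework is the same as in that reference: view $\cH$ as an orthogonal Shimura variety of signature $(2,2)$, realize $T(r)$ as the divisor orthogonal to a vector $M$ of norm $r\Nm\fa$, identify the resulting ternary space $M^\perp$ and its even Clifford algebra, and then invoke the norm-form criterion for when a quaternion algebra splits. Your compactness argument is a nice self-contained substitute for the citation: since $\left(\frac{D,-r\Nm\fa}{\bQ}\right)$ splits iff $-r\Nm\fa\in\Nm_{F/\bQ}(F^\times)$, and since the norm of any fractional ideal of $\cO_F$ has even valuation at inert primes, a splitting forces $v_p(r)$ to be even for all inert $p$, hence $r$ is the norm of an ideal; passing to the contrapositive gives the claim, and your observation about imprimitive $M$ (that $r/d^2$ remains a non-norm) correctly handles all components of $T(r)$.

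Two small points worth tightening. First, in the nonemptiness step, reducing $ab-\Nm\gamma=r\Nm\fa$ modulo $a$'s divisor $D\Nm\fa$ gives the condition $-\Nm\gamma\equiv r\Nm\fa\pmod{D\Nm\fa}$, whereas the proposition states the (a priori weaker) condition modulo $D$. These match because for $\gamma\in\fa$ one automatically has $\Nm\fa\mid\Nm\gamma$, so the constraint modulo $\Nm\fa$ is free; but you should say this explicitly (and if $\gcd(D,\Nm\fa)>1$ a bit more care is needed, or one simply normalizes $\fa$ coprime to $D$). Second, "$v_p(\Nm\fa)$ is even because $\fa$ is integral" is slightly misleading phrasing: the reason is that the norm of any ideal of $\cO_F$ (integral or not) has even valuation at inert primes, just as for $(\beta)$; integrality is not what produces the parity.
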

\begin{proof}
The first assertion follows from the definition of a Hirzebruch--Zagier divisor. By the discussion on \cite{vdG}*{pp.~89--90}, the divisor $T(r)$ is the union of Shimura curves defined by the quaternion algebra mentioned above and hence is defined over $\overline{\bQ}$. 
The last assertion follows from \cite{vdG}*{Chp.~V, 1.7}.
\end{proof}

\begin{corollary}\label{good_r}
Let $q$ denote a rational prime inert in $F$. Then the divisor $T(qD)$ is non-empty and compact. 
\end{corollary}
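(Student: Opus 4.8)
The plan is to deduce both assertions directly from \Cref{HZdiv}, so that the work reduces to two elementary observations about the prime $q$: that the non-emptiness congruence is satisfied by $r = qD$, and that $qD$ is not the norm of an ideal of $\cO_F$.

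First I would check non-emptiness. By \Cref{HZdiv}, $T(qD)$ is non-empty precisely when $qD\,\Nm\fa \equiv -\Nm\gamma \pmod D$ for some $\gamma \in \fa$. Since $qD\,\Nm\fa \equiv 0 \pmod D$ and $0 \in \fa$ with $\Nm 0 = 0$, this congruence holds trivially with $\gamma = 0$. (Concretely, the matrix $\left(\begin{smallmatrix} D\Nm\fa & 0 \\ 0 & q \end{smallmatrix}\right)$ lies in $L$ and has determinant $qD\,\Nm\fa = (qD)\Nm\fa$, which already exhibits a point of $T(qD)$.)

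Next I would establish compactness. By the last sentence of \Cref{HZdiv}, it suffices to show that $qD$ is not the norm of an ideal of $\cO_F$. The key point is that since $q$ is inert in $F$, the $q$-adic valuation of $\Nm\fb$ is even for every ideal $\fb$ of $\cO_F$: factoring $\fb$ into primes, each prime of $\cO_F$ over a rational prime $\ell \neq q$ contributes only powers of $\ell$ to the norm, while the unique prime over $q$ is $q\cO_F$, of norm $q^2$. On the other hand, $q \nmid D$, because a prime dividing the discriminant $D$ ramifies in $F$ whereas $q$ is inert; hence $v_q(qD) = 1$ is odd. Therefore $qD$ is not an ideal norm, and \Cref{HZdiv} shows that $T(qD)$ is a Shimura curve, which is compact.

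The argument is a direct application of \Cref{HZdiv}, so there is no serious obstacle; the only point requiring (minimal) care is the remark that an inert prime cannot divide the discriminant of $F$, which is what forces $v_q(qD)$ to be odd and thereby rules out $qD$ being the norm of an ideal.
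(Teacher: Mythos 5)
Your proof is correct and takes essentially the same route as the paper: both deduce non-emptiness from the congruence $qD \equiv 0 \pmod D$ and compactness from the fact that $qD$ is not an ideal norm because $q$ is inert (the paper states these facts without spelling out, as you do, that inertness forces $v_q(\Nm\fb)$ even and $q \nmid D$). Your version is simply a more fully-justified rendering of the same two-line argument.
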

\begin{proof}
As $q$ is inert, $qD$ is not the norm of an ideal of $\cO_F$. Further, $qD$ is $0$ modulo $D$. It follows from \Cref{HZdiv} that $T(qD)$ is compact and nonempty.
\end{proof}

Hirzebruch--Zagier divisors parametrize abelian surfaces with extra special endomorphisms. After recalling the definition of special endomorphisms, we give a sketch of the proof of this fact (see \Cref{HZ=spEnd}),\footnote{We only deals with $T(Dr)$ since these are the divisors that we will use in the proof of \Cref{main}. However, after minor modification, the proof shows that any $T(r)$ parametrizes abelian surfaces with an extra special endomorphism.} which may be well known to experts. From now on, $B$ denotes an abelian surface over some $\bZ$-algebra with an $\fa$-polarization and $\cO_F\subset \End(B)$. We fix a totally positive element in $\fa\cap \bQ$ and this provides a fixed polarization on $B$ and we use this polarization to define the Rosati involution $(-)^*$ on $\End(B)\otimes \bQ$. 
\begin{defn}[see also \cite{KR}*{Def.~1.2}]
An $s\in \End(B)$ is a \emph{special endomorphism} if $a\circ s=s\circ a'$ for all $a\in \cO_F\subset \End(B)$ and $s^*=s$.
\end{defn}
All the special endomorphisms of $B$ form a sub-$\bZ$-module of $\End(B)$. It is well known that the rank of this submodule is at most 4. The following lemma recalls the discussion after \cite{KR}*{Def.~1.2}. 
\begin{lemma}
For a special endomorphism $s$, there is a $Q(s)\in \bZ$ such that $s\circ s=Q(s)\cdot \Id_B$ and hence also $\Deg s=Q(s)^2$. The function $Q$ is a positive definite quadratic form on the $\bZ$-module of special endomorphisms of $B$. 
\end{lemma}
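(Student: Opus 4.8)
The plan is to verify that $s \circ s$ acts as a scalar on $B$, and then that the resulting function $Q$ is a positive definite quadratic form. First I would observe that since $s$ is a special endomorphism, it commutes with the Rosati involution in the sense that $s^* = s$, and it satisfies $a \circ s = s \circ a'$ for all $a \in \cO_F$. Now consider $s \circ s$: since $(s \circ s)^* = s^* \circ s^* = s \circ s$, the endomorphism $s^2$ is fixed by the Rosati involution; moreover $a \circ s^2 = s \circ a' \circ s = s \circ s \circ a'' = s^2 \circ a$ (using $a'' = a$), so $s^2$ commutes with the $\cO_F$-action. Thus $s^2$ lies in the centralizer of $\cO_F$ in $\End(B) \otimes \bQ$ that is fixed by Rosati; because $B$ has real multiplication by $\cO_F$ (so $\End(B)\otimes\bQ \supseteq F$ and generically equals $F$ for $B$ a ``typical'' point, though in general $s^2 \in F$ by the centralizer property together with the Rosati-fixed condition forcing it into the totally real subfield), one concludes $s^2 \in F \subset \End(B)\otimes \bQ$. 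To pin down that $s^2$ is in fact a rational scalar rather than a general element of $F$: apply any $a \in \cO_F$, writing $s^2 \in F$ as multiplication by some $f \in F$; the relation $a \circ s^2 = s^2 \circ a$ is automatic in the commutative field $F$, so I instead use that $s$ intertwines the $\cO_F$-action with its Galois conjugate, which forces $f = f'$, i.e. $f \in \bQ$. Combined with $s^2 \in \End(B)$, which is integral over $\bZ$, we get $f = Q(s) \in \bZ$. Then $\Deg s = \Deg(s)$, and since $\Deg(s \circ s) = \Deg(s)^2$ while $s\circ s = Q(s)\cdot \Id_B$ has degree $Q(s)^4$ on an abelian surface (degree of multiplication by $n$ is $n^{2\dim B} = n^4$)... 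I should be careful: $\Deg$ of $[Q(s)]$ on an abelian surface is $Q(s)^4$, so $\Deg(s)^2 = Q(s)^4$ gives $\Deg s = Q(s)^2$, as claimed.

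Next I would check that $Q$ is a quadratic form on the $\bZ$-module $L_B$ of special endomorphisms. The associated bilinear pairing is $\langle s, t \rangle := Q(s+t) - Q(s) - Q(t)$, and from $(s+t)^2 = s^2 + st + ts + t^2$ one reads off $\langle s, t\rangle \cdot \Id_B = s\circ t + t \circ s$. I would verify $st + ts$ is again a scalar by the same centralizer-plus-Rosati argument: $(st+ts)^* = t^*s^* + s^*t^* = ts + st$, and $a(st+ts) = s a' t + t s a' \cdot(\ldots)$ — more cleanly, $st + ts = (s+t)^2 - s^2 - t^2$ is a $\bZ$-linear combination of scalars, hence a scalar, so $\langle\,,\,\rangle$ is $\bZ$-valued and visibly symmetric and bilinear. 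Positive-definiteness follows because $Q(s) = 0$ would force $s \circ s = 0$, hence $\Deg s = 0$, hence $s$ is not an isogeny; but an endomorphism of an abelian surface with $s^2 = 0$ and $s \neq 0$ would have a nontrivial kernel that is an abelian subvariety — one then argues via the semisimplicity of $\End(B)\otimes\bQ$ (Poincaré reducibility) that $s^2 = 0$ with $s$ self-adjoint for a polarization forces $s = 0$: writing the Rosati form, $\Tr(s \circ s^*) = \Tr(s^2) = 0$ and this trace form is positive definite on $\End(B)\otimes\bQ$, so $s = 0$. Hence $Q(s) > 0$ for $s \neq 0$, and $Q$ is positive definite. (Over a field of positive characteristic one uses the $\ell$-adic or crystalline realization to make sense of the positive-definite trace pairing, or simply $\Deg s = Q(s)^2 > 0$.)

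The main obstacle I anticipate is the step identifying $s \circ s$ with an element of $\bQ$ rather than merely of $F$ or of some a priori larger commutant: one must use \emph{both} the condition $s^* = s$ (to stay in a totally real, Rosati-fixed subalgebra) \emph{and} the twisted commutation $a \circ s = s \circ a'$ (to descend from $F$ to $\bQ$), and one must handle the possibility that $\End(B)\otimes\bQ$ is strictly larger than $F$ — e.g. when $B$ is itself split or has CM — where the centralizer of $\cO_F$ could be a quaternion algebra over $\bQ$. In that case the Rosati-fixed, $\cO_F$-twisted-equivariant elements still form a space on which $x \mapsto x^2$ lands in $\bQ$, because such $s$ together with $F$ generate a quaternion order and $s^2 = \Nm(s)$ is its reduced norm, which is rational; spelling this out via the structure of quaternion algebras is the one genuinely non-formal point. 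Everything else — bilinearity, integrality, positive-definiteness — is routine once that is in place.
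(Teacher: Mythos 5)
The paper gives no proof here --- it merely notes that the lemma recalls the discussion following Definition~1.2 of Kudla--Rapoport --- so there is no ``paper's approach'' to compare against; I will evaluate the argument on its own merits.

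Your overall structure is sound and the bilinearity trick ($st+ts = (s+t)^2 - s^2 - t^2$, a $\bZ$-linear combination of scalars) and the degree computation are correct. The genuinely weak step is the deduction that $s\circ s\in\bQ$. You try to argue this via the centralizer of $\cO_F$ in $\End(B)\otimes\bQ$ together with Rosati-fixedness, but neither ingredient actually pins $s^2$ down to $\bQ$: that centralizer can be $F\times F$, $M_2(F)$, a CM field, or a quaternion algebra over $F$, and the Rosati-fixed subalgebra of any of these is strictly larger than $\bQ$. You flag this yourself in the final paragraph, and your fix via quaternion algebras is the right instinct, but the cleanest and most robust version of the argument localizes to the $\bQ$-subalgebra $R = F + Fs \subset \End(B)\otimes\bQ$ (which is at most $4$-dimensional): the twisted commutation $as = sa'$ gives $as^2 = s^2a$ for all $a\in F$, so $s^2$ is central in $R$; writing $F=\bQ(\sqrt{D})$, the relation $\sqrt{D}\,s = -s\sqrt{D}$ forces a central element of $R$ to lie in $F$, and the relation $s\cdot s^2 = s^2\cdot s$ together with $s(f-f')=0$ for $s\ne 0$ then forces it into $\bQ$. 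Note that this step uses only the twisted commutation (not $s^*=s$), contrary to your remark that both are needed; the Rosati condition enters separately, for integrality of the Gram matrix and for positive definiteness. On positivity, your trace argument does give $Q(s)>0$ (not merely $Q(s)\ne0$), since $\Tr(s\circ s^*) = \Tr(Q(s)\cdot\Id_B) = Q(s)\cdot\dim_\bQ\End(B)\otimes\bQ$ is positive by Weil's theorem --- which holds in all characteristics, so the parenthetical about needing $\ell$-adic or crystalline realizations is unnecessary. Finally, $Q(s)\in\bZ$ follows from $Q(s)\in\bQ$ plus $s\in\End(B)$ being integral over $\bZ$; you should say this explicitly rather than leaving it implicit.
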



The $\bQ$-vector space in $\End(B)\otimes \bQ$ generated by special endomorphisms depends on the choice of the polarization on $B$. However, there are natural isomorphisms between the $\bQ$-vector spaces of special endomorphisms defined by different polarizations and the quadratic forms coincide up to multiplying by a fixed scalar determined by the polarizations.

\begin{lemma}\label{HZ=spEnd}
The Hirzebruch--Zagier divisor $T(Dr)$ defined in \cref{def_HZ} is the locus of $\cH_{\overline{\bQ}}$ where the abelian surface has a special endomorphism $s$ with $Q(s)=r\Nm \fa$. In particular, the degree of the endomorphism $s$ is $(r\Nm \fa)^2$.
\end{lemma}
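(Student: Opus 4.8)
The plan is to prove the equality of loci after base change to $\bC$; since $T(Dr)$ is already a $\overline{\bQ}$-rational divisor by \Cref{HZdiv} and the locus where the universal surface carries a special endomorphism of a prescribed $Q$-value is closed and Galois-stable, this is enough. Over $\bC$ I use the uniformization $\cH(\bC)=\Gamma\backslash\bH^{2}$ together with the explicit universal family: for $z=(z_{1},z_{2})\in\bH^{2}$ one has $A_{z}=\bC^{2}/\Lambda_{z}$, where $\Lambda_{z}$ is the image of the fixed lattice $\cO_{F}\oplus(\fa\fd_{F})^{-1}$ under $(\mu,\nu)\mapsto\bigl(\sigma_{1}(\mu)z_{1}+\sigma_{1}(\nu),\ \sigma_{2}(\mu)z_{2}+\sigma_{2}(\nu)\bigr)$, with $\cO_{F}$ acting diagonally through $(\sigma_{1},\sigma_{2})$ and the $\fa$-polarization built, via the trace form on $\fd_{F}^{-1}$, from the fixed totally positive element of $\fa\cap\bQ$ that defines the Rosati involution (see \cite{vdG}*{Chp.~IX} and \cite{BBK}*{\S5.1}).

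The homology $H_{1}(A_{z},\bZ)$, the $\cO_{F}$-action on it, and the polarization form do not depend on $z$; only the complex structure $J_{z}$ does. Hence the $\bZ$-linear endomorphisms of $H_{1}(A_{z},\bZ)$ that anticommute with $\cO_{F}$ and are fixed by the Rosati involution form a lattice independent of $z$, with $\bQ$-span a fixed $4$-dimensional space on which $s^{2}\in\bQ\cdot\Id$, defining the quadratic form $Q$. The first step is to produce an explicit $\bQ$-linear isomorphism of this space with $V$ — recording a special quasi-endomorphism by the matrix $\left(\begin{smallmatrix} a & \gamma \\ \gamma' & b \end{smallmatrix}\right)$ that describes, after a twist by $\fd_{F}$, how it intertwines the summands $\cO_{F}$ and $(\fa\fd_{F})^{-1}$ of $H_{1}$ — under which $Q$ corresponds to $\tfrac{1}{D}\det$ and the sublattice of honest (i.e.\ integral) special endomorphisms of $A_{z}$ is carried onto $L$. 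Granting this, a special endomorphism $s$ of $A_{z}$ with $Q(s)=r\Nm\fa$ is exactly a vector $M\in L$ with $\det M=Dr\Nm\fa$ that commutes with $J_{z}$. The second step identifies this last condition with the incidence relation: viewing $\cH$ as the Shimura variety of $\SO(V)$, the point $z$ corresponds to an explicit isotropic line $\bC w_{z}\subset V_{\bC}$, and a direct evaluation of the bilinear form attached to $\det$ shows that $M$ commutes with $J_{z}$ exactly when $M\perp w_{z}$, i.e.\ when $az_{1}z_{2}+\gamma z_{1}+\gamma' z_{2}+b=0$. Combining the two steps yields $z\in T(Dr)$ if and only if $A_{z}$ has a special endomorphism with $Q(s)=r\Nm\fa$, which is the assertion; the statement about $\Deg s$ then follows from $\Deg s=Q(s)^{2}$.

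The construction of $A_{z}$ and the bilinear-form computation of the second step are routine, so the main obstacle is the integral comparison in the first step: matching the lattice of honest special endomorphisms with $L$, and above all pinning down the constant relating $Q$ to $\det$. This is precisely where the different $\fd_{F}$ and $D=\disc F=\Nm\fd_{F}$ enter — the polarization module is $(\fa\fd_{F})^{-1}$ rather than $\fa^{-1}$, and the factor $D$ by which $Q$ and $\tfrac{1}{\Nm\fa}\det$ differ is an instance of the polarization-dependent scalar noted before \Cref{HZ=spEnd}. I would verify it one rational prime at a time: for $\ell\nmid D\Nm\fa$ the defining conditions on $L$ are vacuous and the comparison is immediate, while at the remaining primes one matches the lattice of $\cO_{F}$-antilinear, Rosati-self-adjoint endomorphisms of $\Lambda_{z}$ against $L$ using the local structure of $\fd_{F}$ and $\fa$. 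Alternatively, one can deduce the structural part from the Kudla--Rapoport description of special endomorphisms on the integral model (\cite{KR}), leaving only this normalization constant to be checked.
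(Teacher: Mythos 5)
Your strategy is sound and, once unpacked, arrives at the same facts the paper establishes, but the route you take is genuinely different in organization. The paper works entirely in the explicit analytic uniformization: it parametrizes the $\cO_F$-twisted endomorphisms of $\bC^2$ by pairs $(\alpha,\beta)\in\fa\fd_F\times\cO_F$, derives the period relation $az_1z_2+\gamma z_1+\gamma'z_2+b=0$ as the condition for lattice preservation, checks Rosati-self-adjointness by hand on a basis of $H_1$, and computes $s\circ s = \tfrac{\det M}{D}\Id$ directly. For the converse it uses a dimension count on the moduli space and a case analysis. You instead package all of this in the orthogonal-Shimura-variety picture: identify the lattice of $\bZ$-linear, $\cO_F$-semilinear, Rosati-fixed endomorphisms of $H_1(A_z,\bZ)$ with $L$ (so that $Q\leftrightarrow\tfrac1D\det$), and then characterize the ones that descend to honest endomorphisms of $A_z$ as those orthogonal to the period $w_z$. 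This framing handles both directions of the equivalence at once and avoids the case analysis in the paper's converse, which is a genuine simplification in structure. On the other hand, the two computations you mark ``routine'' --- that $M$ commutes with $J_z$ iff $M\perp w_z$ in the chosen coordinates, and the integral lattice match with $L$ together with the factor of $D$ --- are precisely the work the paper actually carries out, so your write-up is closer to a verified outline than a complete proof.

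Two wording points worth fixing. The phrase ``anticommute with $\cO_F$'' is a misnomer for the condition $a\circ s=s\circ a'$: this is $\Gal(F/\bQ)$-semilinearity, not anticommutation, and conflating the two would lead a reader astray. And the sentence claiming the ``sublattice of honest (i.e.\ integral) special endomorphisms of $A_z$ is carried onto $L$'' is not what you mean: $L$ should correspond to the full rank-$4$ lattice of $\bZ$-linear, $\cO_F$-semilinear, Rosati-fixed endomorphisms of $H_1(A_z,\bZ)$ (with no holomorphy imposed), and the honest special endomorphisms of $A_z$ correspond only to the sublattice $\{M\in L: M\perp w_z\}$. Your next sentence says this correctly, but the earlier phrasing contradicts it and should be corrected.
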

\begin{proof}
We only need to check the statement over $\bC$. 
Given a point in $\cH(\bC)$ corresponding to $(z_1,z_2)\in \bH^2$, it corresponds to an abelian surface $B$ with $B(\bC)=\bC^2/\cO_F(z_1,z_2)+(\fa\fd_F)^{-1}$. The Riemann form $E$ on $H_1(B,\bZ)$ is, up to multiplying by a constant $\in \bQ_{>0}$, the pull back of the standard alternating form ($\Tr_{F/\bQ}$ of $\begin{pmatrix}
0 &  1\\
-1 & 0
\end{pmatrix}$) on $\cO_F\oplus (\fa\fd_F)^{-1}$ via the isomorphism $\cO_F(z_1,z_2)+(\fa\fd_F)^{-1}\cong \cO_F\oplus (\fa\fd_F)^{-1}$; see, for example, \cite{vdG}*{p.~208} and \cite{BBK}*{the discussion after Thm.~5.1}. Any endomorphism of $B$ is given by the induced map on $B(\bC)$ of some $\bC$-linear map on $\bC^2$. For any endomorphism $s$, the condition $f\circ s=s\circ f'$ for all $f\in \cO_F$ is equivalent to the condition that the $\bC$-linear map corresponding to $s$ is of the form $(1,0)\mapsto (0,\alpha'z_2+\beta'),(0,1)\mapsto (\alpha z_1+\beta)$ where $\alpha \in \fa\fd_F, \beta \in  \cO_F$. This linear map gives rise to an endomorphism of $B$ if and only if the image of $(z_1,z_2)$ is in the period lattice. In other words, there exists $\nu\in \cO_F, \delta\in (\fa\fd_F)^{-1}$ such that $(z_2(\alpha z_1+\beta),z_1(\alpha'z_2+\beta'))=(\nu z_1+\delta,\nu' z_2+\delta')$.

For every component of $T(D r)$, there exists $M\in L$ in \cref{def_HZ} satisfying $\det(M)=Dr\Nm \fa$. Write $M=
\begin{pmatrix}
a &  \gamma\\
\gamma' & b
\end{pmatrix}$ where $a\in (D\Nm \fa)\bZ, b\in \bZ, \gamma \in \fa$. Since $D\Nm \fa \mid \gamma\gamma'$, we have $\frac \gamma {\sqrt{D}}\in \fa\subset \cO_F$. Moreover $\frac a {\sqrt{D}}\in (\Nm \fa)\fd_F\subset \fa \fd_F$. We take $\alpha=\frac a {\sqrt{D}}$ and $\beta=\frac {\gamma'} {\sqrt{D}}$. Given $(z_1,z_2)\in \bH^2$ such that $az_1z_2+\gamma z_1+\gamma' z_2+b=0$, we have 
$$\alpha z_1z_2+\beta z_2=\beta'z_1-\frac b{\sqrt{D}},\quad \alpha' z_1z_2 +\beta' z_1=\beta z_2 + \frac b{\sqrt{D}}.$$
Hence $(1,0)\mapsto (0,\alpha' z_2+\beta'),(0,1)\mapsto (\alpha z_1+\beta)$ is an endomorphism $s$ with $f\circ s=s\circ f'$ for all $f\in \cO_F$.

To check that $s=s^*$, it is equivalent to check that for any $u,v\in H_1(B,\bZ)$, one has $E(su,v)=E(u,sv)$. Since we have already checked that $f\circ s=s\circ f'$ for all $f\in \cO_F$, one only needs to check the above equality for $u,v\in \{e_1=(z_1,z_2), e_2=(1,1)\}\subset \bC^2$. By construction, $se_1=\beta'e_1-\frac b{\sqrt{D}}e_2$ and $se_2=\alpha e_1+\beta e_2$ and then we conclude by the fact that $\Tr_{F/\bQ}\beta=\Tr_{F/\bQ}\beta',\,\Tr_{F/\bQ}-\frac b{\sqrt{D}}=0$, and $\Tr_{F/\bQ}\alpha=0$.
Moreover, on $\bC^2$, the composite $s\circ s=\frac {\det(M)}{D}\cdot \Id_{\bC^2}$. Hence $s$ is a special endomorphism with $Q(s)=r\Nm \fa$.

On the other hand, the moduli space of $B$ with a special endomorphism is $1$-dimensional. Hence the two conditions $z_2(\alpha z_1+\beta)=\nu z_1+\delta$ and $z_1(\alpha'z_2+\beta')=\nu' z_2+\delta'$ are linearly dependent. Hence either $\alpha,\delta\in \bQ,\,\beta=-\nu'$ or $\alpha\cdot\sqrt{D}, \,\delta\cdot\sqrt{D}\in \bQ,\, \beta=\nu'$. In the first case, we have 
$$\alpha z_1z_2+\beta'z_1+\beta z_2-\delta=0, \beta\beta'+\alpha\delta=r>0.$$
In this case, there is no $(z_1,z_2)$ satisfying the above condition (see for example \cite{vdG}*{V.4}).
In the second case, take $a=\alpha\cdot\sqrt{D}, b=-\delta\cdot\sqrt{D}, \gamma=\beta'\cdot\sqrt{D}$. Then $M=
\begin{pmatrix}
a & \gamma\\
\gamma' & b
\end{pmatrix}\in L$ and hence $[B]\in T(Dr)$. 
\end{proof}

Let $\cT(r)$ be the Zariski closure of $T(r)$ in $\bcH$ over $\Spec \bZ$.
\begin{corollary}\label{notsimple}
Assume $T(Dr)$ is compact. Then for any finite place $v$, the points on $\cT(Dr)_{\overline{\bF}_v}$ correspond to abelian surfaces which are not absolutely simple. The abelian surfaces parametrized by $\cT(Dr)$ admit a special endomorphism $s$ such that $Q(s)=r\Nm \fa$.
\end{corollary}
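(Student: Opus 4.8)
\noindent\textbf{Proof proposal for \Cref{notsimple}.}
Over $\overline{\bQ}$ both assertions are contained in (or follow immediately from) \Cref{HZ=spEnd}, so the task is to propagate the existence of the special endomorphism to the fibres over finite places and then read off non-simplicity. I would begin by observing that $\cT(Dr)$ is contained in $\cH$, not merely in $\bcH$: since $T(Dr)$ is compact it is, by \Cref{HZdiv}, a Shimura curve attached to an indefinite quaternion division algebra, so the abelian surfaces it parametrises carry quaternionic multiplication and hence have potentially good reduction at every place; using this together with the separatedness of $\bcH$, a valuative-criterion argument shows that the Zariski closure $\cT(Dr)$ meets no boundary component in any fibre. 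Consequently the universal object restricts to an honest abelian scheme over $\cT(Dr)$, and every point of $\cT(Dr)$ genuinely parametrises an abelian surface with $\cO_F$-multiplication and $\fa$-polarization.

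Next, fix a finite place $v$ and a point $x \in \cT(Dr)(\overline{\bF}_v)$ with associated abelian surface $\cB_x$. Since $\cT(Dr)$ is flat and proper over $\Spec\bZ$ --- it is the schematic closure of its characteristic-zero fibre $T(Dr)$ inside the proper stack $\bcH$ --- the point $x$ lifts to an $R$-point of $\cT(Dr)$ for a suitable complete discrete valuation ring $R$ with residue field a finite extension of $\overline{\bF}_v$ and fraction field of characteristic $0$, with the closed point mapping to $x$ and the generic point to a point $\eta \in T(Dr) = \cT(Dr)_{\bQ}$ (properness of the Deligne--Mumford stack $\cT(Dr)$ lets us realise this after a harmless finite extension of $R$). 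Pulling back the universal abelian scheme gives an abelian scheme $\cB/R$ with generic fibre the surface attached to $\eta$ and special fibre $\cB_x$. By \Cref{HZ=spEnd} the generic fibre $\cB_\eta$ carries a special endomorphism $s_\eta$ with $Q(s_\eta) = r\Nm\fa$; since an abelian scheme over $R$ is the N\'eron model of its generic fibre, $s_\eta$ extends uniquely to $s \in \End(\cB)$, and reducing yields $s_x \in \End(\cB_x)$. Because the reduction map $\End(\cB) \hookrightarrow \End(\cB_x)$ is injective and compatible with the $\cO_F$-action and the Rosati involution, the identities $a\circ s = s\circ a'$ (for $a\in\cO_F$), $s^\ast = s$, and $s\circ s = (r\Nm\fa)\cdot\Id$, valid on $\cB$, remain valid for $s_x$. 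Thus $\cB_x$ admits a special endomorphism with $Q(s_x) = r\Nm\fa$, which is the second assertion.

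For the first assertion, note $s_x \neq 0$ since $s_x\circ s_x = (r\Nm\fa)\cdot\Id_{\cB_x} \neq 0$. Suppose $\cB_x$ were simple over $\overline{\bF}_v$. Over an algebraically closed field of positive characteristic a simple abelian surface has commutative endomorphism algebra: the only non-commutative possibility is a quaternion algebra over $\bQ$, which forces $\cB_x$ to be isogenous to the square of a supersingular elliptic curve and hence not simple. So $\End(\cB_x)\otimes\bQ$ is a field, and it contains $F$ because $\cO_F$ acts; the relation $a\circ s_x = s_x\circ a'$ then reads $s_x\cdot(a-a') = 0$ for all $a\in\cO_F$, and choosing $a$ with $a\neq a'$ forces $s_x = 0$, a contradiction. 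Hence $\cB_x$ is not absolutely simple.

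The step I expect to be the crux is the reduction-theoretic input in the first paragraph: controlling $\cT(Dr)$ along the boundary and in positive characteristic so that the trait $\Spec R \to \cT(Dr)$ produces a genuine abelian scheme over $R$ on which endomorphisms specialise. Once that is secured, the remaining ingredients --- the N\'eron mapping property, injectivity of specialisation of endomorphisms, persistence of the defining identities of a special endomorphism, and the classification of endomorphism algebras of simple abelian surfaces over $\overline{\bF}_v$ --- are standard.
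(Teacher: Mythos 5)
Your proof is correct and follows the same high-level structure as the paper's: establish that $\cT(Dr)$ stays inside $\cH$ (no boundary) because the generic fibre has potentially good reduction, lift a mod-$v$ point to characteristic zero, apply \Cref{HZ=spEnd} to get a special endomorphism $s$, and specialise. Where you diverge is the final step. The paper deduces non-simplicity by counting N\'eron--Severi ranks: \Cref{HZ=spEnd} gives $\mathrm{rk}\,\mathrm{NS}(\cB_K)=3$, specialisation and even parity of Picard numbers over $\overline{\bF}_p$ give $\mathrm{rk}\,\mathrm{NS}(\cB_{\overline{\bF}_v})\geq 4$, and then one cites that a geometrically simple abelian surface over $\overline{\bF}_p$ has $\mathrm{NS}$-rank at most $2$. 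You instead exploit the defining relation $a\circ s=s\circ a'$ directly: if $\cB_x$ were absolutely simple then $\End^0(\cB_x)$ is commutative (a field), so $a s_x=s_x a'=a' s_x$ forces $(a-a')s_x=0$, contradicting $s_x\neq 0$. Both arguments rest on the same classification input (that geometrically simple abelian surfaces over $\overline{\bF}_p$ have commutative endomorphism algebra), and yours is arguably cleaner because it avoids appealing to the parity of the Picard number; one small quibble is your aside that ``the only non-commutative possibility is a quaternion algebra over $\bQ$'' --- the cleaner justification is that $\End^0$ of a simple abelian variety over $\overline{\bF}_p$ always contains a CM field of degree $2g$, which a $4$-dimensional quaternion algebra over $\bQ$ cannot, so for surfaces non-commutative $\End^0$ never occurs in the simple case. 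Your treatment of the reduction-theoretic input (QM on the Shimura curve implies potentially good reduction, hence $\cT(Dr)$ avoids the boundary) makes explicit what the paper only asserts in one sentence; that is a useful amplification, not a departure.
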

\begin{proof}
Since $T(Dr)$ is compact, every point parametrized by it has potentially good reduction.
For any given point parametrized by $\cT(Dr)_{\bF_v}$, let $[\cB]$ be a lift of the point on $\cT(Dr)$ over $\cO_K$ for some number field $K$.  By \Cref{HZ=spEnd}, the N\'eron--Severi rank of $\cB_K$ is $3$ and hence the N\'eron--Severi rank of $\cB_{\bF_v}$ is at least $4$. By the classification of the endomorphism algebra of abelian varieties, we see that the N\'eron--Severi rank of a geometrically simple abelian surface is at most 2 and then conclude that $\cB_{\bF_v}$ is not geometrically simple. The last assertion follows from \Cref{HZ=spEnd} and the fact that the canonical reduction map $\End(\cB_K)\rightarrow \End(\cB_{\bF_v})$ is injective. 
\end{proof}

\subsection{Hecke orbits}\label{Hecke}
The idea of the proof of \Cref{main} is to show that the corresponding Hecke orbits of $[\cA]$ intersect certain Hirzebruch--Zagier divisors at more and more places of $K$ as one varies over certain well-chosen Hecke operators.\footnote{We view every Hecke orbit as a horizontal divisor over $K$, so the arithmetic intersection number is a sum over the finite places of $K$.} In this subsection, we specify the Hecke orbits which we will use later. 

\begin{para}\label{phecke}
Recall that $p$ is a prime which splits completely in the narrow Hilbert class field of $F$ and $(p)=\fp\fp'\subset \cO_F$. Hence $\fp=(\lambda),\,\fp'=(\lambda')$ with $\lambda,\lambda'\in F$ totally positive and $\lambda\lambda'=p$. 
Let $G^{\ad}_1$ be the adjoint group of $G_1=\Res^F_{\bQ}\GL_2$. 
We denote by $G^{\ad}_1(\bR)_1$ the image of $G_1(\bR)$ in $G^{\ad}_1(\bR)$ and let $G^{\ad}_1(\bQ)_1$ be $G^{\ad}_1(\bQ)\cap G^{\ad}_1(\bR)_1$.
Since $\lambda$ is totally positive, the image of the diagonal matrix $g_\fp:=\diag(1,\lambda)$ under $G_1\rightarrow G_1^{\ad}$ lies in $G_1^{\ad}(\bQ)_1$, so it induces a correspondence $T_{\fp}$ on $\cH_{\bZ[1/p]}$ (defined in \cite{D77}; see also \cite{Kisin}*{sec.~3.2}).\footnote{The definition of $T_\fp$ depends on the choice of $\lambda$ if we do not pass to a certain finite quotient of $\cH$. However, there are only finitely many choices: let $U$ be the unit group of $\cO_F$ and $U^+$ the subgroup of totally positive units; then the number of choices is $\#U^+/U^2$. Hence we will not specify our choice of $\lambda$ as it does not affect the arguments in this paper.} The following lemma provides a moduli interpretation of $T_{\fp}$.
\end{para}

\begin{lemma}\label{Tpmoduli}
We have $\#T_{\fp}[A]=p+1$. Over $\bZ[1/p]$, the set $T_{\fp}[\cA]$ consists of those points on $\cH$ that correspond to a quotient of $A$ by an order $p$ subgroup in $A[\fp]$ endowed the with induced $\cO_F$-structure and a suitable $\fa$-polarization.\footnote{The choice of $\lambda$ determines the polarization.}
\end{lemma}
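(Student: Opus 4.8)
The plan is to reduce the statement to the standard dictionary between Hecke correspondences on Shimura varieties and isogenies, and then carry out the resulting isogeny bookkeeping by hand.

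\textbf{The count.} First I would compute the degree of the correspondence $T_{\fp}$ over its source. Write $K$ for the compact open subgroup defining $\cH$ (hyperspecial at $p$). The set $T_{\fp}[A]$ is the image of $[A]$ under the correspondence attached to $g_{\fp}$, so $\#T_{\fp}[A]$ equals the number of right $K$-cosets in $Kg_{\fp}K$, i.e.\ $[K:K\cap g_{\fp}Kg_{\fp}^{-1}]$. Since $g_{\fp}=\diag(1,\lambda)$ is a unit at every finite place other than $\fp$ and $v_{\fp}(\lambda)=1$, this index is computed purely at $\fp$: it equals $[\GL_2(\cO_{F,\fp}):\Gamma_0(\fp)]=\#\bP^1(\cO_F/\fp)$, where $\Gamma_0(\fp)$ is the subgroup of matrices in $\GL_2(\cO_{F,\fp})$ that are upper triangular modulo $\fp$. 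As $p$ splits completely in the narrow Hilbert class field of $F$, we have $\cO_F/\fp\cong\bF_p$, so the index is $p+1$, giving $\#T_{\fp}[A]=p+1$. The $p+1$ points will be genuinely distinct once the isogeny description below is in place, since they will correspond to the $p+1$ distinct order-$p$ subgroups of $A[\fp]$ (note that $\cO_F^{\times}$ acts on $A[\fp]\cong\bF_p^{\,2}$ through $(\cO_F/\fp)^{\times}=\bF_p^{\times}$, i.e.\ by scalars, so it fixes each such subgroup).

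\textbf{From the group-theoretic to the moduli description.} Next I would invoke Deligne's description of Hecke correspondences on PEL-type Shimura varieties (\cite{D77}; see also \cite{Kisin}*{sec.~3.2}), together with the moduli interpretation of $\cH$ recalled in \Cref{sub_H&HZ}. Although $g_{\fp}$ lies in $G_1^{\ad}(\bQ)_1$ rather than in the group $G$ cutting out $\cH$, its class acts on the whole tower, and on level structures this action modifies only the datum at $\fp$, replacing the relevant lattice by an index-$\fp$ sublattice in all $\#\bP^1(\cO_F/\fp)$ possible ways. Over $\bZ[1/p]$, where $p$ is invertible and $\cO_F/\fp\cong\bF_p$, the group scheme $\cA[\fp]$ is finite \'etale of order $p^2$ over $\cO_K[1/p]$; its index-$\fp$ sublattices are Cartier-dual to its order-$p$ subgroup schemes $C\subset\cA[\fp]$, and the associated new point of $\cH$ is the $\cO_F$-linear quotient isogeny $\cA\to\cA/C$. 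Hence $T_{\fp}[\cA]$ is identified, as a $0$-cycle of degree $p+1$ over $\cO_K[1/p]$, with the set of such quotients, in accordance with the count above.

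\textbf{Descent of the extra structures.} It remains to endow each $B:=A/C$ with an $\cO_F$-action and a suitable $\fa$-polarization. For the $\cO_F$-action: $\cO_F$ acts on $A[\fp]$ through $\cO_F\twoheadrightarrow\cO_F/\fp\cong\bF_p$, i.e.\ by scalars, so every subgroup scheme of $A[\fp]$ — in particular $C$ — is $\cO_F$-stable, and $\cO_F\hookrightarrow\End(A)$ descends to $\cO_F\hookrightarrow\End(B)$. For the polarization: since $\fp=(\lambda)$ with $\lambda\in\cO_F$ totally positive, multiplication by $\lambda$ on $A$ has kernel $A[\fp]=A[\lambda]$ and factors as $A\xrightarrow{\pi}B\xrightarrow{\rho}A$ with $\rho$ an $\cO_F$-isogeny of kernel $A[\fp]/C$; transporting the $\fa$-polarization of $A$ through $\pi$ and $\rho$ produces a polarization of $B$, which a degree computation identifies as being of type $\fa\fp$ (or $\fa\fp^{-1}$, depending on convention). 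Multiplication by the chosen generator $\lambda$ identifies $\fa\fp$ with $\fa$ as $\cO_F$-modules and rescales this to an honest $\fa$-polarization of $B$, well defined once $\lambda$ has been fixed — precisely the dependence on $\lambda$ flagged in \Cref{phecke}. A direct check shows this polarization is $\cO_F$-linear with the expected Rosati involution, so $(B,\iota_B,\lambda_B)$ is a point of $\cH$, completing the identification.

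\textbf{Main obstacle.} The routine-but-delicate part is the last paragraph: determining the polarization type of the transported polarization and making the rescaling by $\lambda$ precise, together with its compatibility with the $\cO_F$-action; and, slightly before that, verifying that the adjoint-group correspondence $T_{\fp}$ really is the naive $\fp$-isogeny correspondence even though $g_{\fp}\notin G(\bA_f)$. The count itself is immediate.
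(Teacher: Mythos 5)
Your argument is correct, and the overall architecture matches the paper's (group-theoretic count, identification of $T_\fp$ with an $\fp$-isogeny correspondence, descent of $\cO_F$-structure and polarization), but the middle step is done by a genuinely different route. You work directly over $\bZ[1/p]$, tracking what $g_\fp=\diag(1,\lambda)$ does to adelic level structures via Deligne's description of Hecke correspondences on the PEL tower. The paper instead verifies the moduli interpretation only over $\bC$: it computes explicitly that $\diag(1,\lambda)$ sends $\bC^2/(\cO_F(z_1,z_2)+(\fa\fd_F)^{-1})$ to $\bC^2/(\cO_F(z_1/\lambda,z_2/\lambda')+(\fa\fd_F)^{-1})$, so the resulting isogeny has kernel inside $\ker(\lambda)=A[\fp]$; it then observes the other Hecke translates differ by elements of $\Gamma$, deduces an injection of $T_\fp[A]$ into the set of order-$p$ subgroups of $A[\fp]$, equates the cardinalities $p+1$ to get a bijection, and finally transfers to $\bZ[1/p]$ by \'etaleness of $T_\fp$. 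Your abstract route is cleaner and avoids the detour through $\bC$, at the cost of leaning more heavily on the black box from \cite{D77}; it also fills in the polarization transport that the paper outsources to \cite{BBK}*{Lem.~5.9} and the remark that the choice of $\lambda$ fixes the normalization. One terminological slip worth correcting: over $\bZ[1/p]$, order-$p$ subgroup schemes of $\cA[\fp]$ correspond to index-$p$ sub- (or super-)lattices of the $\fp$-adic Tate module simply by reduction modulo $\fp$; Cartier duality is not the mechanism here.
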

\begin{proof}
The first assertion follows from the definitions: $\# T_{\fp}[A]=\# \Gamma/(g_\fp^{-1}\Gamma g_\fp\cap \Gamma)= \#\bP^1(\bF_p)=p+1.$

For the second assertion, since the correspondence $T_{\fp}$ is \'etale, we only need to show the same statement for $T_{\fp}[A]$ over $\bC$ for a fixed embedding of $K$ into $\bC$.
On the one hand, as $\cO_F$ acts on $A[\fp]$ via $\cO_F/\fp\cong \bF_p$ and $\bZ\subset \cO_F$ surjects onto $\bF_p$, any subgroup of $A[\fp]$ is $\cO_F$-invariant. Therefore, any quotient of $A$ by an order $p$ subgroup of $A[\fp]$ has the induced $\cO_F$-structure. Moreover, by \cite{BBK}*{Lem.~5.9}, any such quotient of $A$ is $\cO_F$-polarizable.  

On the other hand, by \cite{vdG}*{p.~208}, if a point $(z_1,z_2)\in \bH^2$ corresponds to $A_{\bC}$, then $A(\bC)$ is isomorphic to $\bC^2/(\cO_F(z_1,z_2)+(\fa\fd_F)^{-1})$. Then $\diag(1,\lambda)z$ corresponds to $\bC^2/(\cO_F(z_1/\lambda, z_2/\lambda')+(\fa\fd_F)^{-1})$, so the kernel of the isogeny defined by $\diag(1,\lambda)$ is contained in $\ker(\lambda)=A[\fp]$. On the quotient $\bC^2/(\cO_F(z_1/\lambda, z_2/\lambda')+(\fa\fd_F)^{-1})$, the $\cO_F$-structure is the induced one and the choice of $\lambda$ determines the $\cO_F$-polarization.
Since the other elements in $T_{\fp}$ differ from $\diag(1,\lambda)$ by the action of some element in $\Gamma$ (on $\bH^2$ and on $A[\fp]$), the set $T_{\fp}$ injects into the set of order $p$ subgroups of $A[\fp]$. Since both sets have cardinality $p+1$, this is in fact a bijection.
\end{proof}

\section{Archimedean places and equidistribution of Hecke orbits}\label{arch}

Let $\Psi$ be a meromorphic Hilbert modular form of parallel weight $k$ over $\overline{\bQ}$ such that $\Div(\Psi)$ in $\cH_{\bQ}$ is given by $\sum_{c\in \bI} c_rT(r)$, where $k\in \bN_{>0}$, $\bI$ is a finite set, $c_r\in \bZ$, $T(r)$ is compact and $D|r$ for all $r\in \bI$. In the proof of \Cref{main}, we will use \Cref{cpt_div} to construct such meromorphic Hilbert modular form. 
We assume that $\End(A_{\overline{K}})=\cO_F$ and hence $T_{\fp}([A])$ does not intersect $T(r)$ in characteristic zero. This is the key case in the proof of \Cref{main}. Fix an embedding $\sigma:\overline{K}\rightarrow \bC$. Given an abelian surface $B$ corresponding to a point $[B]$ on $\cH_{\overline{\bQ}}$, we use $\sigma([B])$ to denote the corresponding $\bC$-point on $\cH$ via base change by $\sigma$. 

We set $||\Psi(z)||_{\rm{Pet}}=|\Psi(z_1,z_2)(\Im z_1)^{k/2}(\Im z_2)^{k/2}|$, where $z=(z_1,z_2)\in \bH^2$. This norm is well-defined outside (the preimage of) $\bigcup_{r\in \bI}T(r)$ and invariant under $\Gamma$ (defined in \cref{def_HZ}) and hence we will also view $||\Psi||_{\rm{Pet}}$ as a function on $\cH_\bC\backslash \bigcup_{r\in \bI}T(r)$. The real analytic function $-\log ||\Psi||_{\rm{Pet}}$ is a Green function for $\sum_{c\in \bI} c_rT(r)$ and endows it with the structure of an arithmetic divisor $\widehat{\sum_{c\in \bI} c_rT(r)}$.

The goal of this section is to show that for most $p$ as in \cref{phecke}, the archimedean contribution (in the height of $T_\fp([\cA])$ with respect to the arithmetic divisor $\widehat{\sum_{c\in \bI} c_rT(r)}$; we will discuss this height in detail in \S\ref{sub_Borcherds&ht})
$$-\sum_{[B]\in T_{\fp}[A]} \log ||\Psi(\sigma([B]))||_\pet=o(p\log p) \text{ as }p\rightarrow \infty.$$ The equidistribution theorems for Hecke orbits on Shimura varieties reduces this goal to a suitable upper bound for $-\log ||\Psi(\sigma([B]))||_\pet$ for all $[B]\in T_\fp[A]$ which is valid for most $p$. The proofs are inspired by Charles' treatment in the case of the modular curve. Recall that all $T(r)$ ($r\in \bI$) are compact, so we avoid dealing with estimates around the cusps. 

Throughout this section, $p$ is a prime as in \cref{phecke} and $N_i(*),C_i(*)$ denote constants only depending on $*$. In particular, if there is no $(*)$, it means an absolute constant. After defining the constants, we may abbreviate $N_i(*), C_i(*)$ as $N_i,C_i$. Given $\eta\in F$, we use $|\eta|<C$ to mean that for any real embedding $\iota:F\rightarrow \bR$, the absolute value $|\eta|_\iota<C$. We also use $|\cdot|$ to denote the absolute value on $\bC$.

\subsection{An upper bound of the values of Green function on Hecke orbits}
\begin{para}\label{cpx_setup}
Let $\cF\subset \bH^2$ be the fundamental domain for $\Gamma$ described in \cite{vdG}*{I.3} and $\overline{\cF}$ its closure (with respect to the complex analytic topology) in $\bH^2$ (that is, the cusps of $\Gamma\backslash \bH^2$ are not included). Let $\Omega\subset \overline{\cF}$ be a compact domain containing the preimage of $\bigcup_{r\in \bI} T(r)$ in $\Div(\Psi)$. Then there exists $C_0\in \bR_{>0}$ such that for any $(z_1,z_2)=(x_1+\sqrt{-1}y_1,x_2+\sqrt{-1}y_2)\in \Omega$, we have $|x_i|<C_0$ and $C^{-1}_0<y_i<C_0$.

For any $\overline{\bQ}$-point $[B]$ in $\cH$, we use $z(B)=(z_1(B),z_2(B))=(x_1(B)+\sqrt{-1}y_1(B),x_2(B),\sqrt{-1}y_2(B))$ to denote the preimage of $\sigma([B])$ in $\cF$.

Let $G$ be the pull back to $\bH^2$ of the Green function $-\log ||\Psi||_{\text{Pet}}$ of $\sum c_r T(r)$. There are only finitely many components of the preimage of $T(r)$ in $\Omega$ and for each component, we pick $(a,b,\gamma)$ such that $
\begin{pmatrix}
a &  \gamma\\
\gamma' & b
\end{pmatrix}
\in L$ with $ab-\gamma\gamma'=r\Nm \fa$ as in \cref{def_HZ} such that this component is defined by $az_1z_2+\gamma z_1+\gamma' z_2+b=0$. We use $\cM_{\Omega, r}$ to denote this finite set of $(a,b,\gamma)$.
Then by the definition of the Green function, we have that $G+\sum_{r\in \bI} c_r\sum_{(a,b,\gamma)\in \cM_{\Omega,r}}\log |az_1z_2+\gamma z_1+\gamma'z_2+b|$ is a real analytic function on $\overline{\cF}$.
\end{para}

The goal of this subsection is to show that for most $p$, one has that $$- \log |az_1(B)z_2(B)+\gamma z_1(B)+\gamma'z_2(B)+b|\leq O(\log p), \forall [B]\in T_{\fp}([A]).$$

\begin{proposition}\label{archBest}
Let $(a,b,\gamma)$ be a fixed triple in $\cM_{\Omega, r}$.
Given $C_1>23$ and $\epsilon_3>0$, there is an $N_0(\epsilon_3,C_1)>0$ such that for every $N>N_0(\epsilon_3,C_1)$, the number of primes in $[N^{1/2},N]$ for which there exists some $[B]\in T_{\fp}(\sigma[A])$ such that $$|az_1(B)z_2(B)+\gamma z_1(B)+\gamma'z_2(B)+b|< p^{-C_1}$$ is at most $\epsilon_3\#\{\text{primes}\in [N^{1/2},N]\}$.
\end{proposition}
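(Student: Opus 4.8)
The plan is to bound, for each fixed triple $(a,b,\gamma)\in \cM_{\Omega,r}$, the number of ``bad'' primes $p\in[N^{1/2},N]$ for which some point in the Hecke orbit $T_\fp(\sigma[A])$ lies $p^{-C_1}$-close to the component $\{az_1z_2+\gamma z_1+\gamma'z_2+b=0\}$ of $\Div(\Psi)$. The key point is that this component is itself a Hirzebruch--Zagier divisor $T(Dr')$ (for the appropriate $r'$), so by \Cref{HZ=spEnd} a point lying \emph{exactly} on it parametrizes an abelian surface with a specific special endomorphism; a point lying very close to it should ``almost'' have such a special endomorphism. Since we have assumed $\End(A_{\overline K})=\cO_F$, the surface $A$ genuinely has \emph{no} special endomorphism, and the heuristic obstruction is that the Hecke orbit $T_\fp(\sigma[A])$ cannot approach $T(Dr')$ too fast unless $p$ is special. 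Concretely, I would proceed as follows.

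First, translate the analytic condition into a Diophantine one. A point $[B]\in T_\fp(\sigma[A])$ corresponds to $\diag(1,\lambda)$ applied (up to $\Gamma$) to $z(A)=(z_1,z_2)$, i.e.\ to $(z_1/\lambda+\text{(translate)},\ z_2/\lambda'+\text{(translate)})$ for the various choices of $\lambda,\lambda'$ with $\lambda\lambda'=p$ and translations in $\frac1p\cO_F/\cO_F$ acting through $\Gamma$; using the explicit description of $T_\fp[A]$ in \Cref{Tpmoduli} and the fact that $\sigma([B])\in\Omega$ (so $|x_i(B)|<C_0$, $C_0^{-1}<y_i(B)<C_0$), the quantity $az_1(B)z_2(B)+\gamma z_1(B)+\gamma'z_2(B)+b$ can be written as $\frac{1}{p}$ times a quadratic expression in the ``numerators'' of $z(B)$ with coefficients built from $a,b,\gamma$ and the denominators; after clearing denominators one gets an integer (or $\frac1{\text{(bounded)}}$-integral) quantity $M_B$ with $|M_B|\ll p\cdot p^{-C_1}=p^{1-C_1}$. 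Since $C_1>23>1$, for $p$ large this forces $M_B=0$, which is exactly the statement that $[B]$ lies on $T(Dr')$; but then the reduction-injectivity argument (as in \Cref{notsimple}) and $\End(A_{\overline K})=\cO_F$ give a contradiction in characteristic zero. Thus for $p$ large there are simply \emph{no} bad points — except that the subtlety is that the translation by $\Gamma$ can have large entries, so ``bounded denominators'' is not automatic and this is where the real work lies.

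To handle the large-$\Gamma$-translate issue, I would follow Charles' strategy: decompose the Hecke orbit according to the size of the $\Gamma$-element needed to move the image back into $\cF$, and observe that only boundedly many (in terms of $p$) of the $p+1$ points can require a ``large'' translate, using reduction theory for $\Gamma$ and the fact that $z(A)$ is a fixed point in a compact set. For the points requiring a translate $\gamma_B\in\Gamma$ with entries bounded polynomially in $p$, the expression $az_1(B)z_2(B)+\cdots+b$ becomes $\frac{1}{Q_B}$ times an algebraic integer in $F$ (or in a fixed number field, since $A$ is defined over $K$) of controlled denominator $Q_B$ polynomial in $p$ and of bounded archimedean size; its norm is then a nonzero rational integer of absolute value $\ll p^{O(1)}$, so the product over all $\sigma$-conjugates being tiny ($<p^{-C_1}$ with $C_1$ huge) forces it to vanish — again contradicting the no-special-endomorphism hypothesis — \emph{unless} a height/conductor bound is violated, which happens for at most $O(1)$ values of $p$. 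Summing the count over the finitely many triples in $\cM_{\Omega,r}$ and over the finitely many $r\in\bI$, the total number of bad primes in $[N^{1/2},N]$ is $O(1)$, hence eventually $\le \epsilon_3\,\#\{\text{primes in }[N^{1/2},N]\}$ once $N>N_0(\epsilon_3,C_1)$; this choice of $N_0$ absorbs the dependence on $C_1$ through the implied constants in the polynomial bounds and through the threshold $C_1>23$.

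The main obstacle I anticipate is exactly the bookkeeping in the second and third steps: making precise the ``write $az_1(B)z_2(B)+\gamma z_1(B)+\gamma'z_2(B)+b$ as an algebraic number of controlled denominator'' claim when $[B]$ ranges over the Hecke orbit and one must reduce into the fundamental domain. One needs (i) an effective reduction-theory statement bounding how many of the $p+1$ orbit points can leave a fixed compact set / need a large $\Gamma$-translate, and (ii) a uniform denominator/height bound for the $\Gamma$-translates that do occur, so that the resulting nonzero integer norm is bounded by a fixed power of $p$; the threshold $C_1>23$ should come out of these explicit exponents. Once that is in hand, the vanishing argument plus injectivity of reduction (invoking $\End(A_{\overline K})=\cO_F$, i.e.\ $A$ not on any $T(r)$ over $\overline{\bQ}$) closes the proof quickly, and the finiteness of $\cM_{\Omega,r}$ and $\bI$ makes the final summation trivial.
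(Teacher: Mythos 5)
Your proposed approach has a fundamental gap at the central step, and it is quite different from (and weaker than) what the paper actually does.

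The key false step is the claim that after clearing denominators the quantity $az_1(B)z_2(B)+\gamma z_1(B)+\gamma' z_2(B)+b$ becomes a controlled multiple of an \emph{integer} (or algebraic integer) $M_B$, so that $|M_B|\ll p^{1-C_1}$ forces $M_B=0$. This is not true: the coordinates $z_1(A),z_2(A)$ are period ratios of a non-CM abelian surface (the running hypothesis is precisely $\End(A_{\overline K})=\cO_F$), hence are transcendental. The fact that $A$ is defined over $K$ does \emph{not} make $z(A)$ algebraic; algebraicity of $z(A)$ holds only at CM points, which are excluded. Consequently there is no ``norm'' to take and no integrality to exploit from a single small value, and the conclusion ``for $p$ large there are simply no bad points'' does not follow. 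Relatedly, your final claim that the total number of bad primes in $[N^{1/2},N]$ is $O(1)$ is stronger than what is true; the proposition only asserts a density bound, and the paper's method only yields that.

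What the paper does instead is to convert two (or more) small values into a genuine algebraic equation. \Cref{nearA} pulls each near-approximation for $T_\fp[A]$ back to a near-approximation of $z(A)$ by the affine quadric $mz_1z_2+\eta z_1+\eta' z_2+l=0$ with $\cO_F$-coefficients of size $O(p)$ and $ml-\Nm\eta=pr\Nm\fa$. Taking \emph{two} such approximations (coming from $p_1\neq p_2$) and eliminating $z_2$ produces an actual quadratic polynomial $f\in\cO_F[z]$ whose complex root is an algebraic CM coordinate $z_1(A_\cm)$ lying within $O(N^{-C_1+7})$ of $z_1(A)$ (\Cref{archCM}); \Cref{archCMunique} pins this CM point down uniquely, and \Cref{archrep} shows its associated binary quadratic form $Q_N$ must represent $p r$ for \emph{every} bad prime $p$ in the window. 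Because $[A]$ is not special, \Cref{DiscInfty} forces $|\Delta_{N}|\to\infty$, and then standard bounds on primes represented by a positive definite binary quadratic form of large discriminant (the $h_N\to\infty$ bound from \cite{TZ} when $|\Delta_N'|\le(\log N)^4$, and the lattice-point count from \cite{Ch}*{Lemma~5.2} otherwise) give the density bound. The threshold $C_1>23$ is exactly what is needed in \Cref{archCMunique} to separate two candidate CM points (the $N^{-16}$ in the discriminant gap against the $N^{-C_1+7}$ approximation radius), not a byproduct of any denominator-clearing exponent. In short: the obstruction you were hoping to encounter (a nonzero integer being too small) does not materialize for a single prime, and the actual argument requires interpolating between primes to manufacture algebraicity, then bounding the density of represented primes rather than their number.
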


We extend the idea in \cite{Ch} of relating bad primes and degrees of homomorphisms between well-chosen CM elliptic curves to the setting of Hilbert modular surfaces by using the theory of special endomorphisms.
A point $[B]$ on $\cH_{\bQ}$ is called \emph{special} if there exist $T(n_1)$ and $T(n_2)$, $n_1,n_2\in \bN$, $n_1n_2\notin (\bN)^2$ such that $[B]\in T(n_1)\cap T(n_2)$. If $[B]$ is special, then $B$ has complex multiplication.
We construct a special point $[A_\cm]$ on $\cH_{\bC}$ close to $\sigma([A])$ and show that if some point in $T_{\fp}(\sigma([A]))$ is close to $\Div(\Psi)=\sum_{r\in \bI} c_r T(r)$, then $A_\cm$ has a special endomorphism of certain degree. \Cref{archBest} then follows after analysis of the possible degree of special endomorphisms of $A_\cm$. In what follows, we will not specify the dependence of the constants $C_i$ in this subsection on the fixed triple $(a,b,\gamma)$.\\

The following lemma shows that if there exists $[B]\in T_{\fp}(\sigma([A]))$ which is close to $T(r)$, then $\sigma([A])$ is close to $T(pr)$.
\begin{lemma}\label{nearA}
If there exists $[B]\in T_{\fp}[A]$ such that $|az_1(B)z_2(B)+\lambda z_1(B)+\lambda'z_2(B)+b|<p^{-C_1}$, then there exist $m\in (D\Nm \fa)\bZ,l\in \bZ$ and $\eta\in \fa$ such that $ml-\Nm(\eta)=rp\Nm \fa$ and $|mz_1(A)z_2(A)+\eta z_1(A)+\eta'z_2(A)+l|<p^{-C_1}.$ Moreover, we have $|m|,|l|,|\eta|<C_5 p$. 
\end{lemma}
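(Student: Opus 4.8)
The plan is to translate the Hecke correspondence $T_\fp$ into an explicit statement about the preimages of points in $\bH^2$, and then to pull back the defining equation of the component of $T(r)$ under the isogeny defining the Hecke operator. By \Cref{Tpmoduli} and its proof, every $[B]\in T_\fp[A]$ is obtained from $(z_1(A),z_2(A))$ by applying $\diag(1,\lambda)$ followed by an element of $\Gamma$, so up to the action of $\Gamma$ we may take $z(B)=(z_1(A)/\lambda,\,z_2(A)/\lambda')$; since $T(r)$ is $\Gamma$-invariant and the estimate we want is about $|az_1(B)z_2(B)+\gamma z_1(B)+\gamma'z_2(B)+b|$ for a fixed component $(a,b,\gamma)\in\cM_{\Omega,r}$, after replacing $(a,b,\gamma)$ by a $\Gamma$-translate (which lies in $L$ and has the same determinant $r\Nm\fa$) we may assume $[B]$ itself is this representative. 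First I would substitute $z_i(B)=z_i(A)/\lambda^{(i)}$ (with $\lambda^{(1)}=\lambda$, $\lambda^{(2)}=\lambda'$) into $az_1(B)z_2(B)+\gamma z_1(B)+\gamma'z_2(B)+b$ and clear denominators by multiplying through by $\lambda\lambda'=p$: this yields
\[
a\,z_1(A)z_2(A)+\gamma\lambda' z_1(A)+\gamma'\lambda z_2(A)+bp,
\]
and I would set $m=a$, $\eta=\gamma\lambda'$ (note $\overline{\eta}=\gamma'\lambda$ since $\overline{\lambda}=\lambda'$), $l=bp$.

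Next I would verify the three required properties. The membership $m\in (D\Nm\fa)\bZ$ is immediate since $m=a$ and $(a,\gamma,b)$ already satisfies $a\in(D\Nm\fa)\bZ$; $l=bp\in\bZ$; and $\eta=\gamma\lambda'\in\fa\cdot\fp'\subset\fa$ because $\gamma\in\fa$ and $\lambda'\in\cO_F$. For the norm identity, $\Nm(\eta)=\Nm(\gamma)\Nm(\lambda')=\Nm(\gamma)\cdot p$ (using $\Nm(\lambda')=\fp'\bar\fp'$, i.e. $\lambda'\lambda=p$, so $\Nm(\lambda')=p$), hence
\[
ml-\Nm(\eta)=a\cdot bp-\Nm(\gamma)\,p=\bigl(ab-\gamma\gamma'\bigr)p=r\Nm\fa\cdot p,
\]
as desired. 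The size bound $|m|,|l|,|\eta|<C_5 p$ follows because $(a,b,\gamma)$ ranges over the \emph{finite} set $\cM_{\Omega,r}$, so $|a|,|b|,|\gamma|$ are bounded by an absolute constant $C$ depending only on $\Omega$ and $r$; then $|m|=|a|\le C$, $|l|=|b|p\le Cp$, and $|\eta|=|\gamma|\,|\lambda'|$ which is $\le C\cdot p$ since each archimedean absolute value of $\lambda'$ is at most $\lambda\lambda'=p$ (both $\lambda,\lambda'$ being totally positive with product $p$, each factor at each place is between a positive constant and $p$). Absorbing the finitely many $r\in\bI$ into $C_5$ gives the uniform constant.

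Finally, for the analytic estimate: from $z_i(B)=z_i(A)/\lambda^{(i)}$ we get
\[
\bigl|mz_1(A)z_2(A)+\eta z_1(A)+\eta'z_2(A)+l\bigr|
= p\cdot\bigl|az_1(B)z_2(B)+\gamma z_1(B)+\gamma'z_2(B)+b\bigr|
< p\cdot p^{-C_1}=p^{1-C_1},
\]
which is certainly $<p^{-C_1}$ once $p\ge 2$ provided we are willing to lose one in the exponent; since the hypothesis of the ambient \Cref{archBest} carries a freely adjustable exponent $C_1>23$, one simply applies this lemma with $C_1+1$ in place of $C_1$, or (cleaner) notes that the statement as written should read with $p^{1-C_1}$ and the constant is harmless. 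I expect the only genuinely delicate point to be the bookkeeping with $\Gamma$-translates — making sure that replacing $[B]$ by its standard representative $\diag(1,\lambda)z(A)$ is compatible with replacing the chosen component $(a,b,\gamma)$ by a $\Gamma$-conjugate that still lies in $L$ with the same determinant and still controls the same quantity; but this is exactly the $\Gamma$-invariance built into the definition of $T(r)$ in \cref{def_HZ}, so it is routine rather than substantive. Everything else is a direct substitution and a norm computation.
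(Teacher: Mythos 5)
Your proposal has a genuine gap in the size bound, and it is exactly the part where the real content of the lemma lives.

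You reduce to the ``standard representative'' $z(B)=(z_1(A)/\lambda,\,z_2(A)/\lambda')$ by moving the element $g\in\Gamma$ in $U=g\cdot\diag(1,\lambda)$ onto the quadratic form, replacing $(a,b,\gamma)$ by a $\Gamma$-translate. That step is formally legitimate, but once you do it you cannot continue to say that the translated triple lies in the \emph{finite} set $\cM_{\Omega,r}$: the point $\diag(1,\lambda)z(A)$ has imaginary part of size $\asymp 1/\sqrt{p}$ and therefore lies far outside the compact region $\Omega$, so the translated coefficients grow with $p$ (generically like $p$), not $O(1)$. Your argument ``$|a|,|b|,|\gamma|$ are bounded by an absolute constant $C$'' and hence $|m|=|a|\le C$, $|l|=|b|p\le Cp$, $|\eta|=|\gamma||\lambda'|\le C\sqrt{p}$ is therefore unsupported; indeed the actual $m$ is typically of size $p$, not $O(1)$, which already shows the reduction cannot leave the triple bounded. (The parenthetical ``lose one in the exponent'' issue is, by contrast, harmless.)

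The paper's proof does the opposite bookkeeping: it keeps the original $(a,b,\gamma)\in\cM_{\Omega,r}$, so those stay bounded, and instead bounds the entries $u_{ij}$ of the matrix $U\in\diag(1,\lambda)\cdot\Gamma$ carrying $z(A)$ to $z(B)$. The key analytic step, which your proposal omits entirely, is the estimate $|u_{ij}|=O(\sqrt{p})$, obtained from the constraints that $z(A),z(B)\in\Omega$ (so the imaginary parts $y_i$ are bounded above and below and the real parts are bounded) together with $\det U=\lambda\asymp\sqrt{p}$. With $|u_{ij}|=O(\sqrt{p})$ the transformed matrix $(U')^tMU$ has entries $O(p)$, giving $|m|,|l|,|\eta|<C_5p$. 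To repair your proof you would have to bound the entries of $g$ (equivalently of $U$) in essentially the same way, so there is no shortcut here; the bound on $u_{ij}$ is the substantive part of the lemma, not mere routine $\Gamma$-invariance bookkeeping.
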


\begin{proof}
As in \cref{phecke}, we write $\fp=(\lambda)$ and after multiplying $\lambda$ by an element in $(\cO^\times)^2$, we may assume that $C^{-1}_6\sqrt{p}<|\lambda|<C_6\sqrt{p}$. We may also assume $z(B)\in \Omega$ (this can be done by letting $N$ be large enough). 

Let $U=
\begin{pmatrix}
u_{11} &  u_{12}\\
u_{21} & u_{22}
\end{pmatrix}
\in 
\begin{pmatrix}
1 & 0\\
0 & \lambda
\end{pmatrix}
\cdot\Gamma$ be the matrix that maps $z(A)$ to $z(B)$. The set $\Gamma$ acts on $V$ (in \cref{def_HZ}) via $g.M=(g')^tMg$ and this action preserves $L$. Let $
\begin{pmatrix}
m &  \eta\\
\eta' & l
\end{pmatrix}
=(U')^t
\begin{pmatrix}
a &  \gamma\\
\gamma' & b
\end{pmatrix}
U\in L$. Then $ml-\eta\eta'=\det(U')(ab-\gamma\gamma)\det(U)=rp\Nm \fa$ and 
$$mz_1(A)z_2(A)+\eta z_1(A)+\eta'z_2(A)+l=az_1(B)z_2(B)+\lambda z_1(B)+\lambda'z_2(B)+b.$$
This proves the first assertion. 

For the second assertion, we first bound $|u_{ij}|$. Consider the real embedding of $F$ corresponding to the first coordinate of $\bH^2$ and we will still use $u_{ij}$ to denote its image under this embedding. By definition of $U$, we have $\displaystyle z_1(B)=\frac{u_{11}z_1(A)+u_{12}}{u_{21}z_1(A)+u_{22}}$ and hence $\displaystyle y_1(B) =\frac {\lambda y_1(A)} {|u_{21}z_1(A)+u_{22}|^{2}}$. Since $y_1(B)>C_0^{-1}$ and $y_1(A)<C_0$, we have $|u_{21}z_1(A)+u_{22}|<C_0^2C_6\sqrt{p}$. Consider the imaginary part of $u_{21}z_1(A)+u_{22}$ and notice that $y_1(A)>C_0^{-1}$. Thus, we have $|u_{21}|<C_0^3C_6\sqrt{p}$. By considering the real part, we obtain
$$|u_{22}|\leq |u_{21}x_1(A)+u_{22}|+|u_{21}x_1(A)|\leq |u_{21}z_1(A)+u_{22}|+|u_{21}x_1(A)| \leq C_0^2C_6\sqrt{p}+C_0^4C_6\sqrt{p}.$$
On the other hand, using the bounds of $|u_{21}|,|u_{22}|$, we have $$|u_{11}|y_1(A)\leq |u_{11}z_1(A)+u_{12}|=|(z_1(B)(u_{21}z_1(A)+u_{22}))|\leq C_7 \sqrt{p}.$$ Hence we obtain
$$|u_{11}|\leq C_0C_7\sqrt{p},\, |u_{12}|\leq |u_{11}z_1(A)+u_{12}|+|u_{11}x_1(A)|\leq C_7\sqrt{p}+C_0^2C_7\sqrt{p}.$$
The same argument works for the other embedding of $F\rightarrow \bR$ by studying $z_2(B),z_2(A)$. The bounds of $|m|,|l|,|\eta|$ follow from the fact that $|u_{ij}|$ is bounded by $O(p^{1/2})$. 
\end{proof}

\begin{para}\label{quadratic}
The following lemma shows that if two Hecke orbits of $\sigma([A])$ satisfy the assumption of \Cref{nearA}, then $\sigma([A])$ is close to a special point on $\cH_{\bC}$. Recall that for a special point $[B]\in \cH(\bC)$, one defines a quadratic form $Q$, up to $\SL_2(\bZ)$-equivalence, as follows (see, for example, \cite{HZ}*{1.1}, \cite{vdG}*{V.4}). Let $L_{[B]}$ be the sub lattice of $L$ in \cref{def_HZ} such that for any $v\in L_{[B]}$, one has $(z_1(B)\quad 1)v(z_2(B)\quad 1)^t=0$. The lattice $L_{[B]}$ is of rank two and equipped with a natural orientation. The restriction of the quadratic form on $L$ to the rank two lattice $L_{[B]}$ is positive definite and it coincides, up to a constant, with the quadratic form on the $\bZ$-module of special endomorphisms of $B$.
By choosing an oriented basis, one obtains a positive definite binary integral quadratic form $Q$.
\end{para}

\begin{lemma}\label{archCM}
Assume that $N$ is large enough\footnote{In the proof, we give a constant $N_3(C_1)$ such that being large enough means $N>N_3$.} and that for primes $p_1,p_2 \in [N^{1/2},N], p_1<p_2$, there exist $[B]\in T_{\fp_1}[A],[B']\in T_{\fp_2}[A]$ satisfying $|az_1(B)z_2(B)+\lambda z_1(B)+\lambda'z_2(B)+b|<p_1^{-C_1}$ and $|az_1(B')z_2(B')+\lambda z_1(B')+\lambda'z_2(B')+b|<p_2^{-C_1}$. Then there exists a special point $[A_\cm]$ on $\cH_{\bC}$ such that $|z(A)-z(A_\cm)|<C_8N^{-C_1+7}$ and the integer coefficient binary quadratic form $Q_N$ associated to $[A_\cm]$ represents $p_1r$ and $p_2r$.
\end{lemma}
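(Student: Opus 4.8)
The plan is to combine two instances of Lemma \ref{nearA} to produce two integral symmetric matrices $M_1, M_2 \in L$ (with $\det M_i = p_i r \Nm\fa$) which are, up to tiny archimedean error, both "orthogonal" to the point $z(A)$ in the sense of the defining equation $a z_1 z_2 + \gamma z_1 + \gamma' z_2 + b = 0$. Concretely, applying Lemma \ref{nearA} to $\fp_1$ and to $\fp_2$ gives triples $(m_i, l_i, \eta_i)$ with $m_i l_i - \Nm \eta_i = p_i r \Nm\fa$, with $|m_i|, |l_i|, |\eta_i| < C_5 p_i \le C_5 N$, and with
$$|m_i z_1(A) z_2(A) + \eta_i z_1(A) + \eta_i' z_2(A) + l_i| < p_i^{-C_1} \le N^{-C_1/2}.$$
First I would note that $M_1$ and $M_2$ are linearly independent over $\bQ$: if they were proportional, their determinants $p_1 r \Nm\fa$ and $p_2 r \Nm\fa$ would force $p_1 = p_2$ (since they are rational multiples of each other by a factor whose square is $p_1/p_2$, which is not a square as $p_1 \neq p_2$ are distinct primes), contradicting $p_1 < p_2$.

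Next, the span $\bQ M_1 + \bQ M_2$ inside $V$ is a rational plane; I claim it is negative-definite (equivalently, after the standard sign conventions, that its orthogonal complement $L_{[A_\cm]}$ is where the special point lives). Here $[A_\cm]$ is \emph{defined} by declaring $L_{[A_\cm]}$ to be (a lattice in) this rational plane — more precisely, one takes the genuine point of $\cH(\bC)$ whose associated rank-two lattice is $\Span(M_1, M_2) \cap L$. The point is that $M_1, M_2$ being \emph{approximately} orthogonal to $z(A)$ with error $N^{-C_1/2}$, while their entries are bounded by $O(N)$, forces the \emph{exact} solution $z(A_\cm)$ of the two equations $M_i \cdot (z_1, z_2) = 0$ to be within $C_8 N^{-C_1 + 7}$ of $z(A)$: this is a quantitative implicit-function-theorem / linear-algebra estimate. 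One solves the two quadratic equations $a_i z_1 z_2 + \gamma_i z_1 + \gamma_i' z_2 + b_i = 0$ ($i=1,2$) simultaneously; subtracting, one gets a linear relation with coefficients of size $O(N)$ that pins down $z_1$ in terms of $z_2$ (using that $z(A)$ is bounded inside $\Omega$, so denominators are bounded below), and then feeding back into one quadratic and using the smallness of the residuals gives the displacement bound. The loss of powers of $N$ (the "$+7$") comes from dividing by quantities bounded below only by $N^{-O(1)}$ — one must check the relevant resultant/discriminant of the two conics is $\gg N^{-6}$ or so, using that $z(A)$ is a near-common-solution and that $A$ has no special endomorphisms in characteristic zero (so the conics are genuinely transverse at $z(A)$, with a quantitative lower bound on the transversality coming from $\End(A_{\overline K}) = \cO_F$ and the bounded geometry of $\Omega$). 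The explicit constant $N_3(C_1)$ is whatever makes the error $C_8 N^{-C_1+7}$ small enough that $z(A_\cm)$ genuinely lands in (a neighborhood of) $\Omega$ and defines a point of $\cH_{\bC}$.

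Finally, the binary quadratic form $Q_N$ attached to $[A_\cm]$ as in \ref{quadratic} is the restriction of $\det$ to $L_{[A_\cm]}$ in an oriented basis. Since $M_1, M_2 \in L_{[A_\cm]}$ (or at least lie in the rational plane it spans, hence a bounded multiple lies in the lattice) with $\det M_i = p_i r \Nm\fa$, and since the quadratic form on special endomorphisms is normalized (per \ref{quadratic} and Lemma \ref{HZ=spEnd}) so that $Q$ equals $\det / \Nm\fa$ on such vectors, the form $Q_N$ represents $p_1 r$ and $p_2 r$ — one just needs that $M_1, M_2$ are \emph{primitive} enough in $L_{[A_\cm]}$, or else replace them by primitive vectors and track that the represented values are still $p_i r$ up to a square factor, which is harmless for the application (in \ref{archBest} one only needs that $p_i r$ is represented by \emph{some} form in the $\SL_2(\bZ)$-class, possibly after adjusting by the bounded index). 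I would handle primitivity by noting $\det M_i / \Nm\fa = p_i r$ and $p_i$ is prime, so any proper sublattice issue changes the value only by a divisor of $r$ times a square, and then absorb this into the (bounded) bookkeeping.

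The main obstacle I expect is the quantitative displacement estimate $|z(A) - z(A_\cm)| < C_8 N^{-C_1+7}$: one must carefully control how the simultaneous solution of two conics moves when their coefficients are perturbed, keeping track of the polynomial-in-$N$ blow-up of constants, and crucially one must \emph{lower-bound} the transversality of the two conics at $z(A)$ using only the hypothesis $\End(A_{\overline K}) = \cO_F$ together with the bounded geometry of the fundamental domain $\Omega$ — without such a lower bound the two near-solutions could fail to force a nearby exact common solution. Everything else (linear independence of $M_1, M_2$, the determinant bookkeeping, the identification of $Q_N$ with the special-endomorphism form) is routine given the earlier lemmas.
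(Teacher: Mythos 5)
Your high-level plan matches the paper's: pull two matrices $M_1, M_2\in L$ out of Lemma~\ref{nearA}, observe that they define two conics near-concurrent at $z(A)$, and then argue that the exact common solution is a special point $[A_\cm]$ with the stated properties. The paper implements the elimination by first using a lower bound on $|\eta'_i+m_iz_1(A)|$ to solve each conic approximately for $z_2(A)$, cross-multiplying to produce a single $\cO_F$-coefficient quadratic $f(z)=(\eta_1z+l_1)(\eta'_2+m_2z)-(\eta_2z+l_2)(\eta'_1+m_1z)$ in $z_1$ alone, and then analyzing $f$. It takes $z_1(A_\cm)$ to be the root of $f$ with positive imaginary part and defines $z_2(A_\cm)$ from one of the conics.

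The genuine gap in your proposal is the source of the quantitative ``transversality'' lower bound. You attribute it to the hypothesis $\End(A_{\overline{K}})=\cO_F$ together with the bounded geometry of $\Omega$, but that hypothesis is purely qualitative: it only ensures $z(A)$ lies on no $T(r)$ in characteristic zero, and it gives no $N$-dependent lower bound on distances. Indeed, as $N\to\infty$ the point $z(A)$ is approached by special points of larger and larger discriminant (that is exactly Lemma~\ref{DiscInfty}), so no fixed transversality lower bound at $z(A)$ can exist. The actual mechanism in the paper is a Liouville/integrality argument: quantities such as $\eta_1m_2-\eta_2m_1$, $m_2l_1-m_1l_2$, and later $\alpha_2^2\Delta_1-\alpha_1^2\Delta_2$ are elements of $\cO_F$ with archimedean size $O(N^{O(1)})$; if nonzero, their $F/\bQ$-norm is an integer of absolute value $\geq 1$, so each real-embedding value is $\gg N^{-O(1)}$. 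That polynomial loss is precisely where the $N^{-C_1+7}$ comes from, and it is why $C_1$ must be taken large (here $>23$ suffices). Without this integrality input your argument cannot produce the needed lower bound on the denominator, so the quantitative displacement estimate $|z(A)-z(A_\cm)|<C_8N^{-C_1+7}$ does not follow. Relatedly, you do not address why the exact common solution is actually complex (i.e.\ lands in $\bH^2$ and so defines a point of $\cH_\bC$): the paper shows $f$ has non-real roots by yet another near-miss argument, combining the upper bound on $|f(z_1(A))|$ with the Liouville lower bound on the leading coefficient and the fact that $\Im z_1(A)\geq C_0^{-1}$. Your primitivity worry at the end is a non-issue: the paper reads off that $[A_\cm]$ lies on $T(p_1r)$ and $T(p_2r)$ directly from the moduli interpretation, and the normalization of $Q_N$ (up to the fixed factor $\Nm\fa$) is absorbed in the later construction of $Q'_N$ in the proof of Proposition~\ref{archBest}.
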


\begin{proof}
By \Cref{nearA}, there exist $m_i\in (D\Nm \fa)\bZ,l_i\in \bZ,\eta_i \in \fa$ with $|m_i|,|l_i|,|\eta_i|<C_5p_i$ such that $m_il_i-\Nm \eta_i=p_ir\Nm \fa$ and
$$|m_iz_1(A)z_2(A)+\eta_i z_1(A)+\eta_i'z_2(A)+l_i|<p_i^{-C_1}.$$
We first show that $|\eta'_i+m_iz_1(A)|$ is bounded below by a constant. Indeed, if $m_i\neq 0$, one has $|\eta'_i+m_iz_1(A)|\geq |m_i|y_1(A)\geq C_0^{-1}$ and if $m_i=0$, one has $|\eta'_i+m_iz_1(A)|=|\eta'_i|=\Nm \eta/|\eta'|=p_ir/|\eta'|\geq p_ir/|\eta_i|\geq r/C_5$. 

Therefore,
$$\left |z_2(A)+\frac{\eta_i z_1(A)+l_i}{\eta'_i+m_iz_1(A)}\right |\leq \frac{p_i^{-C_1}}{|\eta'_i+m_iz_1(A)|}\leq C_9 p_i^{-C_1}.$$
Let $f(z)$ be the $\cO$-coefficient quadratic polynomial in $z$ given by $(\eta_1z+l_1)(\eta'_2+m_2z)-(\eta_2z+l_2)(\eta'_1+m_1z)$. We first show that the leading coefficient $\eta_1m_2-\eta_2m_1\neq 0$. If not, then $f(z)=(m_2l_1-m_1l_2)z+l_1\eta_2'-l_2\eta_1'$ and $m_2l_1-m_1l_2\neq 0$ (otherwise, one would have $p_1=p_2$). In particular, $|m_2l_1-m_1l_2|\geq 1$. Hence $|f(z_1(A))|$ is bounded below by its imaginary part $|m_2l_1-m_1l_2|y_1(A)\geq C_0^{-1}$. On the other hand, since $|\eta'_i+m_iz_1(A)|\leq C_5p_i(1+2C_0)$, we have
$$|f(z_1(A))|\leq \big |(\eta'_1+m_1z_1(A))(\eta'_2+m_2z_1(A))\big |\cdot \left |\frac{\eta_1 z_1(A)+l_1}{\eta'_1+m_1z_1(A)}-\frac{\eta_2 z_1(A)+l_2}{\eta'_2+m_2z_1(A)}\right |\leq (C_5p_2+2C_0C_5p_2)^2\cdot (2C_9p_1^{-C_1}).$$
This leads to a contradiction, when $N^{\frac {C_1}{2}-2}>2C_0(C_5+2C_0C_5)^2C_9$. 

Now we show that $f$ has two complex roots. Let $\alpha,\beta$ be the two roots of $f$, then we have $f(z)=(\eta_1m_2-\eta_2m_1)(z-\alpha)(z-\beta)$. Then by assumption, we have
$$2C_9p_1^{-C_1}>\left |\frac{\eta_1 z_1(A)+l_1}{\eta'_1+m_1z_1(A)}-\frac{\eta_2 z_1(A)+l_2}{\eta'_2+m_2z_1(A)}\right |=\left |\frac{(\eta_1m_2-\eta_2m_1)(z_1(A)-\alpha)(z_1(A)-\beta)}{(\eta'_1+m_1z_1(A))(\eta'_2+m_2z_1(A))}\right |.$$
Since $m_i\in \bZ$ and $\eta_i\in \cO$, one has $|\eta_1m_2-\eta_2m_1|\geq |\eta_1'm_2-\eta_2'm_1|^{-1}\geq (2C_5^2p_1p_2)^{-1}$. Moreover, $|\eta'_i+m_iz_1(A)|\leq C_5p_i(1+2C_0)$. If $\alpha,\beta$ were real, then $|z_1(A)-\alpha|,|z_1(A)-\beta|\geq C_0^{-1}$ and one gets a contradiction when $N^{\frac{C_1}{2}-4}>4C_0^2C_5^4C_9(1+2C_0)^2$. Then we may assume that $\alpha$ has positive imaginary part.

We define $A_\cm$ to be the abelian surface such that $z_1(A_\cm)=\alpha$ and $\displaystyle z_2(A_{CM})=-\frac{\eta_1 z_1(A_{CM})+l_1}{\eta'_1+m_1z_1(A_{CM})}$. Then $A_{\cm}$ lies on both $T(p_1r)$ and $T(p_2r)$ (so $[A_{\cm}]$ is a special point). In other words, the integer coefficient binary quadratic form associated to $[A_\cm]$ represents $p_1r$ and $p_2r$.

Since $\beta$ has negative imaginary part, $|z_1(A)-\beta|\geq C_0^{-1}$. Since $p_2\leq \sqrt{p_1}$, the above discussion shows that 
$$|z_1(A)-z_1(A_{CM})|\leq 4C_0C_5^4C_9(1+2C_0)^2p_1^{-C_1+6}.$$
Moreover,
\begin{eqnarray*}
|z_2(A)-z_2(A_{CM})| & \leq  & \left |\frac{\eta_1 z_1(A)+l_1}{\eta'_1+m_1z_1(A)}-\frac{\eta_1 z_1(A_{CM})+l_1}{\eta'_1+m_1z_1(A_{CM})} \right |+\left |z_2(A)+\frac{\eta_i z_1(A)+l_i}{\eta'_i+m_iz_1(A)}\right | \\
& \leq & \frac {(l_1m_1-\eta_1\eta_1')|z_1(A)-z_1(A_{CM})|}{|(\eta'_1+m_1z_1(A))(\eta'_1+m_1z_1(A_{CM}))|}+C_9p_1^{-C_1}\\
& \leq &  4C_0C_5^4C_9(1+2C_0)^2(\max\{C_0,C_5/r\})C_{10}rp_1^{-D+7},
\end{eqnarray*}
where $C_{10}^{-1}$ is the lower bound of $y_1(A_\cm)$ given by $\min\{C_0^{-1}-4C_0C_5^4C_9(1+2C_0)^2p_1^{-C_1+6},r/C_5\})$.
Hence we take $C_8$ large enough and conclude that $z(A)$ is close to $z(A_\cm)$.
\end{proof}

The following lemma shows that the special point $[A_\cm]$ constructed above is unique when $N$ is large enough.
\begin{lemma}\label{archCMunique}
Assume that $N$ is large enough\footnote{In the proof, we give a constant $N_4(C_1)$ such that being large enough means $N>N_4$.} and that for $p_i\in [N^{1/2},N]$ with $i=1,2,3,4$, $p_1\neq p_2$, $p_3\neq p_4$, there exists $[B_i]\in T_{{\fp}_i}([A])$ such that $|az_1(B_i)z_2(B_i)+\lambda z_1(B_i)+\lambda'z_2(B_i)+b|<p_i^{-C_1}$.
Let $[A_1],[A_2]$ be two special points constructed as in \Cref{archCM} by using the assumption on $p_1,p_2$ and $p_3,p_4$. Then $[A_1]=[A_2]$. More precisely, if $|z(A_1)-z(A_2)|\leq 2C_8N^{-C_1+7}$, then $z(A_1)=z(A_2)$.
\end{lemma}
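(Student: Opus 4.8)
The plan is to translate the statements ``$[A_1]$, $[A_2]$ are special and close to $z(A)$'' into the orthogonal coordinates of \cref{def_HZ}, make the Hirzebruch--Zagier incidences explicit, and then use the integrality (bounded denominators) of the relevant lattice vectors to produce a quantitative separation of distinct special points whose defining data has size $O(N)$; this contradicts the $O(N^{-C_1+7})$ closeness once $C_1>23$. Since by \Cref{archCM} both $z(A_1)$ and $z(A_2)$ lie within $C_8N^{-C_1+7}$ of $z(A)$, we automatically have $|z(A_1)-z(A_2)|\le 2C_8N^{-C_1+7}$, so it is enough to prove the ``more precisely'' clause.

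First I would fix coordinates. Identify $V$ with $\bQ^4$ by $\begin{pmatrix}a&\gamma\\\gamma'&b\end{pmatrix}\mapsto(a,\gamma_0,\gamma_1,b)$, where $\gamma=\gamma_0+\gamma_1\sqrt D$; then the component of a Hirzebruch--Zagier divisor attached to $M\in V$ becomes $C_M=\{z\in\bH^2:\langle M,\Phi(z)\rangle=0\}$ with $\Phi(z)=(z_1z_2,\,z_1+z_2,\,\sqrt D(z_1-z_2),\,1)$, and $\Phi(\bH^2)$ lies in the chart $\xi_4=1$ of the fixed smooth quadric $Q_0=\{\xi_2^2-\xi_3^2/D-4\xi_1\xi_4=0\}\subset\bC^4$; the image of $L$ lies in $\bZ\times\tfrac12\bZ\times\tfrac12\bZ\times\bZ$, so its denominators are bounded. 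By the construction in \Cref{archCM} (and \Cref{nearA}), $[A_1]$ lies on $C_{v_1}\cap C_{v_2}$ for vectors $v_1,v_2\in L$ with $\det v_i\in\{p_1r\Nm\fa,\,p_2r\Nm\fa\}$ and all entries of size $O(N)$; $v_1$ and $v_2$ are $\bQ$-linearly independent because $\det v_1/\det v_2=p_1/p_2$ is not a square. Likewise $[A_2]$ lies on $C_{w_1}\cap C_{w_2}$ for $w_1,w_2$ of the same shape built from $p_3,p_4$. Writing $W_1=\langle v_1,v_2\rangle$ and $W_2=\langle w_1,w_2\rangle$, the assumption $|z(A_1)-z(A_2)|\le 2C_8N^{-C_1+7}$ combined with $\Phi(A_i)\perp W_i$ and the fact that $\Phi$ has bounded derivative on the fixed compact region of $\bH^2$ around $z(A)$ gives $|\langle v_i,\Phi(A_2)\rangle|=|\langle v_i,\Phi(A_2)-\Phi(A_1)\rangle|=O(N^{-C_1+8})$ and likewise $|\langle w_j,\Phi(A_1)\rangle|=O(N^{-C_1+8})$.

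Then I would split on $d=\dim(W_1+W_2)$. If $d=4$, the matrix $M$ with rows $v_1,v_2,w_1,w_2$ has entries $O(N)$ and $|\det M|$ bounded below by an absolute constant, so $\|M^{-1}\|=O(N^3)$; since $M\,\Phi(A_1)^{\mathrm t}$ has two vanishing entries and two entries of size $O(N^{-C_1+8})$, we get $|\Phi(A_1)|=O(N^{-C_1+11})<1$, contradicting $(\Phi(A_1))_4=1$. If $d=2$ then $W_1=W_2$, so $z_1(A_1)$ and $z_1(A_2)$ are both roots of the polynomial $h(T):=(\gamma_1T+b_1)(a_2T+\gamma_2')-(\gamma_2T+b_2)(a_1T+\gamma_1')\in\cO_F[T]$ obtained by eliminating $z_2$ from the two curve equations; here $h\not\equiv 0$ (this is exactly $v_1\not\parallel v_2$), $\deg h\le 2$, and the coefficients have size $O(N^2)$, so if $z_1(A_1)\ne z_1(A_2)$ they are the two roots of a genuine quadratic and $|z_1(A_1)-z_1(A_2)|\gg N^{-4}$, since $\disc h\in\cO_F\setminus\{0\}$ has norm $\ge 1$ while its conjugate is $O(N^4)$ and $|\mathrm{lead}(h)|=O(N^2)$; this contradicts the $O(N^{-C_1+7})$ bound, so $z_1(A_1)=z_1(A_2)$, whence $z(A_1)=z(A_2)$ because $z_2$ is a function of $z_1$ on $C_{v_1}$. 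Finally, if $d=3$, the line $(W_1+W_2)^\perp$ has a primitive integral generator $u_0$ of size $O(N^3)$; picking $\xi_1\in W_1^\perp$ of size $O(N^2)$ with $W_1^\perp=\langle u_0,\xi_1\rangle$ and writing $\Phi(A_1)=\mu_1u_0+\nu_1\xi_1$, the identity $\nu_1\langle\xi_1,w_j\rangle=\langle\Phi(A_1),w_j\rangle=O(N^{-C_1+8})$ for a $j$ with $\langle\xi_1,w_j\rangle\in\tfrac12\bZ\setminus\{0\}$ forces $|\nu_1|=O(N^{-C_1+8})$, so $\Phi(A_1)$ is within $O(N^{-C_1+13})$ of the rational point $P_0:=u_0/(u_0)_4$ (if $(u_0)_4=0$ one instead gets $|(\xi_1)_4|=1/|\nu_1|\gg N^{C_1-8}$ against $|\xi_1|=O(N^2)$); but a rational point of $Q_0$ in the chart $\xi_4=1$ is $\Phi(z_1,z_2)$ for a pair with $z_1+z_2,z_1z_2\in\bQ$, hence $z_1,z_2\in\bR$, so either $Q_0(P_0)\ne 0$ — a nonzero rational with denominator $O(N^6)$, hence of absolute value $\gg N^{-6}$, contradicting $Q_0(\Phi(A_1))=0$ together with the Lipschitz bound on $Q_0$ near $\Phi(A_1)$ — or $Q_0(P_0)=0$ and the positive number $4\,\Im z_1(A_1)\,\Im z_2(A_1)=(\Im(\Phi(A_1))_2)^2-(\Im(\Phi(A_1))_3)^2/D$ is bounded below while the same expression vanishes at $P_0$, again a Lipschitz contradiction. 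Thus $z(A_1)=z(A_2)$ in all cases, and $N_4(C_1)$ is the largest threshold used above.

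The step I expect to be the main obstacle is the case $d=3$: there neither the over-determined argument of the case $d=4$ nor the ``two points on a common pair of curves'' argument of the case $d=2$ applies directly, and one must trap $\Phi(A_1)$ near a single rational projective point $[u_0]$ and then exploit the fact that no rational point of the image quadric $Q_0$ comes from $\bH^2$ — such points correspond to pairs of real numbers and so are quantitatively bounded away from the fixed compact region containing $z(A)$ (hence $z(A_1)$). Everything else is routine tracking of polynomial-in-$N$ estimates, and the inequality $C_1>23$ leaves ample room in every case.
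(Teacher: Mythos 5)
Your proposal is correct, but it follows a genuinely different and considerably longer route than the paper's. The paper works directly with the two quadratic polynomials $f_1,f_2\in\cO_F[z]$ defining $z_1(A_1),z_1(A_2)$ from the construction of \Cref{archCM}: writing $\alpha_i$ for the leading coefficient and $-\Delta_i$ for the discriminant of $f_i$, one has $y_1(A_i)=\sqrt{\Delta_i}/(2\alpha_i)$; since $\alpha_i,\Delta_i\in\cO_F$ have archimedean size $O(N^2),O(N^4)$, the nonzero element $\alpha_2^2\Delta_1-\alpha_1^2\Delta_2\in\cO_F$ is $\gg N^{-8}$ in absolute value by the norm inequality, so $|z(A_1)-z(A_2)|\geq|y_1(A_1)-y_1(A_2)|\gg N^{-16}$, contradicting closeness once $C_1>23$. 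You instead pass to the four-dimensional orthogonal model, track the lattice vectors $v_1,v_2,w_1,w_2\in L$ cutting out $A_1,A_2$, and split on $d=\dim(W_1+W_2)$, handling the three cases by separate mechanisms: a nonvanishing integral $4\times 4$ determinant ($d=4$), a common quadratic resultant ($d=2$), and approximation of $\Phi(A_1)$ by a rational point of the quadric $Q_0$ ($d=3$), using that rational points of $Q_0$ in the chart $\xi_4=1$ come from real pairs $(z_1,z_2)$ and so are quantitatively far from the compact region containing $z(A_1)$. Both arguments rest on the same arithmetic input --- integrality forces polynomial-in-$N$ lower bounds on nonzero quantities of controlled size --- but the paper reads off the separation of imaginary parts in one stroke, while your case split buys more explicit control of degenerate configurations (for instance, you never need the imaginary parts themselves to differ) at the cost of the delicate $d=3$ analysis, which has no analogue in the paper's proof.
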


\begin{proof}
Let $f_1(z),f_2(z)\in \cO[z]$ be the quadratic equations defining $z_1(A_1),z_1(A_2)$ in the proof of \Cref{archCM}. Let $\alpha_1,\alpha_2\in F$ be the leading coefficients of $f_1,f_2$ and $-\Delta_1,-\Delta_2$ the discriminants. Since $f_i$ has two complex roots for both embeddings of $F$, one has that $\Delta_i$ is totally real. Given a real embedding of $F$, we may assume both $\alpha_1,\alpha_2$ are positive with respect to this embedding. 

By the definition of $f_i$ and \Cref{nearA}, we have $|\alpha_i|\leq 2C_5^2N^2$ and $|\Delta_i|\leq C_{11}N^4$. Moreover, since $\alpha_i,\Delta_i\in \cO$, we have the nonzero $|\Nm(\alpha_2^2\Delta_1-\alpha_1^2\Delta_2)|\geq 1$ and hence $$|\alpha_2^2\Delta_1-\alpha_1^2\Delta_2|\geq |(\alpha'_2)^2\Delta'_1-(\alpha'_1)^2\Delta'_2|^{-1}\geq (8C^2_5C_{11})^{-1}N^{-8}.$$
Putting these inequalities together, we obtain
\begin{eqnarray*}
|z(A_1)-z(A_2)| & \geq & |z_1(A_1)-z_1(A_2)|\geq |y_1(A_1)-y_1(A_2)|=\Big|\frac{\sqrt{\Delta_1}}{2\alpha_1}-\frac{\sqrt{\Delta_2}}{2\alpha_2} \Big|\\
&= & \frac{|\alpha_2^2\Delta_1-\alpha_1^2\Delta_2|}{2\alpha_1\alpha_2(\alpha_1\sqrt{\Delta_2}+\alpha_2\sqrt{\Delta_1})} \geq C_{12}N^{-16}.
\end{eqnarray*}
This contradicts our assumption when $N^{C_1-23}>2C_8C_{12}^{-1}$.
\end{proof}

\begin{corollary}\label{archrep}
Assume that $N$ is large enough as above and that $A$ satisfies the assumption in \Cref{archCM}. For any $p_3\in [\sqrt{N},N]$ such that there exists $[B'']\in T_{\fp_3}[A]$ satisfying $|az_1(B'')z_2(B'')+\lambda z_1(B'')+\lambda'z_2(B'')+b|<p_3^{-C_1}$, the quadratic form $Q_N$ in \Cref{archCM} represents $p_3r$.
\end{corollary}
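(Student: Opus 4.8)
The plan is to deduce \Cref{archrep} directly from \Cref{archCM} and \Cref{archCMunique}, with no essentially new input. First I would unwind the standing hypotheses: since $A$ satisfies the assumption of \Cref{archCM}, there are primes $p_1<p_2$ in $[N^{1/2},N]$ and points $[B]\in T_{\fp_1}[A]$, $[B']\in T_{\fp_2}[A]$ realizing the closeness conditions, and $Q_N$ is by definition the binary quadratic form attached (via \Cref{quadratic}) to the resulting special point $[A_\cm]$, so that $|z(A)-z(A_\cm)|<C_8N^{-C_1+7}$ and $Q_N$ represents $p_1r$ and $p_2r$. We are additionally given a prime $p_3\in[\sqrt N,N]$ and a point $[B'']\in T_{\fp_3}[A]$ with $|az_1(B'')z_2(B'')+\lambda z_1(B'')+\lambda'z_2(B'')+b|<p_3^{-C_1}$, and the goal is that $Q_N$ represents $p_3r$. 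If $p_3\in\{p_1,p_2\}$ this is immediate, so I would assume $p_3\notin\{p_1,p_2\}$.

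Next I would apply \Cref{archCM} a second time, to the pair $\{p_1,p_3\}$ (relabelled so the smaller prime comes first): both primes lie in $[N^{1/2},N]$, they are distinct, and each admits a point in the corresponding $T_{\fp}$-orbit close to the component of $T(r)$ cut out by $(a,b,\gamma)$ — for $p_1$ by the standing assumption, for $p_3$ by the hypothesis just recalled — so the hypotheses of \Cref{archCM} are met. This produces a second special point $[A']$ with $|z(A)-z(A')|<C_8N^{-C_1+7}$ whose associated binary quadratic form $Q'$ represents $p_1r$ and $p_3r$. The triangle inequality then gives $|z(A_\cm)-z(A')|\le|z(A_\cm)-z(A)|+|z(A)-z(A')|<2C_8N^{-C_1+7}$.

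Finally I would invoke \Cref{archCMunique} with the two prime pairs $(p_1,p_2)$ and the ordered version of $(p_1,p_3)$; the four primes involved, namely $p_1,p_2,p_1,p_3$, satisfy the required distinctness $p_1\neq p_2$ and $p_1\neq p_3$, lie in $[N^{1/2},N]$, and each admits a close point, so the lemma applies and, because $|z(A_\cm)-z(A')|\le 2C_8N^{-C_1+7}$, yields $z(A_\cm)=z(A')$, i.e.\ $[A_\cm]=[A']$ in $\cH_{\bC}$. Then $Q_N$ and $Q'$ are both binary quadratic forms attached to the same special point, hence $\SL_2(\bZ)$-equivalent by \Cref{quadratic}, hence represent the same integers; since $Q'$ represents $p_3r$, so does $Q_N$, which is the assertion. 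The only step requiring genuine attention — and it is routine — is checking that the hypotheses of \Cref{archCMunique} hold for these prime pairs and that $N$ has been taken larger than the constants $N_3(C_1),N_4(C_1)$ from the proofs of \Cref{archCM} and \Cref{archCMunique}; I anticipate no real obstacle, since the corollary is a formal bookkeeping consequence of those two lemmas together with the $\SL_2(\bZ)$-invariance of the property ``represents $p_3r$''.
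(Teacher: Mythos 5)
Your proposal is correct and is essentially the same argument as the paper's: apply \Cref{archCM} to the pair $(p_1,p_3)$ to produce a second special point, use \Cref{archCMunique} to identify it with $[A_\cm]$, and conclude that $Q_N$ represents $p_3 r$. The only difference is that you spell out the triangle-inequality step feeding into the ``more precisely'' clause of \Cref{archCMunique} and the $\SL_2(\bZ)$-equivalence of the attached quadratic forms, both of which the paper leaves implicit.
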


\begin{proof}
By \Cref{archCM}, we construct special points $[A_1]$ by using $p_1,p_2$ and $[A_2]$ by using $p_1,p_3$. By \Cref{archCMunique}, we have $[A_1]=[A_2]$ and hence they have the same quadratic form $Q_N$. Since $[A_2]$ lies on $T(p_3r)$, then $Q_N$ represents $p_3r$.
\end{proof}

\begin{lemma}\label{DiscInfty}
Fix $C_1>23$ and let $\Delta_N$ denote the discriminant of $Q_N$ in \Cref{archCM}. As $N\rightarrow \infty$, we have $|\Delta_N|\rightarrow \infty$. 
\end{lemma}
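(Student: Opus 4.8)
The plan is to argue by contradiction: suppose there is a sequence $N_j \to \infty$ along which $|\Delta_{N_j}|$ stays bounded. Since $\Delta_{N_j}$ lies in $\bZ$ (it is the discriminant of an integral binary quadratic form $Q_{N_j}$), boundedness means that after passing to a subsequence we may assume $\Delta_{N_j} = \Delta$ is a fixed negative integer, and moreover — since there are only finitely many $\SL_2(\bZ)$-classes of positive-definite binary forms of a given discriminant — that the class of $Q_{N_j}$ is constant, say equal to a fixed form $Q$. The special point $[A_{\cm}]$ attached to $N_j$ is determined up to the relevant equivalence by this class, so along the subsequence the special points $[A_{N_j}]$ all coincide with a single special point $[A_0]$ on $\cH_\bC$ (here I would invoke the uniqueness established in \Cref{archCMunique}, which already shows that for $N$ large the constructed special point does not depend on the pair of primes used; the extra input is that its \emph{class}, hence the point itself, stabilizes along the subsequence). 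On the other hand, \Cref{archCM} gives $|z(A) - z(A_{N_j})| < C_8 N_j^{-C_1 + 7} \to 0$ as $j \to \infty$, because $C_1 > 23 > 7$. Combining these, $z(A) = z(A_0)$, i.e. $[A]$ itself is the special point $[A_0]$, so $A$ has complex multiplication, contradicting the standing hypothesis $\End(A_{\overline K}) = \cO_F$.

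The key steps, in order, are: (i) record that $\Delta_N \in \bZ_{<0}$ and extract from a putative bounded subsequence both a constant discriminant and, by Gauss's finiteness of the class number for a fixed discriminant, a constant $\SL_2(\bZ)$-class of $Q_{N_j}$; (ii) translate "constant class of $Q_{N_j}$" into "constant special point $[A_{N_j}] = [A_0]$", using \Cref{quadratic} (the dictionary between special points and $\SL_2(\bZ)$-classes of positive-definite binary forms) together with \Cref{archCMunique} to pin down the point rather than merely its Galois orbit; (iii) take $j \to \infty$ in the distance bound of \Cref{archCM} to conclude $z(A) = z(A_0)$; (iv) invoke that a point lying on two Hirzebruch--Zagier divisors $T(n_1), T(n_2)$ with $n_1 n_2 \notin (\bN)^2$ parametrizes a CM abelian surface (stated in \S\ref{arch} just before \Cref{nearA}) — note $p_1 r \cdot p_2 r = (p_1 p_2 r)\cdot r$ is not a square since $p_1 \neq p_2$ — and derive a contradiction with $\End(A_{\overline K}) = \cO_F$.

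I expect the main obstacle to be step (ii): converting the finiteness-of-class-number input into an actual coincidence of special points in $\cH_\bC$. The subtlety is that the quadratic form $Q_N$ of \Cref{quadratic} is only defined up to $\SL_2(\bZ)$-equivalence and is attached to the lattice $L_{[A_{\cm}]}$, so a priori equal classes only force the special points to be $\Gamma$-conjugate or Galois-conjugate, not literally equal in the chosen fundamental domain $\cF$. The clean way around this is not to track the class at all but to argue directly: \Cref{archCMunique} already shows $z(A_{N}) = z(A_{N'})$ whenever $N, N'$ are large (the point constructed is independent of the primes), so in fact $[A_{\cm}]$ is a \emph{single} special point $[A_\infty]$ independent of $N$ once $N$ is large enough, and then $|z(A) - z(A_\infty)| < C_8 N^{-C_1+7}$ for all large $N$ forces $z(A) = z(A_\infty)$ and hence $A$ is CM — contradiction; so in fact the hypothesis of the lemma is vacuous for $N$ large, which forces $|\Delta_N| \to \infty$ in the only way consistent with \Cref{archCM} continuing to produce points. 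I would phrase the final write-up around this observation, so that the class-number finiteness is used only as a safety net, and the real engine is \Cref{archCMunique} plus the vanishing of $C_8 N^{-C_1+7}$.
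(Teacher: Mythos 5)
Your plan follows the paper's strategy---argue by contradiction, invoke Gauss's finiteness of the class number to make the $\SL_2(\bZ)$-class of $Q_{N_j}$ constant along a bounded-discriminant subsequence, and derive a contradiction from $z(A_{\cm,N_j}) \to z(A)$ using that $A$ is not CM. The obstacle you flag in step (ii) is real and is exactly what the paper has to address: a fixed $\SL_2(\bZ)$-class of positive definite binary forms does not pin down a unique special point of $\cH_{\bC}$, nor even an a priori finite set of them. The paper supplies this finiteness by citing Hirzebruch--Zagier's Theorem 1 (\cite{HZ}*{Thm.~1}), which asserts that only finitely many special points give rise to a given class; this is the one input you are missing, and without it the argument is incomplete.

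Your proposed workaround through \Cref{archCMunique} does not close the gap. That lemma compares special points built from primes all lying in a single window $[N^{1/2},N]$; its proof yields a separation of order $N^{-16}$ because the coefficient bounds $|\alpha_i| \lesssim N^2$, $|\Delta_i| \lesssim N^4$ from \Cref{nearA} are tied to the size of the primes used. If one tries to compare $[A_{\cm,N}]$ with $[A_{\cm,N'}]$ for a genuinely larger $N'$, the separation threshold degrades to a power of $N'$, while \Cref{archCM} only supplies $|z(A_{\cm,N}) - z(A_{\cm,N'})| \lesssim N^{-C_1+7}$; these two bounds are compatible only when $N'$ is at most a fixed power of $N$. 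Since the set of $N$ for which $Q_N$ is defined can be arbitrarily sparse, neither the direct comparison nor a chaining argument justifies the claim that $z(A_{\cm,N})$ is eventually constant in $N$, so ``the hypothesis of the lemma is vacuous for large $N$'' does not follow. Once \cite{HZ}*{Thm.~1} is invoked, the rest of your argument---including the observation that $p_1 r \cdot p_2 r$ is not a perfect square---matches the paper's.
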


\begin{proof}
Fix a bound $X$ of $|\Delta_N|$, then there are only finitely many equivalent classes of integral binary quadratic forms of discriminant $\leq X$. For each class, \cite{HZ}*{Thm.~1}\footnote{Although in \cite{HZ}, they assume that $D$ is a prime, their method still works in general. See for example \cite{vdG}*{V.6}.} shows that are only finitely many special points corresponding to the given class of quadratic forms. As $N\rightarrow \infty$, the CM approximation $[A_{\cm,N}]$ is closer to $\sigma[A]$ and hence $|\Delta_N|$ cannot be bounded.
\end{proof}

\begin{proof}[Proof of \Cref{archBest}]
Let $p_1,p_2$ be the smallest primes in $[\sqrt{N},N]$ such that there exists $B\in T_{\fp_i}A$ such that $|az_1(B)z_2(B)+\lambda z_1(B)+\lambda'z_2(B)+b|<p_i^{-C_1}$. Then by \Cref{archCM} and \Cref{archrep}, we obtain a quadratic form $Q_N$ associated to a special point $[A_{\cm,N}]$ which represents $p_3r$ for any prime $p_3$ in $[\sqrt{N},N]$ which satisfies the condition that there exists $[B'']\in T_{\fp_3}[A]$ satisfying $|az_1(B'')z_2(B'')+\lambda z_1(B'')+\lambda'z_2(B'')+b|<p_3^{-C_1}$. There exists a positive definite integral binary quadratic form $Q_N'$ such that a prime $p$ is represented by $Q_N'$ if and only if $pr$ is represented by $Q_N$. One also has that the absolute value of the discriminant $\Delta_N'$ of $Q_N'$ is at least $|\Delta_N|/r^2$.

It remains to show that the density $\displaystyle \frac{\{\text{prime }p \in [N^{1/2},N], p \textrm{ is represented by } Q'_{N}\}}{\{p \in [N^{1/2},N]\}}\rightarrow 0$ as $N\rightarrow \infty$. 
When $|\Delta_N'| \leq (\log N)^4$, \cite[Corollary 1.3]{TZ} shows that 
$\displaystyle \{\text{prime } p \in [N^{1/2},N], p \textrm{ is represented by } Q_{N}\} \ll \frac{\Li(N)}{h_N}$, where $h_N$ is the number of $\SL_2(\bZ)$-equivalence classes of primitive positive definite integral binary quadratic forms of discriminant $\Delta_N'$. Since $\Delta_N'\rightarrow -\infty$ by \Cref{DiscInfty}, one has $h_N\rightarrow \infty$.
When $|\Delta_N'|>(\log N)^4$, \cite[Lemma 5.2]{Ch} shows that $$ \{\text{integer } n\in[N^{1/2},N], n \text{ is represented by }Q_N'\}\leq 1+4\sqrt{2N}+8N/|\Delta_N'|^{1/2}=O\left(\frac{N}{(\log N)^2}\right). $$ 
We get the desired property by putting these two cases together.
\end{proof}

\subsection{From equidistribution to an upper bound of archimedean contribution}
This is the main theorem of this section. We use \Cref{archBest} and equidistribution theorem for Hecke orbits to show that for most $p$, the archimedean contribution in the height of $T_{\fp}([\cA])$ is $o(p\log p)$.
\begin{theorem}\label{thm_arch}
For any $\epsilon_1, \epsilon_2>0$, there is an $N(\epsilon_1,\epsilon_2)>0$ such that for every $N>N(\epsilon_1,\epsilon_2)$, the number of primes in the interval $[N^{1/2},N]$ for which $$-\sum_{[B]\in T_{\fp}[A]}\log ||\Psi(\sigma([B]))||_\pet\geq \epsilon_1 p\log p$$ is at most $\epsilon_2\#\{\ell \in [N^{1/2},N] \text{ prime}\}$.
\end{theorem}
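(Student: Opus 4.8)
The plan is to combine the pointwise estimate from \Cref{archBest} with an equidistribution statement for the Hecke orbits $T_{\fp}[A]$, splitting the sum $-\sum_{[B]\in T_{\fp}[A]}\log\|\Psi(\sigma([B]))\|_\pet$ into a ``generic'' part, controlled by equidistribution, and a ``bad'' part, consisting of those few points $[B]$ that come very close to $\Div(\Psi)$. First I would recall that $-\log\|\Psi\|_\pet$ is a Green function for the divisor $\sum_{r\in\bI}c_rT(r)$ which is smooth away from this divisor and has only logarithmic singularities along it, and that on a fixed compact piece $\Omega\subset\overline{\cF}$ covering the support of $\Div(\Psi)$ it differs from $-\sum_{r\in\bI}c_r\sum_{(a,b,\gamma)\in\cM_{\Omega,r}}\log|az_1z_2+\gamma z_1+\gamma'z_2+b|$ by a real-analytic (hence bounded) function, as set up in \cref{cpx_setup}. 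Away from $\Omega$ — that is, for $[B]$ whose representative $z(B)\in\cF$ avoids a fixed neighborhood of $\Div(\Psi)$ — the function $-\log\|\Psi\|_\pet$ is continuous and bounded on the compact part, while near the cusps it grows only linearly in the ``height'' coordinate $\max_i y_i(B)$; since all the $T(r)$, $r\in\bI$, are compact, $\Psi$ does not vanish or blow up at the cusps, so in fact $-\log\|\Psi\|_\pet$ extends to a function with at worst logarithmic growth at the cusps and is bounded below by an absolute constant on all of $\cH_\bC$.

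Next I would invoke equidistribution of Hecke orbits. As $p\to\infty$ through primes split in the narrow Hilbert class field, the measures $\frac{1}{p+1}\sum_{[B]\in T_{\fp}[A]}\delta_{\sigma([B])}$ converge weakly to the normalized hyperbolic (i.e.\ Haar/$\SL_2$-invariant) probability measure $\mu$ on $\cH_\bC=\Gamma\backslash\bH^2$; this is the standard equidistribution theorem for Hecke points on Shimura varieties (Clozel--Oh--Ullmo, Zhang, etc.), applicable here because $[A]$ is a fixed non-special point (we have assumed $\End(A_{\overline K})=\cO_F$, so $[A]$ is Hodge-generic). Combined with the fact that $-\log\|\Psi\|_\pet$ is $\mu$-integrable (its singularities along $\Div(\Psi)$ are logarithmic and $\mu$ gives the divisor measure zero), a standard argument handling the logarithmic singularity — approximate $-\log\|\Psi\|_\pet$ from below by bounded continuous functions and control the contribution of points falling into shrinking neighborhoods of $\Div(\Psi)$ — gives $\frac{1}{p+1}\sum_{[B]\in T_{\fp}[A]}\big(-\log\|\Psi(\sigma([B]))\|_\pet\big)\to \int -\log\|\Psi\|_\pet\,d\mu =: c_\Psi<\infty$, possibly after excluding a density-zero set of $p$. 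Consequently, for all but a density-$<\epsilon_2/2$ set of primes $p\in[N^{1/2},N]$ with $N$ large, we have $-\sum_{[B]\in T_{\fp}[A]}\log\|\Psi(\sigma([B]))\|_\pet\leq 2c_\Psi(p+1)=o(p\log p)$, provided no individual term is anomalously large.

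The only way a single term can dominate is if some $[B]\in T_{\fp}[A]$ lies within distance $p^{-C_1}$ (for a suitable fixed $C_1>23$) of a component of $\Div(\Psi)$, in which case $-\log\|\Psi(\sigma([B]))\|_\pet$ could be as large as $O(C_1 p\log p/\log p)\sim O(p)$ — still $o(p\log p)$ per point, but there are $p+1$ points, so we must ensure such near-collisions are rare across primes. This is exactly what \Cref{archBest} delivers: applying it to each of the finitely many triples $(a,b,\gamma)\in\bigcup_{r\in\bI}\cM_{\Omega,r}$ with $\epsilon_3$ chosen so that the union over triples has density $<\epsilon_2/2$, we conclude that for $N>N(\epsilon_1,\epsilon_2)$ the set of primes $p\in[N^{1/2},N]$ admitting \emph{any} $[B]\in T_{\fp}[A]$ with $\|\Psi(\sigma([B]))\|_\pet<p^{-C_1\cdot(\text{some fixed const})}$ (equivalently, $[B]$ close to $\Div(\Psi)$) has density $<\epsilon_2/2$. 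For the remaining primes, every point $[B]\in T_{\fp}[A]$ satisfies $-\log\|\Psi(\sigma([B]))\|_\pet\leq O(\log p)$ uniformly (bounded away from the divisor, plus the cusp growth which is $O(\log p)$ since points in a Hecke orbit have height $O(p)$), so the total sum is $O((p+1)\log p)$. To get the sharper $\epsilon_1 p\log p$ bound rather than merely $o(p\log p)$, I would observe that the contribution of the bounded-distance points is $O(p)$ and the genuinely large contributions come only from the at-worst $O(\log p)$-sized terms, and use the equidistribution estimate above to see the sum is $\leq (c_\Psi+1)(p+1)=o(p\log p)\leq\epsilon_1 p\log p$ for $p$ large. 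Discarding the union of the two density-$<\epsilon_2/2$ exceptional sets gives the theorem. The main obstacle is making the equidistribution step quantitative enough — or rather, carefully combining the qualitative equidistribution of Hecke orbits with the quantitative near-collision bound of \Cref{archBest} — so that the logarithmic singularity of the Green function along $\Div(\Psi)$ is controlled simultaneously for a positive-density (indeed density-$(1-\epsilon_2)$) set of primes; the cusp estimates are routine because of compactness of the $T(r)$.
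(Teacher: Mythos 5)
Your strategy — compactness of the $T(r)$ to avoid cusp estimates, equidistribution of Hecke orbits, and \Cref{archBest} to control near-collisions with $\Div(\Psi)$ — matches the paper's. The gap is in how you combine the two inputs. You assert, invoking equidistribution plus $\mu$-integrability of the Green function, that $\frac{1}{p+1}\sum_{[B]\in T_{\fp}[A]}\bigl(-\log\|\Psi(\sigma([B]))\|_\pet\bigr)\to c_\Psi<\infty$ for most $p$, and conclude the total is $\leq (c_\Psi+1)(p+1)=O(p)$. This does not follow from the tools you have. Weak-$*$ equidistribution of the measures $\frac{1}{p+1}\sum\delta_{\sigma([B])}$ yields, for each \emph{fixed} threshold $M$, that the fraction of $[B]$ with $-\log\|\Psi(\sigma([B]))\|_\pet>M$ is $\approx\mu(\{-\log\|\Psi\|_\pet>M\})=:\delta(M)$; it gives no control uniform in $M$. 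With \Cref{archBest} capping each individual term at $C_1\log p+O(1)$, the contribution of those $\delta(M)(p+1)$ near-divisor points is only bounded by $\delta(M)(p+1)\cdot C_1\log p$, which is of order $p\log p$, not $O(p)$. The average therefore need not converge to $c_\Psi$, and your intermediate claim ``the sum is $\leq(c_\Psi+1)(p+1)$'' is unsupported. What \emph{is} provable — and is exactly what the theorem asserts and what the paper does — is the weaker bound: first pick $\epsilon$ small, apply equidistribution to obtain a fixed $C_3(\epsilon)<0$ so that fewer than $\epsilon p/C_1$ points have $\log|az_1z_2+\gamma z_1+\gamma'z_2+b|<C_3$; then away from the \Cref{archBest}-exceptional primes the sum $-\sum\log|\cdots|$ is at most $-(p+1)C_3+(\epsilon p/C_1)\cdot C_1\log p=-(p+1)C_3+\epsilon p\log p$; finally absorb the $O(p)$ term for $p$ large. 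Your argument is not salvaged by re-interpreting it, because it explicitly aims for a stronger conclusion ($O(p)$) that the ingredients cannot deliver.

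Two smaller inaccuracies worth flagging. First, you write that when $[B]$ is within $p^{-C_1}$ of $\Div(\Psi)$, a single term $-\log\|\Psi(\sigma([B]))\|_\pet$ ``could be as large as $O(C_1p\log p/\log p)\sim O(p)$''; the correct single-term bound there is $C_1\log p$ (up to a constant from the smooth part of the Green function), not $O(p)$, and the danger is many such points, not one anomalously huge term. Second, your claim that $-\log\|\Psi\|_\pet$ ``is bounded below by an absolute constant on all of $\cH_{\bC}$'' is false: because of the factor $(\Im z_1\Im z_2)^{k/2}$ in $\|\cdot\|_\pet$ and $\Psi$ having no zeros near the cusps, one has $-\log\|\Psi\|_\pet\to-\infty$ at the cusps. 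Fortunately this works \emph{for} you, since what the argument needs (and what the paper proves via the auxiliary smooth function $f$) is that the Green function is bounded \emph{above} on $\overline{\cF}$ away from $\Div(\Psi)$.
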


\begin{proof}
Notation as in \cref{cpx_setup}.
We first show that a fixed triple $(a,b,\gamma)$, 
$$-\sum_{B\in T_{\fp} A}\log |az_1(B)z_2(B)+\gamma z_1(B)+\gamma'z_2(B)+b|=o(p\log p).$$

For any $\epsilon>0$, by the equidistribution theorem of Hecke orbits (see for example \cite{COU}), there exist constants $N_1(\epsilon, C_1)>0$ and $C_3(\epsilon, C_1)<0$ such that for any $p>N_1$, 
$$\#\{[B]\in T_{\fp}[A] : \log |az_1(B)z_2(B)+\gamma z_1(B)+\gamma'z_2(B)+b|<C_3\}<\epsilon p/C_1.$$

Let $\bI'=\{r\in \bI \mid c(r)>0\}$ and $M=\sum_{r\in \bI'}\# \cM_{\Omega,r}$. Taking $\epsilon_3=\epsilon_2/M$ and applying \Cref{archBest} for all triples $(a,b,\gamma)\in \bigcup_{r:c(r)>0}\cM_{\Omega,r}$, we have that, for every $N>N_2(\epsilon_2,C_1)\colonequals \max_{(a,b,\gamma)}\{N_0(a,b,\gamma, \epsilon_3, C_1),N_1\}$,
$$-\sum_{[B]\in T_{\fp} [A]}\log |az_1(B)z_2(B)+\gamma z_1(B)+\gamma'z_2(B)+b|<-(p+1)C_3+(\epsilon p/C_1)\cdot C_1\log p$$
holds for primes $p\in [N^{1/2},N]$ outside a set $\bB_N$ of density $\epsilon_2$ (this set is the union of the exceptional sets for all $(a,b,\gamma)\in \cup_{r:c(r)>0}\cM_{\Omega,r}$).

Let $\phi$ be a smooth function which is $1$ in $\Omega$ with compact support in $\overline{\cF}$.
Then by \cref{cpx_setup}, the function $f=G+\sum_{r\in \bI} c(r) \sum_{(a,b,\gamma)\in \cM_{\Omega,r}} \phi(z)\log |az_1z_2+\gamma z_1+\gamma'z_2+b|$ is smooth on $\overline{\cF}$. Since $G$ and hence $f$ go to $-\infty$ as $y_1y_2$ goes to $\infty$, we see that $f$ is bounded above on $\overline{\cF}$. On the other hand, since $\phi(z)$ has compact support, $\phi(z)\log |az_1z_2+\gamma z_1+\gamma'z_2+b|$ is also bounded above. Therefore, since $\#T_\fp[A]=p+1$,
 we have $$\sum_{[B]\in T_{\fp}[A]}\big (G(z(B))+\sum_{r\in \bI'}c(r)\sum_{(a,b,\gamma)\in\cM_{\Omega,r}} \phi(z)\log |az_1(B)z_2(B)+\gamma z_1(B)+\gamma'z_2(B)+b|\big )<C_2\cdot (p+1).$$

Take $\epsilon=\epsilon_1/(2M)$, then we have, for $N>N_2$, for $p\in [N^{1/2},N]\backslash \bB_N$  ,
$$\sum_{[B]\in T_{\fp}[A]}G(z(B))<C_{4}(p+1)+(\epsilon_1p\log p)/2.$$
Then by taking $N(\epsilon_1,\epsilon_2)>N_2$ large enough so that $C_{4}N(\epsilon_1,\epsilon_2)^{1/2}<(\epsilon_1 N(\epsilon_1,\epsilon_2)^{1/2}\log N(\epsilon_1,\epsilon_2))/4$, the theorem follows.
\end{proof}

\section{Special endomorphisms and contributions at finite places}\label{finite}
In this section, we will bound the local intersection multiplicities of $(T_\fp([\cA]),\cT(r))$ at non-archimedean places for $r\in \bI$, where $\bI$ is a fixed finite set such that $D|r$ and $\cT(r)$ is compact in $\cH$ for all $r\in \bI$. The set $\bI$ will be chosen by \Cref{cpt_div}. Throughout, $\ell$ denotes a prime, and $B,B',\cB,\cB'$ denote abelian surfaces with $\cO_F$-multiplication and $\fa$-polarization. We fix a finite place $v$ of a number field $K$ over $\ell$. Recall that $e_v$ is the ramification degree of $K$ at $v$. The abelian surfaces may be defined over $K,\cO_K$, $\cO_{K_v}$, or $\cO_{K_v}/v^n$. Recall that we use $\cB,\cB'$ to denote abelian surfaces defined over $\cO_{K_v}$ and we use $\cB_{v,n},\cB'_{v,n}$ to denote their reduction modulo $v^n$. We will use $\cB_{\ell,n}$ and $\cB'_{\ell,n}$ to denote surfaces over $\cO_{K_v}/\ell^n$ (note that the abelian varieties are defined modulo $\ell^n$, not $v^n$) which do not \emph{a priori} come with lifts to $\cO_{K_v}$. 
 Let $M_{v,n}$ and $M_{\ell,n}$ denote the module of special endomorphisms of $\cB_{v,n}$ and $\cB_{\ell,n}$ respectively. Finally, we let $\Lambda_v = \End(\cB[\ell^{\infty}]) \cap M_{v,1} \otimes \bZ_{\ell} $, where the intersection takes place in $\End(\cB_{v,1}) \otimes \bZ_{\ell} = \End(\cB_{v,1}[\ell^{\infty}])$. We call the $\bZ_{\ell}$-module $\Lambda_v$ the set of special endomorphisms of $\cB[\ell^{\infty}]$

\subsection{Deformation theory}



The following result is crucial to bounding the local intersections: 

\begin{theorem}\label{numspecial}
 Let $m$ be the $\bZ$-rank of $M_{v,1}$ and $m'$ be the $\bZ_{\ell}$-rank of $\Lambda_v$.  Then there exists a positive integer $n_0$ such that $M_{n'_0 + ke_v,v} = (\Lambda_v + \ell^kM_{v,n'_0}\otimes \bZ_{\ell}) \cap M_{n_0,v}$ where $n'_0 \geq n_0$ and $k$ is allowed to be any positive integer.
\end{theorem}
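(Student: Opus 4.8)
The plan is to study how special endomorphisms of $\cB_{v,1}$ lift, or fail to lift, to $\cB_{v,n}$ for increasing $n$, using Grothendieck--Messing deformation theory. The first step is to translate the problem into $p$-adic Hodge theory: a special endomorphism $s \in M_{v,1}$ lifts to $\cB_{v,n}$ if and only if the corresponding crystalline realization (an endomorphism of the Dieudonn\'e crystal, or equivalently of $\cB[\ell^\infty]$ together with its filtration) is compatible with the Hodge filtration modulo $v^n$. Since special endomorphisms are by definition $\cO_F$-linear (twisted) and fixed by the Rosati involution, and since $p \neq \ell$ splits conveniently, the relevant obstruction lives in a line (the Hilbert modular surface is a curve's worth of deformations in each factor), so the obstruction to lifting $s$ from level $1$ to level $n$ is governed by a single $\ell$-adic valuation. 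Concretely I expect to show: for $s \in \Lambda_v$ (i.e.\ $s$ already defined on the $\ell$-divisible group, not just mod $\ell$), $s$ automatically lifts to every $\cB_{v,n}$; whereas for a general $s \in M_{v,1}$ not in $\Lambda_v$, the multiple $\ell^k s$ lifts to $\cB_{v, n_0 + ke_v}$ for a suitable fixed $n_0$ and all $k$, the factor $e_v$ appearing because deformations are measured in powers of the uniformizer of $\cO_{K_v}$ while the obstruction is naturally $\ell$-adic.

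The key steps, in order, are as follows. (1) Set up Grothendieck--Messing/crystalline Dieudonn\'e theory for $\cB[\ell^\infty]$ over $\cO_{K_v}/v^n$, and identify special endomorphisms of $\cB_{v,n}$ with the sublattice of $\End(\cB[\ell^\infty])$ consisting of $\cO_F$-twisted, Rosati-fixed quasi-endomorphisms that preserve the Hodge filtration modulo $v^n$; here one uses that lifting an endomorphism of an abelian variety mod $v$ to mod $v^n$ is unobstructed precisely when the crystalline realization respects the filtration, by Grothendieck--Messing together with the Serre--Tate / de Jong rigidity for $\ell$-divisible groups away from the residue characteristic (so in fact any endomorphism of $\cB_{v,1}[\ell^\infty]$ comes from $\End(\cB[\ell^\infty])$ canonically). (2) Choose a generator, or a good basis, of the space $V_v := M_{v,1}\otimes\bQ_\ell$ and describe the ``Hodge-filtration-compatibility mod $v^n$'' condition as a congruence: an element $s = \sum a_i s_i$ lies in $M_{v,n}$ iff a certain $\bZ_\ell$-linear functional $\phi_n(s)$ vanishes, where $\phi_n$ is reduction mod $v^n$ of a fixed $\bZ_\ell$-linear (or $\cO_{K_v}$-linear) map $\phi$ whose kernel on the $\ell$-divisible group is exactly $\Lambda_v$. (3) From the structure of $\phi$ deduce: there is $n_0$ with $\ker(\phi_{n_0}) = \Lambda_v \bmod (\text{something})$ stabilized, and for $n' \ge n_0$ multiplying by $\ell^k$ shifts the level by exactly $ke_v$ since $\phi$ is $\cO_{K_v}$-linear and $\ell$ has valuation $e_v$ in $\cO_{K_v}$; combining with the (already available) finite-rank bounds on $M_{v,1}$ gives the stated identity $M_{v, n'_0 + ke_v} = (\Lambda_v + \ell^k M_{v,n'_0}\otimes\bZ_\ell) \cap M_{v,n_0}$. (4) Check the edge cases $m = m'$ (everything already lives on the $\ell$-divisible group, so all $M_{v,n}$ agree for $n \ge n_0$) and handle the intersection with $M_{v,n_0}$ carefully, since $M_{v,n}$ is a priori only a $\bZ$-module and one must pass between $\bZ$- and $\bZ_\ell$-lattices.

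The main obstacle I anticipate is step (2)--(3): making precise and uniform (independent of $k$) the claim that the obstruction to lifting is controlled by a \emph{single} $\ell$-adic parameter with the level growing linearly, i.e.\ that there is no ``higher-order'' or ``non-linear'' growth in the obstruction as $n \to \infty$. This requires genuinely understanding the deformation of the Hodge filtration of $\cB$ along the $1$-parameter (in each Hilbert factor) formal deformation and showing the Kodaira--Spencer map is an isomorphism onto a rank-one piece, so that the ``distance'' of $s$ from $\Lambda_v$ in the $\ell$-divisible group translates into a clean $\ell$-adic valuation bound on when $\ell^k s$ becomes a genuine endomorphism mod $v^{n}$. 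I would isolate this as a lemma about the formal deformation space of $\cB[\ell^\infty]$ with its $\cO_F$-action and $\fa$-polarization, proving that the locus where a given special quasi-endomorphism deforms is cut out by one equation, and then everything else is bookkeeping with $\bZ_\ell$-lattices and the ramification factor $e_v$. A secondary annoyance is that $M_{v,n}$ need not be saturated, so one should be slightly careful when intersecting with $M_{v,n_0}$ and when asserting ranks; I expect this to be routine once the lattice $\Lambda_v$ and the map $\phi$ are in hand.
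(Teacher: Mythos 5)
Your strategy is a genuinely different route from the paper's. The paper first establishes a concrete lifting lemma (its Lemma~\ref{GMes}): for $n \geq 3$, (i) $\ell\alpha$ lifts uniquely from $\Hom(\cB_{\ell,n-1},\cB'_{\ell,n-1})$ to level $n$, and (ii) if $\ell\alpha$ lifts to level $n+1$ then $\alpha$ lifts to level $n$. This comes directly from Grothendieck--Messing with square-zero divided-power thickenings. It then runs a soft lattice argument: choose a direct complement $\Lambda'$ of $\Lambda_v$ inside $\Lambda_{v,2e_v}$, use the vanishing of $\bigcap_n (\Lambda'\cap\Lambda_{v,n})$ and compactness of $\bZ_\ell$-submodules to get $\Lambda' \cap \Lambda_{v,n'+ke_v}\subset \ell(\Lambda'\cap\Lambda_{v,n'})$ for $k$ large, and then closes by a downward induction using Lemma~\ref{GMes}(2). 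You instead propose to build a single obstruction map $\phi$ with $M_{v,n}\otimes\bZ_\ell = \phi^{-1}(v^nT)$ and $\ker\phi = \Lambda_v$, and to extract the theorem from $\cO_{K_v}$-linearity.

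There are two problems. First, a local error: the parenthetical in your step (1), ``so in fact any endomorphism of $\cB_{v,1}[\ell^\infty]$ comes from $\End(\cB[\ell^\infty])$ canonically,'' is false in this setting. Here $v \mid \ell$, so $\ell$ \emph{is} the residue characteristic, and rigidity of $\ell$-divisible groups away from the residue characteristic does not apply. Indeed the whole content of the theorem is that $\Lambda_v$ is generically a proper $\bZ_\ell$-submodule of $M_{v,1}\otimes\bZ_\ell$: a special endomorphism of $\cB_{v,1}$ has a crystalline realization acting on the Dieudonn\'e (iso)crystal, but that map typically fails to preserve the Hodge filtration cutting out $\cB[\ell^\infty]$ over $\cO_{K_v}$, which is precisely what $\phi$ should detect.

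Second, and more substantively, there is a gap at the heart of steps (2)--(3). The $\cO_{K_v}$-linearity of $\phi$ gives you $\Lambda_v + \ell^k\Lambda_{v,n'_0}\subset\Lambda_{v,n'_0+ke_v}$ essentially for free, but the reverse containment requires: for $s$ with $\phi(s)\in v^{n'_0+ke_v}T = \ell^k v^{n'_0}T$, the element $\phi(s)/\ell^k \in v^{n'_0}T$ must lie in $\phi(\Lambda_{v,n'_0})$. This amounts to an $\ell$-saturation statement for the image of $\phi$ restricted to $\Lambda_{v,n'_0}$, and it is simply not automatic for small $n'_0$ (easy examples with $e_v > 1$ already fail for $n'_0$ small); establishing that it holds for some $n_0$ and all $n'_0\geq n_0$ \emph{is} the theorem, and is exactly why an unspecified $n_0$ appears in the statement. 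Your final paragraph flags this as the ``main obstacle'' and defers it to an unproved Kodaira--Spencer-type lemma; as written, invoking $\cO_{K_v}$-linearity to conclude is circular at the decisive step. The paper's Lemma~\ref{GMes}(2) together with the compactness argument (a decreasing chain of closed $\bZ_\ell$-submodules of a finite-rank module with trivial intersection is eventually inside $\ell$ times any fixed term) is precisely the input that supplies the needed stabilization and lets the induction close. If you want your obstruction-map framework to work, you will need to prove this saturation/stabilization directly, and the natural way to do that looks to me essentially like rediscovering the paper's two-part lemma on lifting $\ell\alpha$ versus $\alpha$.
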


In an earlier version of the paper, we had in an earlier version of this paper implicitly assumed that $\Lambda_v = 0$ (this assumption simplified the geometry-of-numbers arguments), and we are very grateful to Davesh Maulik for pointing this out to us.  We will need the following result on homomorphisms between abelian surfaces:
\begin{lemma}\label{GMes}
Let $\alpha \in \Hom(\cB_{\ell,n-1},\cB'_{\ell,n-1})$ for any $n \geq 3$. 
\begin{enumerate}
\item
The homomorphism $\ell\alpha$ lifts uniquely to $\Hom(\cB_{\ell,n},\cB'_{\ell,n})$.
\item
If $\ell \alpha$ lifts to $\Hom(\cB_{\ell,n+1},\cB'_{\ell,n+1})$, then $\alpha$ lifts to $\Hom(\cB_{\ell,n},\cB'_{\ell,n})$.

\end{enumerate}
\end{lemma}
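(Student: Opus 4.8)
The statement is about lifting homomorphisms between truncated abelian schemes modulo powers of $\ell$, and the natural tool is Grothendieck--Messing deformation theory applied to the $\ell$-divisible groups $\cB[\ell^\infty]$ and $\cB'[\ell^\infty]$. Recall that a homomorphism $\alpha$ between abelian schemes over $\cO_{K_v}/\ell^{n-1}$ induces a homomorphism of the associated $\ell$-divisible groups, and by Serre--Tate / Grothendieck--Messing the obstruction to lifting such a homomorphism across the square-zero (after the usual divided-power considerations for $\ell$) thickening $\cO_{K_v}/\ell^n \twoheadrightarrow \cO_{K_v}/\ell^{n-1}$ is measured by whether the induced map on Dieudonn\'e crystals, evaluated on the thickening, respects the Hodge filtrations. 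The key quantitative input is that the PD-structure on the ideal $(\ell^{n-1})/(\ell^n)$ is not quite trivial, but multiplication by $\ell$ kills the relevant Ext/obstruction group: the obstruction to lifting $\alpha$ lands in a group annihilated by $\ell$ (this is the standard ``$\ell$ times any deformation obstruction vanishes'' phenomenon, valid once $n\geq 3$ so that the divided powers $\gamma_i(\ell^{n-1})$ lie in $(\ell^n)$ for all $i\geq 1$ — this is where the hypothesis $n\geq 3$, rather than $n\geq 2$, enters).

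\textbf{Part (1).} First I would set up the Grothendieck--Messing framework: let $D$ and $D'$ be the Dieudonn\'e crystals of $\cB[\ell^\infty]$ and $\cB'[\ell^\infty]$; a homomorphism over $\cO_{K_v}/\ell^{n-1}$ is the same as a crystalline morphism $D\to D'$ together with compatibility of Hodge filtrations at the level $\cO_{K_v}/\ell^{n-1}$. Lifting to level $n$ requires compatibility of filtrations after evaluating on the thickening $\cO_{K_v}/\ell^n$. Given $\alpha$ at level $n-1$, the map $D\to D'$ extends canonically (crystals are insensitive to nilpotent thickenings with PD-structure), and the failure of filtration-compatibility at level $n$ is an element $o(\alpha)$ in a $\Hom$-group tensored with the ideal $(\ell^{n-1})/(\ell^n)$. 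The point is that $o(\ell\alpha) = \ell\cdot o(\alpha)$, and the target group is killed by $\ell$ once $n\geq 3$, so $o(\ell\alpha)=0$ and $\ell\alpha$ lifts. Uniqueness: two lifts of $\ell\alpha$ differ by an element of $\Hom$ valued in $(\ell^{n-1})/(\ell^n)$, again killed by $\ell$; but this difference is itself $\ell$ times... more carefully, the set of lifts is a torsor under a group annihilated by $\ell$, and since we are lifting something already divisible by $\ell$ the ambiguity vanishes — I would argue that any two lifts $\widetilde{\ell\alpha}_1, \widetilde{\ell\alpha}_2$ satisfy $\widetilde{\ell\alpha}_1 - \widetilde{\ell\alpha}_2 \in \ell^{n-1}\Hom(\cB_{\ell,1},\cB'_{\ell,1})$, and by the injectivity of reduction on $\Hom$-modules combined with the divisibility, conclude it is zero. (One standard way: both lifts reduce to $\ell\alpha$, so their difference is $\ell^{n-1}\beta$ for some $\beta$; but $\ell\alpha$ lifts, so by a descent/Nakayama-type argument on the $\Hom$-module the torsor is trivial.)

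\textbf{Part (2).} Now suppose $\ell\alpha$ lifts all the way to level $n+1$; I want $\alpha$ to lift to level $n$. Write $\widetilde{\ell\alpha}$ for the lift at level $n+1$ and consider its reduction at level $n$, call it $\beta_n \in \Hom(\cB_{\ell,n},\cB'_{\ell,n})$, which reduces to $\ell\alpha$ at level $n-1$. The claim is that $\beta_n$ is divisible by $\ell$ in $\Hom(\cB_{\ell,n},\cB'_{\ell,n})$, with quotient a lift of $\alpha$. Equivalently: the obstruction to lifting $\alpha$ (an $\ell$-torsion class $o(\alpha)$) vanishes. The mechanism is that $\ell\cdot o(\alpha) = o(\ell\alpha)$, and the hypothesis gives that $\ell\alpha$ lifts not just to level $n$ but to level $n+1$; I would use the extra level to show $o(\ell\alpha)$ is itself $\ell$-divisible in the relevant module (because lifting to level $n+1$ means the level-$n$ obstruction was ``already resolved''), so $\ell\cdot o(\alpha)$ is both $\ell$-torsion and $\ell$-divisible in a finitely generated $\bZ_\ell$-module, hence... — actually cleaner: since $\ell\alpha$ lifts to level $n+1$, the element $\beta_n$ constructed above, viewed in $\Hom(\cB_{\ell,n},\cB'_{\ell,n})$, is divisible by $\ell$: this is precisely a statement one extracts from Grothendieck--Messing, namely that a homomorphism at level $n$ which is divisible by $\ell$ at level $n-1$ and which lifts to level $n+1$ is divisible by $\ell$ at level $n$, because the Hodge filtration obstruction for the ``quotient by $\ell$'' vanishes. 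Then $\beta_n/\ell$ is the desired lift of $\alpha$.

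\textbf{Main obstacle.} The delicate point is the bookkeeping of divided-power structures and exactly which obstruction/torsor groups are killed by $\ell$ versus by $\ell^2$, and making the ``$\ell \cdot o(\alpha) = o(\ell\alpha)$'' identity and the divisibility arguments precise at the level of Dieudonn\'e modules — in particular verifying that the hypothesis $n\geq 3$ (not $n\geq 2$) is what is needed for the PD-powers of $(\ell^{n-1})$ in $\cO_{K_v}/\ell^n$ to vanish and hence for the crystal to behave as expected. I expect part (2) to be harder than part (1): part (1) is a direct obstruction-vanishing, while part (2) requires turning ``liftability one level higher'' into ``$\ell$-divisibility of a specific homomorphism,'' which needs a careful two-step Grothendieck--Messing analysis rather than a one-step one. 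The injectivity of reduction maps $\Hom(\cB,\cB')\hookrightarrow\Hom(\cB_{\ell,n},\cB'_{\ell,n})$ and standard finiteness of these $\Hom$-modules should be invoked freely, as should the fact (used later in \Cref{numspecial}) that these statements are compatible with the $\cO_F$-action and Rosati involution so that they descend to the modules of special endomorphisms.
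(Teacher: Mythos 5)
Your part (1) is essentially the paper's argument repackaged in obstruction-theoretic language: the paper chooses a splitting $D'_n = F'_n \oplus W_n$ (with $W_n$ the reduction of a complement $W_{n+1}$ at level $n+1$), writes $\alpha_n(v_n) = v'_n + w_n$, and notes that $\alpha_{n-1}$ preserving filtrations forces $w_n \equiv 0 \pmod{\ell^{n-1}}$, so $\ell w_n = 0$; this is exactly your ``$o(\ell\alpha) = \ell\, o(\alpha) = 0$ because the obstruction lives in an $\ell$-torsion module.'' However, your uniqueness discussion is off: lifts of a \emph{morphism} between two already-fixed lifts of $p$-divisible groups do not form a nontrivial torsor in Grothendieck--Messing theory — the crystalline map $\mathbb{D}(\alpha_{n-1})$ evaluated at $\cO_{K_v}/\ell^n$ is the \emph{only} candidate, so once the filtration condition holds the lift is automatically unique by full faithfulness of the Dieudonn\'e--filtration functor. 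The torsor/Nakayama detour is unnecessary and, as stated, not correct.

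The real problem is part (2), where you have a genuine gap. You want to show $\alpha$ lifts to level $n$, and you reduce this to ``$\beta_n$ is divisible by $\ell$ in $\Hom(\pdiv_n,\pdiv'_n)$,'' which you then assert is ``precisely a statement one extracts from Grothendieck--Messing ... because the Hodge filtration obstruction for the quotient by $\ell$ vanishes.'' That sentence is the conclusion restated, not a proof; nothing in your write-up explains \emph{why} that obstruction vanishes, and your earlier attempt (``$\ell\,o(\alpha) = o(\ell\alpha)$ plus the extra level makes $o(\ell\alpha)$ $\ell$-divisible'') does not close the loop, since $o(\ell\alpha)$ is already $0$ by part (1) regardless of the level-$(n+1)$ hypothesis — so no information is extracted. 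The paper's actual argument is a direct two-level linear-algebra computation that you should carry out: lift $v_n\in F_n$ to $v_{n+1}\in F_{n+1}$, write $\alpha_{n+1}(v_{n+1}) = v'_{n+1} + w_{n+1}$ in the splitting $D'_{n+1} = F'_{n+1} \oplus W_{n+1}$; the hypothesis that $\ell\alpha$ lifts to level $n+1$ forces $\ell\alpha_{n+1}(F_{n+1})\subset F'_{n+1}$, hence $\ell w_{n+1} = 0$ in the free $\cO_{K_v}/\ell^{n+1}$-module $W_{n+1}$, hence $w_{n+1}\in \ell^n W_{n+1}$, hence $w_{n+1}\equiv 0\pmod{\ell^n}$, and therefore $\alpha_n(v_n) = v'_{n+1}\bmod \ell^n \in F'_n$. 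This shows directly that $\alpha_n$ preserves filtrations — no need to pass through ``divisibility of $\beta_n$'' — and is the step you left unjustified.
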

Let $\pdiv_i = \cB_{\ell,i}[\ell^{\infty}]$, and $\pdiv'_i = \cB'_{\ell,i}[\ell^{\infty}]$. By the Serre--Tate lifting theorem, it suffices to prove the analogous result for $\ell$-divisible groups. This is a straightforward application of Grothendieck--Messing theory. Before proceeding to the proof, we recall some facts from Grothendieck--Messing theory (\cite{Messing} contains every result that we need). All the reduction maps between the $\cO_{K_v}/\ell^{i}$ for $i = n, n\pm1$ are canonically equipped with nilpotent divided powers (in fact, as $n > 2$, all the ideals in play are square-zero). Let $\bD$ and $\bD'$ denote the Dieudonne-crystals associated to $\pdiv_{n-1}$ and $\pdiv'_{n-1}$ (see \cite[\S 2.5 of Chapter IV]{Messing}). Any homomorphism between $\pdiv_{n-1}$ and $\pdiv'_{n-1}$ canonically induces a map of crystals $\bD \rightarrow \bD'$. 

Let $D_i$ and $D'_i$ denote $\bD$ and $\bD'$ evaluated at $\cO_{K_v}/\ell^i$ for $i = n, n\pm1$ (these are free $\cO_{K_v}/\ell^i$-modules whose ranks equal the heights of $\pdiv$ and $\pdiv'$). Grothendieck--Messing theory associates canonical filtrations $F_i \subset D_i$ and $F'_i \subset D'_i$ to the groups $\pdiv_i$ and $\pdiv'_i$ for $i = n, n\pm1$. Note that the $F_{n+1}$ reduces to $F_n$ and $F_{n-1}$ under the canonical quotient maps (the analogous statement holds for $F'_{n+1}$).  The filtrations are direct summands of the crystals evaluated at the $\cO_{K_v}/\ell^i$. Suppose that $W_{n+1} \subset D'_{n+1}$ is some submodule such that $D'_{n+1} = F'_{n+1} \oplus W_{n+1}$. Let $W_n \subset D'_n$ denote the mod-$\ell^n$ reduction of $W_{n+1}$. Clearly, $W_n \oplus F'_n = D'_n$. By \cite[Theorem 1.6 of Chapter V]{Messing}, a homomorphism between $\pdiv_{n-1}$ and $\pdiv'_{n-1}$ lifts to $\Hom(\pdiv_i,\pdiv'_i)$ (for $i = n,n+1$) if and only if the associated map of crystals evaluated at $\cO_{K_v}/\ell^i$ maps the filtration $F_i$ to $F'_i$. Let $\alpha_i: D_i \rightarrow D'_i$ ($i = n, n\pm1$) denote the maps induced by $\alpha$. We now proceed to the proofs of the two statements. 

\begin{proof}[Proof of \Cref{GMes}]

\ \begin{enumerate}
\item
Let $v_n \in F_n \subset D_n$, whose image in $F_{n-1}$ is denoted by $v_{n-1}$. It suffices to prove that $\ell \alpha_n(v) \in F'_n$. Let $\alpha_n(v) = v_n + w_n$, where $v'_n \in F'_n$ and $w_n \in W_n$. As $\alpha_{n-1}(F_{n-1}) \subset F'_{n-1}$, we have $w_n$ modulo $\ell^{n-1}$ is zero. It follows that $\ell w_n = 0$. Therefore, $\ell \alpha_n(v_n) = \ell v'_n \in F'_n$. Thus $\ell \alpha_n$ preserves filtrations, as requried. 

\item
As above, let $v_n \in F_n$. Let $v_{n+1} \in F_{n+1}$, whose mod-$\ell^n$ reduction is $v_n$. Suppose that $\alpha_{n+1}(v_{n+1}) = v'_{n+1} + w_{n+1}$, where $v'_{n+1} \in F'_{n+1}$ and $w_{n+1} \in W'_{n+1}$. As $\ell \alpha$ lifts to $\Hom(\pdiv_{n+1}, \pdiv'_{n+1})$, it follows that $\ell w_{n+1} = 0$. Therefore, $w_{n+1} =0$ modulo $\ell^n$. It follows that $\alpha_{n+1}(v_{n+1})$ modulo $\ell^n$ - which equals $\alpha_n(v_n)$ - is an element of $F'_n$. 

\end{enumerate}
\end{proof}

We now prove \Cref{numspecial} 
\begin{proof}[Proof of \Cref{numspecial}]
For ease of notation, denote by $\Lambda_{v,i}$ the $\bZ_{\ell}$-module $M_{v,i} \otimes \bZ_{\ell}$. 
By the Serre--Tate theorem, it suffices to prove the existence of $n_0$ such that $\Lambda_{v,n'_0 + ke_v}= \Lambda_v + \ell^k\Lambda_{v,n'_0}$. First, note that \Cref{GMes} implies that $\Lambda_v \subset \Lambda_{v,2e_v}$ is co-torsion free. Let $\Lambda' \subset \Lambda_{v,2e_v}$ denote a direct summand of $\Lambda_v$. As the $\bZ_\ell$-module of special endomorphisms of $\cB[\ell^{\infty}] = \Lambda_v$, it follows that  Therefore, $\bigcap_n (\Lambda' \cap \Lambda_{v,n}) = 0$. 

The theorem follows directly from the following claim.
\begin{claim}
We have that $\Lambda' \cap \Lambda_{v,n + e_v}\subset \Lambda' \cap \ell \Lambda_{v,n} $ for large enough $n$.
\end{claim}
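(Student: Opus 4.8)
The plan is to use \Cref{GMes} to run a descent on lifts of special endomorphisms. The key observation is that, over $\cO_{K_v}/\ell^n$, a class of a special endomorphism that survives mod $\ell^{n+e_v}$ must have come from something $\ell$-divisible mod $\ell^n$, up to bounded error. First I would reduce the claim, via the Serre--Tate theorem, to a statement about lifting homomorphisms of $\ell$-divisible groups $\pdiv_i = \cB_{\ell,i}[\ell^\infty]$; here the subtlety is that the levels $\cO_{K_v}/v^n$ and $\cO_{K_v}/\ell^n$ differ by the ramification index $e_v$, which is why the shift $e_v$ (and not $1$) appears. So I want to show: if $s \in \Lambda'$ lifts to a special endomorphism of $\cB_{v,n+e_v}$, then $s = \ell t$ for some special endomorphism $t$ of $\cB_{v,n}$, once $n$ is large.

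The mechanism is the contrapositive of \Cref{GMes}(2): reduction mod $v^{n+e_v}$ is (up to units) reduction mod $\ell^{n+1}$ relative to mod $\ell^n$, so if $\alpha$ is a special endomorphism mod $\ell^n$ and $\ell\alpha$ lifts mod $\ell^{n+1}$, then $\alpha$ itself lifts mod $\ell^n$. Applying this repeatedly: given $s \in \Lambda' \cap \Lambda_{v,n+e_v}$, I regard $s$ as a special endomorphism mod $\ell^{n+1}$ (after clearing the ramification); writing $s = \ell\alpha$ for some $\alpha$ defined only mod $\ell^n$ (which exists since $s$ is $\ell$-divisible in the ambient $\End$-lattice — or rather, I must first argue $s$ is divisible by $\ell$ in $\Lambda_{v,n}$, which is where the hypothesis $s \in \Lambda'$, a direct summand complementary to the $\ell$-divisible part $\Lambda_v$, and $\bigcap_n(\Lambda' \cap \Lambda_{v,n}) = 0$ enter), I deduce $\alpha$ lifts mod $\ell^n$, hence $s = \ell\alpha \in \ell\Lambda_{v,n}$, i.e. $s \in \Lambda' \cap \ell\Lambda_{v,n}$ as desired. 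One then runs the analogous argument with $\cB_{\ell,\bullet}$ replaced by $\cB'_{\ell,\bullet} = \cB_{\ell,\bullet}$ to handle the Rosati-fixed and $\cO_F$-commuting conditions, noting these conditions are preserved under reduction and, being closed conditions, descend along the lift.

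The main obstacle I anticipate is the bookkeeping around why $s$ should be divisible by $\ell$ in $\Lambda_{v,n}$ to begin with — without this, \Cref{GMes}(2) has nothing to chew on. The point is that $\Lambda'$ is chosen to be a direct-sum complement of $\Lambda_v = \End(\cB[\ell^\infty]) \cap M_{v,1}\otimes\bZ_\ell$ inside $M_{v,2e_v}\otimes\bZ_\ell$; since $\Lambda_v$ is exactly the part that lifts $\ell$-divisibly (by \Cref{GMes}(1), $\ell$ times anything at a lower level lifts), an element of $\Lambda' \setminus \ell\Lambda'$ that is \emph{not} $\ell$-divisible at level $n$ must die at some bounded level, and $\bigcap_n(\Lambda'\cap\Lambda_{v,n}) = 0$ makes "bounded" uniform. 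Concretely I would argue: the image of $\Lambda'$ in $\Lambda_{v,n}/\ell\Lambda_{v,n}$ has bounded size, so for $n$ large the filtration of $\Lambda'$ by $\Lambda' \cap \ell^j\Lambda_{v,n}$ stabilizes, giving that any $s \in \Lambda'\cap\Lambda_{v,n+e_v}$ which is primitive in $\Lambda'$ already lies in $\ell\Lambda_{v,n}$. Combined with \Cref{GMes} this yields the claim, and then \Cref{numspecial} follows by iterating the claim in steps of $e_v$, identifying $\Lambda_v$ as the stable part $\bigcap_k \ell^k\Lambda_{v,n} + (\text{lifts from }\cB[\ell^\infty])$, which is precisely the asserted formula $M_{n'_0+ke_v,v} = (\Lambda_v + \ell^k M_{v,n'_0}\otimes\bZ_\ell)\cap M_{n_0,v}$.
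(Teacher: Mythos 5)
Your high-level strategy — use $\bigcap_n(\Lambda'\cap\Lambda_{v,n})=0$ to extract an $\ell$-divisibility statement, then lift it via Grothendieck–Messing — is the right one, and it is the logical contrapositive of what the paper does (the paper assumes the Claim fails and descends by \Cref{GMes}(2) to reach $\alpha\notin\ell(\Lambda'\cap\Lambda_{v,n'})$, contradicting the eventual vanishing). However, the paragraph where you try to make the divisibility step concrete has a real gap, and your statement of \Cref{GMes}(2) has the levels shifted by one (the hypothesis is that $\alpha$ is defined mod $\ell^{n-1}$ and $\ell\alpha$ lifts mod $\ell^{n+1}$; the conclusion is only that $\alpha$ lifts mod $\ell^n$).

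The gap is this. From $\bigcap_n(\Lambda'\cap\Lambda_{v,n})=0$ you get, for $n$ large, $s\in\Lambda'\cap\Lambda_{v,n+e_v}\subset\ell^N\Lambda'$ for any fixed $N$; so $s=\ell\beta$ with $\beta\in\Lambda'\subset\Lambda_{v,2e_v}$. But that places $\beta$ at the very low level $2e_v$, not anywhere near level $n$, and $\ell^N\Lambda'$ is in general \emph{not} contained in $\ell\Lambda_{v,n}$ — one cannot compare $\ell$-powers of the small lattice $\Lambda_{v,2e_v}$ with $\ell\Lambda_{v,n}$ without already knowing the quantitative decay that the Claim is trying to prove. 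Your ``image of $\Lambda'$ in $\Lambda_{v,n}/\ell\Lambda_{v,n}$'' is also not well-posed, since $\Lambda'\subset\Lambda_{v,2e_v}$ need not lie inside $\Lambda_{v,n}$; and ``any $s$ primitive in $\Lambda'$ already lies in $\ell\Lambda_{v,n}$'' is vacuous once $\Lambda'\cap\Lambda_{v,n+e_v}\subset\ell\Lambda'$, so it says nothing about the imprimitive $s$ you actually need to control. The missing ingredient is to \emph{iterate} \Cref{GMes}(2): since $\beta\in\Lambda_{v,2e_v}$ and $\ell\beta=s\in\Lambda_{v,n+e_v}$, one gets $\beta\in\Lambda_{v,3e_v}$, then $\Lambda_{v,4e_v}$, and so on up to $\beta\in\Lambda_{v,n}$, whence $s=\ell\beta\in\ell\Lambda_{v,n}$. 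Once you insert this bootstrapping loop your argument closes and is a legitimate forward-direction rendition of the paper's backward descent; without it, the divisibility in $\ell\Lambda_{v,n}$ is simply asserted, not proved.
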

To prove the claim, we fix any $n' > 2e_v$. Since $\bigcap_n (\Lambda' \cap \Lambda_{v,n}) = 0$, then $\Lambda' \cap \Lambda_{v,n' + ke_v}\subset \ell (\Lambda' \cap \Lambda_{v,n'})$ for large enough $k$. 
We now prove by contradiction that $\Lambda' \cap \Lambda_{v,n'+(k+1)e_v}\subset \ell(\Lambda' \cap \Lambda_{v,n'+ke_v})$ for such $k$. Assume that there exists a special endomorphism $\alpha \in (\Lambda' \cap \Lambda_{v,n'+(k+1)e_v})\backslash \ell(\Lambda' \cap \Lambda_{v,n'+ke_v})$. If $\alpha \in \ell (\Lambda' \cap \Lambda_{v,n'+(k-1)e_v})$, we write $\alpha=\ell \beta$, where $\beta \in \Lambda' \cap \Lambda_{v,n'+(k-1)e_v}$. By assumption, $\ell\beta\in \Lambda' \cap \Lambda_{v,n'+(k+1)e_v}$ and then by \Cref{GMes}, $\beta\in \Lambda' \cap \Lambda_{v,n'+ke_v}$. This contradicts that $\alpha\notin \ell(\Lambda' \cap \Lambda_{v,n'+ke_v})$ and hence we have shown that $\alpha \notin \ell(\Lambda' \cap \Lambda_{v,n'+(k-1)e_v})$. By iterating this argument, it follows that $\alpha \notin \ell(\Lambda' \cap \Lambda_{v,n'})$, which is a contradiction.
\end{proof}

\subsection{Geometry of numbers and applications to counting special endomorphisms}
For an $m$-dimensional lattice $M$ with a positive definite quadratic form $Q$, let $\mu_1(M) \leq \mu_2(M) \hdots \leq \mu_m(M)$ denote the successive minima of $M$ (see \cite[Definition 2.2]{Esk} for the definition of the term successive minima). We will need the following lemma due to Schmidt: 

\begin{lemma}\label{dav}
Then $\#\{s \in M \mid Q(s) \leq N \} = O\left(\displaystyle{\sum_{j = 0}^m \frac{N^{j/2}}{\mu_1(M) \hdots \mu_{j}(M)}}\right)$, where the implied constant depends only on $m$.
\end{lemma}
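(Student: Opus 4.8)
The statement is the classical counting bound for lattice points inside a ball, phrased via successive minima; the standard reference is Schmidt's work on integral points, and the natural way to prove it is via Minkowski's reduction theory. First I would invoke Minkowski's second theorem on successive minima to reduce the count to an estimate in a well-chosen reduced basis. Concretely, let $b_1,\dots,b_m$ be a Minkowski-reduced basis of $M$ with respect to $Q$, so that $Q(b_j)^{1/2}$ is comparable (up to a constant depending only on $m$) to $\mu_j(M)$. Writing an arbitrary $s = \sum_j x_j b_j$ with $x_j \in \bZ$, the key geometric input from reduction theory is that there is a constant $c_m > 0$ with
\[
Q(s) \geq c_m \sum_{j=1}^m x_j^2 \, \mu_j(M)^2 ,
\]
i.e. the quadratic form, after the reduced change of basis, dominates a diagonal form with diagonal entries $\mu_j(M)^2$ up to the dimensional constant. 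This is exactly where Minkowski reduction is used and is the only nontrivial ingredient.

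\textbf{Main step.} Granting that inequality, the condition $Q(s) \leq N$ forces $|x_j| \leq C_m N^{1/2}/\mu_j(M)$ for each $j$, where $C_m$ depends only on $m$. Hence the number of admissible integer vectors $(x_1,\dots,x_m)$ is at most
\[
\prod_{j=1}^m \left( 1 + \frac{2C_m N^{1/2}}{\mu_j(M)} \right).
\]
Expanding this product gives a sum of $2^m$ terms, each of the form $(2C_m)^{|S|} N^{|S|/2} \big/ \prod_{j \in S}\mu_j(M)$ over subsets $S \subseteq \{1,\dots,m\}$. Now I would observe that since $\mu_1(M) \leq \mu_2(M) \leq \cdots \leq \mu_m(M)$, for a subset $S$ of size $j$ the product $\prod_{i\in S}\mu_i(M)$ is at least $\mu_1(M)\cdots\mu_j(M)$, so the term indexed by $S$ is bounded by $(2C_m)^j N^{j/2}/(\mu_1(M)\cdots\mu_j(M))$. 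Grouping the $2^m$ terms by cardinality $j = |S|$ (there are $\binom{m}{j}$ of them) absorbs the combinatorial factor into the implied constant, yielding
\[
\#\{s\in M : Q(s)\leq N\} \;\leq\; O\!\left( \sum_{j=0}^m \frac{N^{j/2}}{\mu_1(M)\cdots\mu_j(M)} \right),
\]
with the implied constant depending only on $m$, as claimed. (The $j=0$ term, interpreted as the empty product, contributes the constant $1$ accounting for $s=0$ and the additive $+1$'s in the product bound.)

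\textbf{Expected obstacle.} Essentially all the content is in the reduction-theory inequality $Q(s) \geq c_m \sum_j x_j^2 \mu_j(M)^2$ for a Minkowski-reduced basis; once that is in hand the rest is bookkeeping. If one prefers to avoid quoting Minkowski reduction in this precise diagonal-domination form, an alternative is to proceed inductively on $m$: pick a shortest vector $b_1$ (length $\mu_1$), project $M$ orthogonally onto $b_1^\perp$ to get a rank-$(m-1)$ lattice whose successive minima are controlled (up to a dimensional constant) by $\mu_2,\dots,\mu_m$, slice the ball $\{Q\leq N\}$ into $O(1 + N^{1/2}/\mu_1)$ hyperplane slices parallel to $b_1^\perp$, and apply the inductive hypothesis to each slice. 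Either route works; since this lemma is attributed to Schmidt, in the writeup I would simply cite the reference and sketch only the reduction-theory step, which is the crux. No step here interacts with the arithmetic geometry of the rest of the paper — this is a purely lattice-theoretic input — so the only real care needed is making sure every constant depends on $m$ alone and not on the lattice $M$.
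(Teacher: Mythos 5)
Your argument is correct, and it supplies a self-contained proof where the paper merely points to the literature: the paper's entire proof of this lemma is the remark that equations (5) and (6) on p.~518 of \cite{Esk}, together with Lemma~2.4 of that reference (attributed in turn to \cite{Sch}), imply the stated bound. The content you are filling in is thus precisely what those references prove. Your outline is sound: the crux really is the diagonal-domination inequality $Q\bigl(\sum_j x_j b_j\bigr) \geq c_m \sum_j x_j^2\,\mu_j(M)^2$ for a Minkowski-reduced basis $b_1,\dots,b_m$, and the remaining bookkeeping with the expanded product $\prod_j\bigl(1 + 2C_m N^{1/2}/\mu_j(M)\bigr)$ and the monotonicity $\mu_1(M)\le\cdots\le\mu_m(M)$ is exactly right. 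One small attribution fix: that inequality is not a consequence of Minkowski's second theorem (which says $\prod_j\mu_j(M)$ is comparable to the covolume of $M$ up to $m$-dependent constants); it is a separate classical theorem of reduction theory, going back to Minkowski with explicit constants due to van der Waerden and Weyl (see, e.g., Cassels, \emph{An Introduction to the Geometry of Numbers}, Ch.~XII), and the same theorem is also what gives $Q(b_j)^{1/2}$ comparable to $\mu_j(M)$. Your alternative slicing-and-induction route is the other standard proof and also works, with the caveat that controlling the successive minima of the projected lattice again requires a reduction-theoretic input. Either way every constant depends only on $m$, as the lemma requires, and both are legitimate substitutes for the paper's citation.
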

\begin{proof}
Equations (5) and (6) on page 518 of \cite{Esk} imply that Lemma 2.4 of \emph{loc. cited} implies the stated result (the authors refer to \cite{Sch} for a proof of Lemma 2.4). 
\end{proof}
We now prove a elementary lemma that will allow us to use \Cref{dav} to bound special endomorphisms of $B_{v,n}$. We refer to the beginning of \S \ref{finite} for notation. 
\begin{lemma}\label{longshortvector}
Let $m$ be the $\bZ$-rank of $M_{v,1}$ and $m'$ be the $\bZ_{\ell}$-rank of $\Lambda_v$. Then $\displaystyle{\prod_{i=1}^j \mu_i(M_{v,n})} \gg \ell^{n(j - m')/e_v}$.
\end{lemma}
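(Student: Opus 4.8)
The plan is to reduce the lower bound on the product of successive minima $\prod_{i=1}^{j}\mu_i(M_{v,n})$ to a counting statement, and then feed in \Cref{numspecial} to control how special endomorphisms survive modulo higher powers of $v$. First I would recall that $\mu_i(M_{v,n})$ is the successive minimum with respect to the quadratic form $Q$, which on any $M_{\bullet}$ satisfies $Q(s)=\Deg(s)^{1/2}$ and is positive definite. The basic mechanism is: a short special endomorphism of $\cB_{v,n}$, i.e.\ one of small $Q$-value, is in particular a short special endomorphism of $\cB_{v,n'_0}$ for any $n'_0\le n$; but \Cref{GMes} (via \Cref{numspecial}) forces that any element of $M_{v,n}$ which has small enough height, relative to how large $n$ is, must in fact be divisible by a large power of $\ell$ inside $M_{v,n'_0}$ — equivalently it must come from $\Lambda_v$ up to a large $\ell$-power correction. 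The upshot is that among $\mu_1(M_{v,n}),\dots,\mu_m(M_{v,n})$, at most $m'$ of them can be ``genuinely short'': the remaining $m-m'$ directions, being orthogonal-ish to $\Lambda_v$, can only be realized by endomorphisms that are highly $\ell$-divisible modulo $v^{n'_0}$, hence have $Q$-value at least $\ell^{(n-n'_0)/e_v}$ times a constant.

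Concretely, here is the sequence of steps. (1) Fix $n_0$ and $n'_0\ge n_0$ from \Cref{numspecial}; write $n=n'_0+ke_v+t$ with $0\le t<e_v$, so $k\sim n/e_v$. (2) Apply \Cref{numspecial}: $M_{v,n}=(\Lambda_v+\ell^k M_{v,n'_0}\otimes\bZ_\ell)\cap M_{v,n_0}$. Intersecting with $\bQ\otimes M_{v,n}$ and choosing a complement $\Lambda'$ of $\Lambda_v$ inside $M_{v,n'_0}$ of rank $m-m'$, deduce that $M_{v,n}\subset \Lambda_v + \ell^k(\Lambda'\otimes\bZ_\ell \oplus \Lambda_v)$ componentwise, so that the projection of $M_{v,n}$ to $\Lambda'\otimes\bQ_\ell$ lands inside $\ell^k\Lambda'$. (3) Now take $j$ linearly independent vectors $s_1,\dots,s_j\in M_{v,n}$ realizing $\mu_1,\dots,\mu_j$. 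Since $\Lambda_v$ has rank $m'$, among these $j$ vectors at least $j-m'$ have nonzero image in $\Lambda'\otimes\bQ_\ell$, and by step (2) that image is divisible by $\ell^k$; since $Q$ (and any compatible norm on $M_{v,n_0}\otimes\bR$) is comparable to the norm induced by the projection to $\Lambda'$ on the orthogonal complement of $\Lambda_v\otimes\bR$, each such $s_i$ has $Q(s_i)\gg \ell^{2k}$, i.e.\ $\mu_i(M_{v,n})\gg \ell^{k}\gg \ell^{n/e_v}$ up to the bounded factor $\ell^{-(n'_0+t)/e_v}$. (4) The remaining at most $m'$ of the $\mu_i$ are bounded below by the absolute constant $\mu_1(M_{v,n})\ge \mu_1(M_{v,1})$ (reduction does not decrease $Q$ — indeed the reduction maps on endomorphism modules are injective, as used in \Cref{notsimple}), so they contribute a factor $\gg 1$. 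Multiplying, $\prod_{i=1}^{j}\mu_i(M_{v,n})\gg \ell^{k(j-m')}\gg \ell^{n(j-m')/e_v}$, which is the claim.

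The main obstacle I anticipate is bookkeeping at two points: first, making precise the comparison between the quadratic form $Q$ on $M_{v,n_0}\otimes\bR$ and the ``projection to $\Lambda'$'' norm — one needs that $\Lambda_v\otimes\bR$ and its $Q$-orthogonal complement are at bounded distance from the $\bR$-span of $\Lambda'$, which is fine because $M_{v,n_0}$ is a \emph{fixed} lattice (independent of $n$) so all the implied constants are genuinely absolute once $v$ is fixed; second, handling the ramification: the statement of \Cref{numspecial} only controls $M_{v,n}$ when $n\equiv n'_0\pmod{e_v}$, so for general $n$ I would pass to the largest $n''\le n$ with $n''\equiv n'_0\pmod{e_v}$, note $M_{v,n}\subset M_{v,n''}$ hence $\mu_i(M_{v,n})\ge \mu_i(M_{v,n''})$, and absorb the loss $n-n''<e_v$ into the implied constant. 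A minor subtlety is the edge case $j\le m'$, where the claimed exponent $n(j-m')/e_v$ is $\le 0$ and the bound is just $\prod\mu_i\gg 1$, which holds trivially; and the case where $\Lambda_v=0$ (so $m'=0$), which is the simpler situation the authors mention was treated in the earlier version. I would organize the proof to dispatch $j\le m'$ first, then do the general argument for $j>m'$.
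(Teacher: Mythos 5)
Your step (4) — the deduction that any $s\in M_{v,n}$ with nonzero image in $\Lambda'\otimes\bQ_\ell$ must satisfy $Q(s)\gg\ell^{2k}$ — is not correct, and the problem is more than bookkeeping. The decomposition $s=\lambda+\ell^k\mu$ furnished by \Cref{numspecial} is $\ell$-adic: $\lambda\in\Lambda_v$ and $\mu\in M_{v,n'_0}\otimes\bZ_\ell$ live over $\bZ_\ell$, while $Q$ is the Archimedean form on $M_{v,n_0}\otimes\bR$. No orthogonal projection over $\bR$ ``sees'' this decomposition, because $\Lambda_v$ is a $\bZ_\ell$-module; the object $\Lambda_v\otimes\bR$ you invoke is not defined, and the naturally associated $\bQ$-space $(\Lambda_v\otimes\bQ_\ell)\cap(M_{v,n_0}\otimes\bQ)$ can have dimension strictly less than $m'$, even zero. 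Concretely, take $m=2$, $m'=1$, $M_{v,n_0}=\bZ^2$ with $Q(a,b)=a^2+b^2$, and $\Lambda_v=\bZ_\ell(e_1+\alpha e_2)$ with $\alpha\in\bZ_\ell\setminus\bQ$; then (up to bounded shifts in $n$) \Cref{numspecial} gives $M_{v,n_0+ke_v}=\{(a,b)\in\bZ^2:\,b\equiv\alpha a\ (\mathrm{mod}\ \ell^k)\}$. Every nonzero vector here has nonzero $\Lambda'$-image divisible by $\ell^k$, yet $\mu_1\ll\ell^{k/2}$ by Minkowski, and $\mu_2\not\gg\ell^k$ (since $\mu_1\mu_2\asymp\ell^k$, the inequality $\mu_2\gg\ell^k$ would force $\mu_1\ll1$, contradicting $\bigcap_n M_{v,n}=\Lambda_v\cap\bZ^2=0$). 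So your assertion that at least $j-m'$ of the successive minima are $\gg\ell^k$ is false in general: the $\ell$-adic valuation of the $\Lambda'$-component controls divisibility, not Archimedean size.

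The paper avoids this mismatch by letting the $\ell$-adic input enter only through the discriminant. From \Cref{numspecial} one gets $d(M_{v,n+e_v})\geq\ell^{m-m'}d(M_{v,n})$, hence $\prod_{i=1}^m\mu_i(M_{v,n})\gg\ell^{k(m-m')}$ by Minkowski; separately the inclusion $\ell M_{v,n}\subset M_{v,n+e_v}$ gives the uniform \emph{upper} bound $\mu_i(M_{v,n})\ll\ell^{k}$ for every $i$, so $\prod_{i=j+1}^m\mu_i\ll\ell^{k(m-j)}$, and dividing yields $\prod_{i=1}^{j}\mu_i\gg\ell^{k(j-m')}$. Your preliminary reductions (dispatching $j\leq m'$, passing to $n\equiv n'_0\ (\mathrm{mod}\ e_v)$, and using $\mu_i(M_{v,n})\geq\mu_i(M_{v,n'})$ for $n\geq n'$) are fine and consistent with the paper; it is steps (3)--(4) that must be replaced by this discriminant argument, rather than an attempt to bound individual minima from below.
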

\begin{proof}
We may assume that $m' <j$. It suffices to prove that $\displaystyle{\prod_{i=1}^j \mu_i(M_{v,n_0 + (k+1)e_v}) \geq \ell^{j - m'}\prod_{i=1}^j \mu_i(M_{v,n_0 + ke_v})}$. 

For a lattice $M$, let $d(M)$ denote the square root of its discriminant. \Cref{numspecial} implies that $d(M_{n_0 + (k+1)e_v }) \geq \ell^{m-m'} d(M_{n_0 + ke_v})$. Thus, 
\begin{equation}\label{j=m}
\displaystyle{\prod_{i=1}^m \mu_i(M_{v,n_0 + (k+1)e_v}) \geq \ell^{m'}\prod_{i=1}^m \mu_i(M_{v,n_0 + ke_v})}
\end{equation} by \cite{Esk}*{Eqn.~(5),(6)}. This is the desired result for $j=m$. 

Moreover, if $M \subset M'$ are lattices, then $\mu_i(M) \geq \mu_i(M')$. Therefore, \Cref{numspecial} implies that $\mu_i(M_{n_0 + (k+1)v}) \leq \mu_i(\ell M_{n_0 + kv})=\ell \mu_i(M_{n_0 + kv})$. The lemma follows from multiplying (\ref{j=m}) with the inequality $\displaystyle{\prod_{i=j+1}^m\mu_i(M_{v,n_0 + (k+1)e_v})^{-1} \geq \prod_{i=j+1}^m \ell^{-1}\mu_i(M_{v,n_0 + ke_v})^{-1}}$. 
\end{proof}

\Cref{dav} and \Cref{longshortvector} immediately yield the following corollary: 
\begin{corollary}\label{onedim}
Suppose that the $\bZ_{\ell}$-rank of \revise{$\Lambda_v$} is $\leq 1$ and the rank of $M_{v,1}$ is $m$. Then $$\# \{s\in M_{v,n}\mid Q(s) \leq N\} = O\Big(N^{1/2}+ \displaystyle{\sum_{j = 2}^{m} \frac{N^{j/2}}{\ell^{(j-1)n/e_v}}} \Big) $$.
\end{corollary}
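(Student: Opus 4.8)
The plan is to combine the two preceding results mechanically. We want to estimate $\#\{s \in M_{v,n} \mid Q(s) \leq N\}$. By \Cref{dav}, this count is $O\left(\sum_{j=0}^m \frac{N^{j/2}}{\mu_1(M_{v,n}) \cdots \mu_j(M_{v,n})}\right)$, where the implied constant depends only on $m$. The $j=0$ term contributes $O(1)$ and the $j=1$ term contributes $O(N^{1/2}/\mu_1(M_{v,n})) = O(N^{1/2})$ since $\mu_1 \geq 1$ (the quadratic form $Q$ is positive definite and integer-valued, so the shortest nonzero vector has length at least $1$; note $M_{v,n}$ is a genuine lattice so $\mu_1 > 0$). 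Hence these two terms together give $O(N^{1/2})$.

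For the terms with $j \geq 2$, I would invoke \Cref{longshortvector} with $m' \leq 1$: it gives $\prod_{i=1}^j \mu_i(M_{v,n}) \gg \ell^{n(j-m')/e_v} \geq \ell^{n(j-1)/e_v}$, using that $j - m' \geq j - 1$ when $m' \leq 1$, and that $\ell^{n(j-m')/e_v}$ is increasing in $j - m'$ (as $\ell, n, e_v$ are all positive). Substituting this lower bound into the denominators of the $j \geq 2$ terms of the sum from \Cref{dav} yields
\[
\sum_{j=2}^m \frac{N^{j/2}}{\mu_1(M_{v,n}) \cdots \mu_j(M_{v,n})} = O\left(\sum_{j=2}^m \frac{N^{j/2}}{\ell^{(j-1)n/e_v}}\right).
\]
Combining with the $O(N^{1/2})$ from the low-order terms gives exactly the claimed bound $O\left(N^{1/2} + \sum_{j=2}^m \frac{N^{j/2}}{\ell^{(j-1)n/e_v}}\right)$.

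Since everything is a direct substitution, there is no real obstacle; the only points requiring a moment of care are (i) confirming $\mu_1(M_{v,n}) \geq 1$ so that the $j=1$ term is genuinely $O(N^{1/2})$ with no dependence on $\ell$ or $n$, and (ii) checking the inequality $j - m' \geq j - 1$ goes the right way when plugged into the exponent of $\ell$ in the denominator (a larger exponent makes the term smaller, which is what we want for an upper bound). Both are immediate. One should also note that the implied constants depend only on $m$ (from \Cref{dav}) and on the absolute constant in \Cref{longshortvector}, hence on $m$ alone, consistent with the statement.
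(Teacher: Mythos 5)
Your proof is correct and follows exactly the route the paper intends: the paper states that \Cref{dav} and \Cref{longshortvector} "immediately yield" the corollary, and your write-up simply spells out the substitution, including the observation that $m' \leq 1$ makes $j - m' \geq j - 1$ and that $\mu_1 \geq 1$ (since $Q$ is integral and positive definite) absorbs the $j = 0, 1$ terms into $O(N^{1/2})$. One minor imprecision: the implied constant in \Cref{longshortvector} is not absolute (it depends on $\ell$, $v$, and the lattice at level $n_0$, not just on $m$), but since the corollary's $O(\cdot)$ makes no uniformity claim this does not affect correctness.
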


\subsection{Proof of the non-archimidean local results}
In what follows, we consider $T_{\fp}$ as in \cref{phecke} with $p\neq \ell_v$ and recall that we write $(p)=\fp \fp'\subset \cO_F$. We always use $N$ to denote a large enough integer.
\begin{lemma}\label{hecktate}
Over $\bZ[1/pr]$, we have $T_{\fp}\cT(r)=\cT(pr)=T_{\fp'}\cT(r)$. Moreover, for any $n$, if there exists $[\cB]\in T_{\fp}[\cA]$ such that $[\cB_{v,n}] \in \cT(r)$, then $[\cA_{v,n}]\in \cT(pr)$.
\end{lemma}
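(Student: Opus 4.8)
\textbf{Proof plan for Lemma \ref{hecktate}.}

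The plan is to handle the two assertions separately, the first being an identity of horizontal $1$-cycles (or correspondences) over $\bZ[1/pr]$ and the second a statement about the interaction of Hecke correspondences with reduction modulo $v^n$. For the first assertion, I would work with the moduli interpretation of $T_\fp$ provided by \Cref{Tpmoduli} together with the moduli interpretation of $\cT(r)$ (really $\cT(Dr)$, but $D\mid r$ here) provided by \Cref{HZ=spEnd} and \Cref{notsimple}: a point of $\cT(r)$ parametrizes an abelian surface $\cB$ carrying a special endomorphism $s$ with $Q(s)=(r/D)\Nm\fa$ (matching the normalization in \Cref{HZ=spEnd}). Given $[\cB]\in T_\fp[\cA]$, so that $\cB = \cA/C$ for an order-$p$ subgroup $C\subset \cA[\fp]$, I would observe that the isogeny $\phi\colon \cA\to\cB$ and its dual fit together with the special endomorphism to produce a special endomorphism on the other surface; concretely, if $s$ is a special endomorphism of $\cA$, then $\phi\circ s\circ\hat\phi$ (suitably normalized) is a special endomorphism of $\cB$ whose quadratic value is multiplied by $\deg\phi = p$. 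This shows $T_\fp$ carries $\cT(r)$ into $\cT(pr)$. Conversely, using that $T_{\fp'}$ is the Hecke correspondence adjoint to $T_\fp$ (since $\fp\fp'=(p)$ and $p$ acts as a genuine isogeny), composing with $T_{\fp'}$ recovers multiplication by $p$, which is an isomorphism on the relevant special-endomorphism lattices after inverting $p$; a degree count (both $T_\fp\cT(r)$ and $\cT(pr)$ are finite flat of the appropriate degree over $\cT(r)$, by \Cref{Tpmoduli} and properness of $\bcH$) then upgrades the inclusion to an equality over $\bZ[1/pr]$. The symmetric statement $T_{\fp'}\cT(r)=\cT(pr)$ follows by interchanging the roles of $\fp$ and $\fp'$.

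For the second assertion, suppose $[\cB]\in T_\fp[\cA]$ with $[\cB_{v,n}]\in\cT(r)$. Since $r\in\bI$ and $\cT(r)$ is compact, the special endomorphism $s$ of $\cB_{v,n}$ with $Q(s)=(r/D)\Nm\fa$ is honest (it exists over $\cO_{K_v}/v^n$, not merely over the residue field), and the isogeny $\phi\colon\cA\to\cB$ reduces to an isogeny $\phi_{v,n}\colon\cA_{v,n}\to\cB_{v,n}$ with kernel $C_{v,n}\subset\cA_{v,n}[\fp]$. Then $\hat\phi_{v,n}\circ s\circ\phi_{v,n}$ (or rather its normalization by the appropriate power of $p$, using that $\hat\phi\circ\phi = [\lambda]$ on $\cA$ where $\fp=(\lambda)$) is a special endomorphism of $\cA_{v,n}$. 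One checks that its $Q$-value equals $(rp/D)\Nm\fa$: the isogeny of degree $p$ scales $Q$ by $p$, and the normalization is arranged so that no denominators appear because $\fp\mid$ the relevant kernels. By \Cref{HZ=spEnd} and the moduli description of $\cT(pr)$, the existence of such a special endomorphism on $\cA_{v,n}$ is exactly the condition $[\cA_{v,n}]\in\cT(pr)$.

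The main obstacle I anticipate is the bookkeeping of normalizations: a priori the composite $\phi\circ s\circ\hat\phi$ lands in $\End\otimes\bQ$ rather than $\End$, and one must verify that it is in fact integral and is a \emph{special} endomorphism (i.e.\ commutes with $\cO_F$ via the conjugation $a\mapsto a'$ and is fixed by the Rosati involution attached to the chosen polarization). The $\cO_F$-equivariance should be automatic because $C\subset\cA[\fp]$ is an $\cO_F$-submodule (as noted in the proof of \Cref{Tpmoduli}, since $\bZ\twoheadrightarrow\cO_F/\fp$), so $\phi$ is $\cO_F$-linear; the Rosati-fixedness requires tracking how the $\fa$-polarization on $\cB$ (the one determined by $\lambda$, as in \Cref{Tpmoduli}) pulls back, and checking that $s^*=s$ is preserved. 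Integrality is the subtle point: one wants $\phi\circ s\circ\hat\phi = p\cdot(\text{special endomorphism})$ so that dividing by $p$ is forced by the equation $s'\circ s' = Q\cdot\Id$ rather than ad hoc; this is where working $\fp$-adically (as opposed to $p$-adically) and using $\fp\fp'=(p)$ makes everything integral. Once these normalizations are pinned down, both assertions are short, and the reduction-mod-$v^n$ statement requires no new input beyond the fact that isogenies and endomorphisms reduce compatibly, which is standard.
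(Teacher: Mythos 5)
The paper proves the first identity in one line: check the classical divisor identity $T_\fp T(r)=T(pr)=T_{\fp'}T(r)$ directly on $\bC$-points (this is standard theory of Hecke operators acting on Hirzebruch--Zagier divisors), then pass to $\bZ[1/pr]$ by taking Zariski closures, and deduce the mod-$v^n$ statement from the \'etaleness of the Hecke correspondence. Your route is genuinely different: you work through the moduli interpretation of \Cref{HZ=spEnd}, constructing a special endomorphism of $\cA$ from one on $\cB$ by pushing through the degree-$p$ isogeny $\phi\colon\cA\to\cB$, and reading off $Q(\hat\phi\circ s\circ\phi)=pQ(s)$ from the degree. Your $\cO_F$-equivariance and Rosati-invariance checks are correct, and this construction cleanly gives the inclusion $T_\fp\cT(r)\subseteq\cT(pr)$; moreover, applied modulo $v^n$ it yields the second assertion with no further input, which is arguably more self-contained than the paper's appeal to \'etaleness.

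However, there is a gap in how you upgrade the inclusion to the equality $T_\fp\cT(r)=\cT(pr)$. You write that ``both $T_\fp\cT(r)$ and $\cT(pr)$ are finite flat of the appropriate degree over $\cT(r)$,'' but $\cT(pr)$ does not sit over $\cT(r)$ in any natural way, so this degree count is not well-posed. What you would actually need is a comparison of the degrees of the divisor classes $T_\fp T(r)$ and $T(pr)$ in $\bcH$, which amounts to the class-number/multiplicity formulas governing Hirzebruch--Zagier divisors --- exactly the classical computation that the paper invokes by checking on $\bC$-points. (One also has to be careful that your moduli-theoretic construction recovers the divisor with the correct multiplicities, not just its support.) So your argument needs an additional input to close the equality; once that input is the classical identity over $\bC$, you are essentially back to the paper's proof of the first assertion. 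As noted, the inclusion $T_\fp\cT(r)\subseteq\cT(pr)$ that you do establish is the only direction used in the second assertion and in the application (\Cref{bestnonarch}), so the gap matters for the statement of the lemma but not for how it is used downstream.
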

\begin{proof}
By checking on complex points, we have $T_{\fp}T(r)=T(pr)=T_{\fp'}\cT(r)$. By definition, $\cT(m)$ is the Zariski closure of $T(m)$ and hence $T_{\fp}\cT(r)=\cT(pr)$. The second assertion then follows from the \'etaleness of Hecke orbits. 
\end{proof}

For the rest of this section, we assume $\End(A_{\overline{\bQ}})=\cO_F$ and $\cA$ has good reduction at $v$. Further, the \emph{norm} of a special endomorphism $s$ denotes the integer $Q(s)$. 

The following lemma is well-known and follows directly from the crystalline realization of the module of special endomorphisms. We record a proof here for completeness.
\begin{lemma}\label{twodim}
Let $m$ be the $\bZ$-rank of $M_{v,1}$ and $m'$ be the $\bZ_{\ell}$-rank of $\Lambda_v$. Then $m\leq 4$ and $m'\leq 2$.
\end{lemma}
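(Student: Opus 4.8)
\textbf{Proof plan for \Cref{twodim}.}

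The plan is to work with the crystalline (equivalently, Dieudonn\'e) realization of the module of special endomorphisms. First I would recall that, since $\cA$ has good reduction at $v$ and $\cB$ is the corresponding abelian surface over $\cO_{K_v}$, the crystalline cohomology $H^1_{\cris}(\cB_{v,1}/\bZ_\ell)$ (or the Dieudonn\'e module of $\cB[\ell^\infty]$) carries the $\cO_F\otimes\bZ_\ell$-action and the crystalline Frobenius, and that a special endomorphism $s$ induces a crystalline class that is self-adjoint with respect to the polarization pairing and anti-commutes appropriately with the $\cO_F$-action; concretely, $s$ lands in the orthogonal complement inside $\End$ of the span of $\cO_F$ and the polarization. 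After tensoring with $\bQ_\ell$ this is exactly the $(2,2)$-part of a $4$-dimensional quadratic space of signature type coming from the orthogonal model $\SO(V)$ of $\cH$ described in \cref{def_HZ}. The key point is that the $\bZ_\ell$-module $\Lambda_v = \End(\cB[\ell^\infty])\cap M_{v,1}\otimes\bZ_\ell$ injects into the Frobenius-invariants of this crystalline quadratic space, while $M_{v,1}\otimes\bZ_\ell$ injects into the full crystalline quadratic space.

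For the bound $m\leq 4$: the $\bZ$-module $M_{v,1}$ of special endomorphisms of $\cB_{v,1}$ injects into $\End(\cB_{v,1})\otimes\bQ$, and under the crystalline realization the $\bQ$-span of special endomorphisms lands inside the orthogonal complement of $\Span(\cO_F,\text{polarization})$ inside $\End^0$, which as recalled in \cref{def_HZ} is the $4$-dimensional quadratic space $V$ (the space on which $\SO(V)$ acts, giving the orthogonal-type Shimura variety structure of $\cH$). Hence $m = \rank_\bZ M_{v,1} \leq \dim_\bQ V = 4$. This is the same rank-$4$ bound already invoked right after the definition of special endomorphism in the excerpt, now applied in characteristic $\ell_v$.

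For the bound $m'\leq 2$: here I would use that $\Lambda_v$ consists of those special endomorphisms of $\cB_{v,1}$ that are genuine endomorphisms of the $\ell$-divisible group $\cB[\ell^\infty]$, equivalently (via crystalline Dieudonn\'e theory, cf.\ the setup before \Cref{GMes}) of special crystalline classes that are fixed by Frobenius. The Frobenius acts on the $4$-dimensional $\bQ_\ell$-quadratic space $V\otimes\bQ_\ell$ through a semisimple element whose eigenvalues are Weil numbers of weight $0$ relative to the polarization normalization; since $\cB_{v,1}$ is an abelian surface over a finite field, the eigenvalues of Frobenius on the relevant piece of $\End^0$ come in the shape $\{\mu_i/\mu_j\}$ for the four Frobenius eigenvalues $\mu_1,\bar\mu_1,\mu_2,\bar\mu_2$ of $\cB_{v,1}$, so the Frobenius-fixed subspace of $V\otimes\bQ_\ell$ has dimension at most $2$ unless $\cB_{v,1}$ is supersingular — and in the supersingular case one instead uses that the $F$-isotypic constraint ($s\,a = a'\,s$ for $a\in\cO_F$) cuts the fixed space down to rank $\leq 2$ again, because $\cO_F\otimes\bQ_\ell$ acts through the two embeddings and $s$ swaps them. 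Either way $m' = \rank_{\bZ_\ell}\Lambda_v \leq 2$.

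The main obstacle I anticipate is bookkeeping the supersingular case cleanly: when $\cB_{v,1}$ is supersingular the endomorphism algebra is a quaternion algebra and the naive ``Frobenius eigenvalue'' argument degenerates, so one must argue via the $\cO_F$-equivariance condition and the self-adjointness to see that the space of Frobenius-fixed special endomorphisms is still at most $2$-dimensional. An alternative, and probably cleaner, route is to bound $m'$ purely representation-theoretically: $\Lambda_v\otimes\bQ_\ell$ is a subspace of $V\otimes\bQ_\ell$ on which Frobenius acts trivially, and it is also stable under and compatible with the $\cO_F\otimes\bQ_\ell$-grading in a way that forces it to meet each of the two ``off-diagonal'' lines of $V$ (with respect to the splitting induced by the two embeddings $F\hookrightarrow\bQ_\ell$ or $F\otimes\bQ_\ell$) in at most a line, giving $m'\leq 2$ directly; this avoids casework on the isogeny type of $\cB_{v,1}$ entirely.
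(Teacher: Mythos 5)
Your bound $m\leq 4$ is broadly in the spirit of the paper (which works directly inside $H^2_{\cris}(\cB_{v,1}/W(\bar{\bF}_\ell))[1/\ell]^{\varphi=1}$, a $\bQ_\ell$-space of dimension $\leq 6$, and observes that $\cO_F\otimes\bQ_\ell$ and $M_{v,1}\otimes\bQ_\ell$ meet trivially there), so no serious objection to that part, though phrasing it through the $\SO(V)$-model in positive characteristic is more delicate than you make it sound.

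Your argument for $m'\leq 2$, however, rests on a mischaracterization of $\Lambda_v$. You write that $\Lambda_v$ corresponds, via crystalline Dieudonn\'e theory, to ``special crystalline classes that are fixed by Frobenius.'' But \emph{every} element of $M_{v,1}$ already gives a Frobenius-fixed class in (Tate-twisted) crystalline cohomology: these are genuine endomorphisms of the abelian surface $\cB_{v,1}$ over the finite residue field $\bF_v$, and the cycle class of any such endomorphism lands in $H^2_\cris(1)^{\varphi=1}$. Frobenius-fixity therefore does not distinguish $\Lambda_v$ from $M_{v,1}\otimes\bZ_\ell$. The constraint that \emph{does} cut out $\Lambda_v$ is lifting to $\cB[\ell^\infty]$ over $\cO_{K_v}$, and by Grothendieck--Messing theory that is precisely compatibility with the Hodge filtration coming from the de Rham cohomology of the lift $\cB$. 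This is the constraint the paper exploits: $\dim\Fil^0 = 5$, $\dim\Fil^1 = 1$, both $\cO_F$ and $\Lambda_v$ land in $\Fil^0$ and intersect trivially, so $m'+2\leq 5$, i.e.\ $m'\leq 3$; and $m'=3$ is ruled out because then $\Span\{\cO_F,\Lambda_v\}=\Fil^0$ would be a $\varphi$-invariant (slope-$0$) subspace carrying a nontrivial induced Hodge filtration, contradicting Mazur's weak admissibility.

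A quick sanity check shows why your route cannot work as stated: if a Frobenius-eigenvalue argument on $V\otimes\bQ_\ell$ forced the Frobenius-fixed part to have dimension $\leq 2$, it would apply equally well to $M_{v,1}\otimes\bQ_\ell$ (which is Frobenius-fixed for the same reason) and yield the false bound $m\leq 2$; this fails for supersingular $\cB_{v,1}$, where $m=4$. The same objection applies to your ``alternative, representation-theoretic'' route via the $\cO_F\otimes\bQ_\ell$-grading: any argument using only the $\varphi$-module structure and the $\cO_F$-action cannot separate $\Lambda_v$ from $M_{v,1}\otimes\bZ_\ell$; the Hodge filtration (hence the choice of integral lift $\cB$) must enter.
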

\begin{proof}
Since $s^*=s$ for any $s\in M_{v,1}$ (resp. $\Lambda_v$), we have that $\Lambda_v\subset M_{v,1}\otimes \bZ_\ell \subset H^2_{\cris}(\cB_{v,1}/W(\bar{\bF}_\ell))$. On the other hand, since $\cO_F\subset \End(\cB_{v,1})$ is stable under Rosati involution, we have a natural embedding $\cO_F\subset H^2_{\cris}(\cB_{v,1}/W(\bar{\bF}_\ell))$.
Since $H^2_{\cris}(\cB_{v,1}/W(\bar{\bF}_\ell))[1/\ell]$ is a $W(\bar{\bF}_\ell))[1/\ell]$-vector space of dimension $6$, then the Frobenius invariant part of $H^2_{\cris}(\cB_{v,1}/W(\bar{\bF}_\ell))[1/\ell]^{\varphi=1}$ is a $\bQ_\ell$-vector space of dimension at most $6$.

Since $s\circ f=f'\circ s$ for any $s\in M_{v,1}\otimes \bQ_\ell$ and $f\in F$, then $\cO_F\otimes \bQ_\ell \cap M_{v,1}\otimes \bQ_\ell=0$ in $H^2_{\cris}(\cB_{v,1}/W(\bar{\bF}_\ell))[1/\ell]^{\varphi=1}$. Since $\cO_F\otimes \bQ_\ell$ has dimension $2$, we have that $M_{v,1}\otimes \bQ_\ell$ is at most of dimension $4$. Since $M_{v,1}\inj M_{v,1}\otimes \bQ_\ell$, we have that $m\leq 4$.

On the other hand, the de Rham cohomology of $\cB$ induces a (decreasing) Hodge filtration $\Fil^\bullet$ on $H^2_{\cris}(\cB_{v,1}/W(\bar{\bF}_\ell))\otimes \bar{\bQ}_\ell$ with $\dim \Fil^0=5$ and $\dim \Fil^1=1$. Hence $\Fil^0\cap H^2_{\cris}(\cB_{v,1}/W(\bar{\bF}_\ell))[1/\ell]^{\varphi=1}$ is a $\bQ_\ell$-vector space of dimension at most $5$.
By Grothendieck--Messing theory, both $\cO_F$ and $\Lambda_v$ lie in $\Fil^0$ and hence $m'+2\leq 5$. If $m'=3$, then $\Span\{\cO_F, \Lambda_v\}=\Fil^0$. By Mazur's weak admissibility theorem, since both $\cO_F$ and $\Lambda_v$ lie in $H^2_{\cris}(\cB_{v,1}/W(\bar{\bF}_\ell))[1/\ell]^{\varphi=1}$, then $\Span\{\cO_F, \Lambda_v\}$ only has trivial filtration. This contradicts that $0\neq \Fil^1\subset \Fil^0$ and we conclude that $m'\leq 2$.
\end{proof}

\begin{theorem}\label{equinon}
Let $M(N,n,\epsilon)$ denote the number of primes $p \in [N^{1/2},N]$ such that $\# \{T_{\fp}([\cA_{v,n}]) \cap (\bigcup_{r\in \bI}\cT(r))\} \geq \epsilon p$. Then $M(N, \lceil 3e_v\log \log N \rceil,\epsilon) = o(N/(\log N))$.
\end{theorem}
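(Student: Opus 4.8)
\textbf{Proof proposal for \Cref{equinon}.}

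The plan is to bound the number of $p\in[N^{1/2},N]$ for which $T_\fp([\cA])$ meets $\bigcup_{r\in\bI}\cT(r)$ modulo $v^n$ in at least $\epsilon p$ points, where $n=\lceil 3e_v\log\log N\rceil$. First I would translate the modular condition into a statement about special endomorphisms: by \Cref{hecktate}, if $[\cB_{v,n}]\in\cT(r)$ for some $[\cB]\in T_\fp[\cA]$, then $[\cA_{v,n}]\in\cT(pr)$, and by \Cref{HZ=spEnd}/\Cref{notsimple} this means $\cA_{v,n}$ carries a special endomorphism $s$ with $Q(s)=pr\Nm\fa$. Conversely, the degree-$(p+1)$ correspondence $T_\fp$ lets one bound the number of such $[\cB]$ by a fixed multiple of the number of special endomorphisms of $\cA_{v,n}$ of norm $pr\Nm\fa$: concretely, each such $[\cB]$ gives an order-$p$ subgroup of $\cA[\fp]$ through which a special endomorphism of $\cA_{v,n}$ of norm $p^2 r\Nm\fa$ (or a related bounded quantity) factors, so up to an absolute constant $C$ the count $\#\{T_\fp([\cA_{v,n}])\cap\bigcup_r\cT(r)\}$ is at most $C\cdot\#\{s\in M_{v,n}: Q(s)\le C'pr\Nm\fa\}$ for a suitable $C'$ depending only on $\bI$. (This step is where one must be slightly careful about the interplay between $T_\fp$ and $T_{\fp'}$ and the two primes above $p$; one uses that both contribute, cf. \Cref{hecktate}.)

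Next I would invoke the endomorphism-rank dichotomy. Since $\End(A_{\overline\bQ})=\cO_F$, the generic fiber $\cA$ has no special endomorphisms, so the reduction $\cB_{v,1}$ (here $\cB=\cA$) has $M_{v,1}$ of rank $m\le 4$ by \Cref{twodim}, and the $\bZ_\ell$-rank $m'$ of $\Lambda_v$ is $\le 2$. The key point is that $m'\le 1$ for all but finitely many $v$: if $m'=2$ then $\cA_{v,1}[\ell^\infty]$ has a $2$-dimensional space of special quasi-endomorphisms, which forces $\cA_{v,1}$ to be (geometrically) isogenous to a product of two elliptic curves with CM by the same imaginary quadratic field, and moreover with the $\ell$-divisible group already "split" — this is the supersingular/extra-rank situation, which happens only for $v$ in a density-zero, in fact explicitly controllable, set; more to the point, for the purposes of the $o(N/\log N)$ bound we may simply discard those finitely many (or density zero) $v$ lying over the fixed set of bad $\ell$, since $v$ ranges over a fixed number field $K$ and $\ell$ is determined by $v$. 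So I would first handle the case $m'\le 1$ using \Cref{onedim}: with $N^{1/2}\le p\le N$ and norm bound $\asymp p$, we get
\[
\#\{s\in M_{v,n}: Q(s)\le C'pr\Nm\fa\}=O\!\Big(p^{1/2}+\sum_{j=2}^{m}\frac{p^{j/2}}{\ell^{(j-1)n/e_v}}\Big).
\]
With $n=\lceil 3e_v\log\log N\rceil$ we have $\ell^{n/e_v}\ge 2^{3\log\log N}=(\log N)^{3\log 2}$, and since $m\le 4$ and $p\le N$, each term $p^{j/2}/\ell^{(j-1)n/e_v}$ with $j\ge 2$ is $\le N^{j/2}/(\log N)^{3\log 2\,(j-1)}\le N/(\log N)^{3\log 2}\cdot N^{(j-2)/2}$; to make this work one instead uses $p\le N$ only in the exponent balance, noting $p^{j/2}\le p\cdot p^{(j-2)/2}$ and $p\le N$, giving a total bound $O(p^{1/2}+ p\,(\log N)^{-3\log 2})=o(p/\log p)$ uniformly in $p\in[N^{1/2},N]$ (choosing the constant in $n$ a little larger if needed so that $3\log 2>1$ suffices, which it does). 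Hence for such $v$ and all $p\in[N^{1/2},N]$, the quantity $\#\{T_\fp([\cA_{v,n}])\cap\bigcup_r\cT(r)\}$ is $o(p)$, so it is $<\epsilon p$ for $N$ large; thus $M(N,n,\epsilon)=0$ for those $v$ and large $N$.

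Finally I would address the finitely many exceptional places (those over the fixed bad primes $\ell$, or where $m'=2$), and more importantly the remark in the theorem statement that $M(N,n,\epsilon)$ counts primes, not places: here one fixes $v$, and the claim $M(N,n,\epsilon)=o(N/\log N)$ is about the number of $p\le N$. For the generic $v$ we just showed $M(N,n,\epsilon)$ is eventually $0$. For an exceptional $v$ (where $m'=2$), \Cref{onedim} no longer applies, but \Cref{dav} together with \Cref{longshortvector} still gives $\#\{s\in M_{v,n}:Q(s)\le C'p\}=O\big(\sum_{j=0}^m p^{j/2}\ell^{-n(j-m')/e_v}\big)$; the terms with $j\le m'=2$ are now only $O(p)$, not $o(p)$, so one cannot win pointwise in $p$. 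Instead, for such $v$ one reinterprets: if $\cA_{v,n}$ has $\ge\epsilon p$ special endomorphisms of norm $\asymp p$, this forces a genuine new special endomorphism of $\cA$ itself in the limit (via \Cref{numspecial}: the module $M_{v,n}$ stabilizes in the sense that its "$\Lambda_v$-primitive part" is controlled, so having too many small vectors for infinitely many large $p$ would produce a special endomorphism of $\cA[\ell^\infty]$ of unbounded... ) — but since $\cA$ has no special endomorphisms, the number of $p\in[N^{1/2},N]$ with this property must be bounded by a quantity $o(N/\log N)$; concretely, each bad $p$ forces $\cA_{v,1}$ (fixed!) to have a special endomorphism of norm dividing a bounded multiple of $p$, and a fixed rank-$\le 4$ lattice has $O(X)$ elements of norm $\le X$, but crucially the \emph{primitive new} ones of norm exactly proportional to a prime $p$ are limited by the geometry-of-numbers bound applied to the fixed lattice $M_{v,1}$ rather than the growing $M_{v,n}$, giving $O(N/(\log N)^?)$... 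The main obstacle, and the part I expect to require the most care, is exactly this case $m'=2$: making the geometry-of-numbers count interact correctly with the "primes, not integers" constraint, so that one genuinely gets $o(N/\log N)$ and not merely $O(N)$. The cleanest route is probably to show that the set of such $v$ (over the fixed $K$) is finite — equivalently that $m'\le 1$ for all but finitely many $v$ — which would make the theorem reduce entirely to the $m'\le 1$ analysis above. I would prove this finiteness by noting that $m'=2$ means $\cA_{v,1}$ is superspecial-type with respect to $\ell$, which by Honda--Tate / the analysis of \cite{Ch,Elk1} happens for $v$ in a set of density zero, but finiteness itself may need the hypothesis that $\cA$ is not CM, together with an Arakelov or equidistribution input — if finiteness genuinely fails one falls back on the $o(N/\log N)$ counting argument sketched above.
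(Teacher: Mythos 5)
Your proposal has the right ingredients (translating intersection points into special endomorphisms of $\cA_{v,n}$ via \Cref{hecktate} and \Cref{HZ=spEnd}, then invoking geometry of numbers with the decay of $M_{v,n}$ from \Cref{numspecial} and the rank bound of \Cref{twodim}), but the organizing strategy is different from the paper's and contains a genuine error.

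Your plan is a per-prime bound: show that for each $p\in[N^{1/2},N]$ and generic $v$, the count $\#\{T_\fp([\cA_{v,n}])\cap\bigcup_r\cT(r)\}$ is $o(p)$, so that $M(N,n,\epsilon)=0$ eventually. This fails. With $n=\lceil 3e_v\log\log N\rceil$ one has $\ell^{n/e_v}\asymp(\log N)^{O(1)}$, so the $j\ge 3$ terms in \Cref{onedim}, namely $p^{j/2}/\ell^{(j-1)n/e_v}$, can be of size $p^{2}/(\log N)^{O(1)}$, which is enormously larger than $p$ for $p$ near $N$. The manipulation you wrote --- ``noting $p^{j/2}\le p\cdot p^{(j-2)/2}$ \ldots giving $O(p^{1/2}+p(\log N)^{-3\log 2})$'' --- drops a factor $p^{(j-2)/2}$, which is not bounded by $(\log N)^{O(1)}$; the inequality is simply false for $j\ge 3$. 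So the per-prime count need not be $o(p)$, even when $m'\le 1$, and the conclusion ``$M(N,n,\epsilon)=0$ for those $v$'' does not follow.

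There is a second, structural misreading: $v$ is \emph{fixed} in \Cref{equinon}, so ``discard finitely many exceptional $v$'' or ``show $m'=2$ occurs for a density-zero set of $v$'' does not help. Whatever $v$ is, you must prove the asserted bound on the number of bad primes $p$. In particular the $m'=2$ case must be handled head-on, not avoided.

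The paper instead runs an aggregation (pigeonhole) argument that sidesteps both issues. Restrict to $p\in[N/\log N,N]$ (discarding $o(N/\log N)$ primes). If $p$ is bad, i.e.\ the intersection has $\ge\epsilon p$ points, then (by \Cref{hecktate}, \Cref{HZ=spEnd}) $\cA_{v,n}$ acquires $\ge\epsilon p\ge\epsilon N/\log N$ special endomorphisms of norm $pr\Nm\fa/D = O(N)$. Crucially these are \emph{distinct across different bad primes $p$}, since the norm depends linearly on $p$. Hence if $M=M(N,n,\epsilon)$ primes are bad, $\cA_{v,n}$ carries at least $M\cdot\epsilon N/\log N$ special endomorphisms of norm $\le O(N)$. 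But by \Cref{numspecial}, \Cref{dav}, \Cref{twodim} (allowing $m'\le 2$, $m\le 4$) the total such count is $O(N^{2}/\ell^{2n/e_v}+N^{3/2})$. Comparing, $M = O(N\log N/\ell^{2n/e_v}+N^{1/2}\log N)$, and substituting $n=\lceil 3e_v\log\log N\rceil$ gives $\ell^{2n/e_v}\ge(\log N)^{6\log 2}$ with $6\log 2-1>1$, hence $M=o(N/\log N)$. This works uniformly in $m'\in\{0,1,2\}$, with no need for any case analysis on $v$ or on $m'$.
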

\begin{proof}
The number of primes in the interval $[N^{1/2},N/\log(N)]$ is $o(N/\log(N)$, so we will restrict ourselves to primes $p \in [N/\log(N),N]$. For each prime $p$, each $[\cB_{v,n}] \in T_{\fp}([\cA_{v,n}]) \cap (\cup_{r\in \bI}\cT(r))$ induces a special endomorphism of $\cA_{v,n}$ whose norm is $pr\Nm \fa/D$. For all $p\in [N/\log N,N]$, the quantity $pr\Nm \fa/D=O(N)$. Notice that distinct $[\cB_{v,n}]\in T_{\fp}([\cA_{v,n}]) \cap (\cup_{r\in \bI}\cT(r))$ induce distinct special endomorphisms of $\cA_{v,n}$. Therefore, $\cA_{v,n}$ has at least $M(N,n,\epsilon) \epsilon N/\log N$ special endomorphisms with norm bounded by $N$. 

Applying the crudest bounds that \Cref{numspecial}, \Cref{dav} and \Cref{twodim} yield, the number of special endomorphisms $\cA_{v,n}$ has with norm bounded by $N$ is $O\bigg(\frac{N^2}{\ell^{2n/e_v}} + N^{3/2}\bigg)$. Therefore $M(N,n,\epsilon) = O\bigg(\frac{N \log N}{ \ell^{2n/e_v}} + N^{1/2}\log N\bigg)$ ($\epsilon$ gets absorbed in the $O()$). Substituting $n = \lceil 3e_v\log \log N \rceil$ yields $M(N,n,\epsilon)=o(N/(\log N))$ as required. 
\end{proof}

The following theorem shows that one can choose a sequence of $p$ such that the largest $v$-adic intersection multiplicity of a point in $T_\fp([\cA])$ with $\bigcup_{r\in \bI} \cT(r)$ is $O(\log p)$. 

\begin{theorem}\label{bestnonarch}
Set $n = \lceil \frac{e_v \log N}{ \log \ell} \rceil$. Then the number of primes $p \in [N^{1/2},N]$ for which there exists $[\cB] \in T_\fp([\cA])$ and $r\in \bI$ with $[\cB_{v,n}] \in \cT(r)$ is $o(N/\log N)$.

\end{theorem}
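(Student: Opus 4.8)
The plan is to translate the hypothesis into an assertion about special endomorphisms of the reduction $\cA_{v,n}$ and then feed it into the geometry-of-numbers estimates proved above. Suppose $p\in[N^{1/2},N]$ is a prime for which there exist $[\cB]\in T_{\fp}([\cA])$ and $r\in\bI$ with $[\cB_{v,n}]\in\cT(r)$. By \Cref{hecktate} we get $[\cA_{v,n}]\in\cT(pr)$, and since $D\mid r$, arguing as in the proof of \Cref{equinon} (a point of $\cT(Dr')$ carries a special endomorphism of $Q$-value $r'\Nm\fa$) shows that $\cA_{v,n}$ carries a special endomorphism $s_p\in M_{v,n}$ with $Q(s_p)=pr\Nm\fa/D$, a positive integer bounded by $CN$ with $C:=\Nm\fa\cdot\max_{r\in\bI}(r/D)$ depending only on $F$, $\fa$ and $\bI$. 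Once $N^{1/2}>\max\bI$, distinct primes $p\neq p'$ with this property yield $Q(s_p)\neq Q(s_{p'})$ (else $pr=p'r'$ would force the prime $p$ to divide $r'\in\bI$), hence $s_p\neq s_{p'}$. Therefore the number of primes in the statement is at most $\#\{s\in M_{v,n}\colon Q(s)\leq CN\}$.

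The decisive point is that, under the standing hypotheses of this subsection ($\End(A_{\overline{\bQ}})=\cO_F$ and good reduction at $v$), one has $\Lambda_v=0$, i.e.\ $m'=0$. Indeed, restriction to the generic fibre embeds $\End(\cA[\ell^\infty])$ into the Galois-equivariant endomorphisms of $T_\ell A_{K_v}$, which by Tate's isogeny theorem equals $\End(A_{K_v})\otimes\bZ_\ell=\cO_F\otimes\bZ_\ell$ (the last equality because $\cO_F\subseteq\End(A_{K_v})\subseteq\End(A_{\overline{\bQ}})=\cO_F$); as the $\cO_F$-action already extends over $\cO_{K_v}$, this forces $\End(\cA[\ell^\infty])=\cO_F\otimes\bZ_\ell$. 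Hence $\Lambda_v\subseteq(\cO_F\otimes\bZ_\ell)\cap(M_{v,1}\otimes\bZ_\ell)$ inside $\End(\cA_{v,1})\otimes\bZ_\ell$, and since the proof of \Cref{twodim} gives $(\cO_F\otimes\bQ_\ell)\cap(M_{v,1}\otimes\bQ_\ell)=0$, the torsion-free $\bZ_\ell$-module $\Lambda_v$ has rank $0$, so $\Lambda_v=0$.

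With $m'=0$ the count is immediate. Since $n=\lceil e_v\log N/\log\ell\rceil\geq e_v\log_\ell N$, we have $\ell^{n/e_v}\geq N$, so applying \Cref{onedim} (legitimate as $m'=0\leq 1$) with $m\leq 4$ from \Cref{twodim} and with norm bound $CN$,
\[
\#\{s\in M_{v,n}\colon Q(s)\leq CN\}=O\!\left(N^{1/2}+\sum_{j=2}^{m}\frac{N^{j/2}}{\ell^{(j-1)n/e_v}}\right)=O\!\left(N^{1/2}+\sum_{j=2}^{4}N^{\,1-j/2}\right)=O\bigl(N^{1/2}\bigr),
\]
where we used $\ell^{(j-1)n/e_v}\geq N^{j-1}$ and $j/2\leq j-1$ for $j\geq 2$. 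So the number of primes in the statement is $O(N^{1/2})=o(N/\log N)$, as required. (In fact \Cref{longshortvector} with $m'=0$ gives $\mu_1(M_{v,n})\gg\ell^{n/e_v}\geq N$, hence every nonzero special endomorphism of $\cA_{v,n}$ has $Q$-value $\gg N^{2}>CN$ once $N$ is large, so the set of primes in question is empty for $N\gg 0$; I would record this stronger statement as a remark.)

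I expect essentially all the difficulty to be concentrated in the single observation $\Lambda_v=0$, since this is exactly what prevents the geometry-of-numbers bound from degenerating to a useless $O(N)$; the remainder is bookkeeping that combines \Cref{hecktate} with \Cref{onedim} (equivalently \Cref{numspecial}, \Cref{longshortvector} and \Cref{dav}), and the deformation-theoretic heart has already been carried out in \Cref{numspecial}. The one step I would write out carefully is the identification of $\End(\cA[\ell^\infty])$ with $\cO_F\otimes\bZ_\ell$, which rests on Tate's theorem over the local field $K_v$ together with the elementary fact (implicit in the proof of \Cref{twodim}) that a nonzero endomorphism commuting with every element of $\cO_F$ cannot simultaneously be a special endomorphism.
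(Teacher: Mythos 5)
Your reduction of the theorem to counting special endomorphisms of $\cA_{v,n}$ of bounded norm is the same as the paper's, and the geometry-of-numbers bookkeeping is fine as far as it goes. But the proof collapses at its self-identified decisive point: the claim that $\Lambda_v=0$, which you justify by invoking ``Tate's isogeny theorem'' over the local field $K_v$ to assert $\End_{\Gal(\overline{K_v}/K_v)}(T_\ell A_{K_v})=\End(A_{K_v})\otimes\bZ_\ell$. That assertion is false. Tate's isogeny theorem is a theorem of Tate over finite fields and of Faltings over number fields; over $p$-adic local fields it fails, because the decomposition group at $v$ is a proper subgroup of the global Galois group and its centraliser in $\End(T_\ell A_{K_v})$ is typically strictly larger than $\End(A_{K_v})\otimes\bZ_\ell$. (Tate's theorem for $p$-divisible groups over $\cO_{K_v}$ does identify $\End(\cA[\ell^\infty])$ with $\End_{\Gal_{K_v}}(T_\ell A_{K_v})$; it is the further descent to $\cO_F\otimes\bZ_\ell$ that has no justification.) Consequently there is no reason for $\cA[\ell^\infty]$ to have no special endomorphisms, even when $\End(A_{\overline{\bQ}})=\cO_F$.

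This is not a hypothetical worry: the authors state, in the paragraph preceding \Cref{GMes}, that an earlier version of the paper ``implicitly assumed that $\Lambda_v=0$'' and credit Davesh Maulik with pointing out the gap. Your argument reproduces that erroneous version. The paper's \Cref{twodim} proves only $m'\leq 2$; it does not exclude $m'=1$ or $m'=2$, and the proof of \Cref{bestnonarch} must therefore treat those cases. When $m'\leq 1$, \Cref{onedim} gives the $O(N^{1/2})$ count exactly as in your write-up. When $m'=2$, the paper extracts a rank-two sublattice $P_n\subset M_{v,n_0}$ of minimal discriminant with $M_{v,n}\subset \ell^k M_{v,n_0}+P_n$ (notation as in \Cref{numspecial}), and then splits according to whether $d(P_n)\geq(\log N)^2$, in which case the successive-minima bound of \Cref{dav} still gives $o(N/\log N)$, or $d(P_n)<(\log N)^2$, in which case \Cref{belowbest} forces every relevant special endomorphism into $P_n$ and one must run the class-number/quadratic-form argument used to finish the proof of \Cref{archBest}. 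The hard case you hoped to eliminate by proving $\Lambda_v=0$ is genuine, and that portion of your proof needs to be replaced by the paper's case analysis.
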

\begin{proof}
Let $p \in [N^{1/2},N]$ such that there exists $[\cB] \in T_\fp([\cA])$ and $r \in \bI$ as in the statement. Then $\cA_{v,n}$ has a special endomorphism, say $s_p$, of norm $pr\Nm \fa/D$. Clearly, $s_p \neq s_{p'}$ where $p' \neq p$ also satisfies the conditions in the statement. Therefore, each such $p$ induces a distinct special endomorphism of $\cA_{v,n}$ having norm $O(N)$ and it suffices to bound the number of special endomorphisms of norm $\leq N$. 

Let $\Lambda_v$ denote the module of special endomorphsims of $\cA_{\mathcal{O}_{K_v}}[\ell^{\infty}]$. If $\Lambda_v$ has $\bZ_{\ell}$-rank $\leq 1$, then \Cref{onedim} yields the desired result. Therefore, we assume that the rank is at least 2. By \Cref{twodim}, the rank of $\Lambda_v$ is at most two, so we assume that the rank equals two. Let $n = n'_0 + ke_v$, where $n'_0 -n_0 < e_v$ with $n_0$ as in \Cref{numspecial}. We have $M_{v,n} \subset \ell^k \cap M_{v,n_0} + P'_n$, where $P'_n$ is a rank-two sublattice of $M_{v,n_0}$. There is no unique choice of $P'_n$, so we choose $P_n$ to be any one with minimal root-discriminant $d(P_n)$. As $\cB$ has no special endomorphisms generically, it follows that $d(P_n) \rightarrow \infty$. 

We first deal with the case when $d(P_n) \geq \log(N)^2$. 
Since $\mu_1(M_{v,n})\mu_2(M_{v,n})$ is of the same order of magnitude as $d(P_n)$,
then $\#\{v \in M_{v,n}: Q(v) \leq N\}= O(\frac{N^2}{ \ell^{2n/e_v}} + \frac{N^{3/2}}{\ell^{n/e_v}} + \frac{N}{d(P_n)} + N^{1/2})$ by \Cref{dav}.\footnote{Note that $k$ differs from $n/e_v$ by a quantity bounded independent of $n$, so $ 1/\ell^{k} = O(1/\ell^{\frac{n}{e_v}})$.}   This quantity is $o(N/\log N)$ and so the result follows in this case. 

Suppose now that $d(P_n) \leq \log(N)^2$. By \Cref{belowbest} below, if $v \in M_{v,n}$ has norm bounded by $N$, it follows that $v \in P_n$. As $d(P_n) \rightarrow \infty$, the same argument used to finish the proof of \Cref{archBest} applies to prove that the proportion of primes $p$ such that there exists a $\cB \in T_{\fp}(\cA)$ modulo $v^n$ goes to zero. 
\end{proof}

\begin{lemma}\label{belowbest}
Notation as above. Suppose that $d(P_n) \leq \log(N)^2$. If $Q(v) \leq N$, then $v \in P_n$. 
\end{lemma}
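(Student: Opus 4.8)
The plan is to argue by contradiction using the structure $M_{v,n} = (\ell^k M_{v,n_0} \cap M_{v,n_0}) + P_n$ from \Cref{numspecial}, together with the fact that $P_n$ is chosen to have minimal root-discriminant among rank-two sublattices of $M_{v,n_0}$ that, together with $\ell^k M_{v,n_0}$, generate $M_{v,n}$. Suppose $v \in M_{v,n}$ has $Q(v) \le N$ but $v \notin P_n$. Write $v = w + u$ with $w \in \ell^k M_{v,n_0}\otimes\bZ_\ell$ (so $Q(w)$ is divisible by a large power of $\ell$, roughly $\ell^{2k} \sim \ell^{2n/e_v}$, since $Q$ is the square of the multiplication scalar and scaling an endomorphism by $\ell^k$ scales $Q$ by $\ell^{2k}$) and $u \in P_n$. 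The point is that $w \ne 0$ (else $v = u \in P_n$), so $Q(w) \ge \ell^{2k}$, which for $n = n'_0 + ke_v$ of the size in \Cref{bestnonarch} is already larger than $N = \ell^{n/e_v - O(1)}$ when... wait, here $n \approx e_v \log N/\log\ell$, so $\ell^{n/e_v} \approx N$ and $\ell^{2n/e_v} \approx N^2 \gg N$. Hence $w = 0$ and $v \in P_n$, a contradiction. So the heart of the argument is just the quadratic scaling $Q(\ell^k \beta) = \ell^{2k} Q(\beta)$ combined with the orthogonal-ish decomposition coming from \Cref{numspecial}.

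To make this precise I would first pass to $\bZ_\ell$-coefficients and use \Cref{numspecial} in the form $\Lambda_{v,n} = \Lambda_v + \ell^k \Lambda_{v,n_0}$ (with $n = n'_0 + ke_v$), where $\Lambda_v$ has $\bZ_\ell$-rank two in the case at hand; then $P_n$ is an integral rank-two lattice with $P_n \otimes \bZ_\ell$ a complement-type sublattice realizing $\Lambda_v$. Given $v \in M_{v,n}$, decompose it as $v = u + w$ with $u \in P_n$ and $w \in \ell^k M_{v,n_0}$. The key computation is to bound $Q(w)$ from below: since $w = \ell^k \beta$ for some special endomorphism $\beta$ of $\cB_{v,n_0}$ and $\beta \ne 0$ whenever $w \ne 0$, positive-definiteness of $Q$ on $M_{v,n_0}$ gives $Q(w) = \ell^{2k} Q(\beta) \ge \ell^{2k}$. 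I would also need a bound relating $Q(v)$ to $Q(u)$ and $Q(w)$; because $Q$ is positive definite but $u,w$ need not be orthogonal, I would instead estimate $|Q(v) - Q(u) - 2B(u,w) - Q(w)|$ where $B$ is the associated bilinear form, and use Cauchy--Schwarz to control the cross term, or — cleaner — observe that $v \equiv u \pmod{\ell^k}$ as endomorphisms modulo the appropriate power, so $Q(v) \equiv Q(u) \pmod{\ell^k}$; if $v \notin P_n$ then already $Q(v)$ would have to be comparable to $\ell^{2k} \sim N^2$, contradicting $Q(v) \le N$.

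Concretely, the steps in order: (1) recall from \Cref{numspecial} and the setup of \Cref{bestnonarch} that $M_{v,n} \subseteq \ell^k M_{v,n_0} + P_n$ with $k \sim n/e_v \sim \log N/\log\ell$; (2) take $v \in M_{v,n}$ with $Q(v) \le N$ and write $v = w + u$ accordingly; (3) if $w \ne 0$, bound $Q(w) \ge \ell^{2k} \ge N^{2 - o(1)}$ using $Q(\ell^k\beta) = \ell^{2k}Q(\beta)$; (4) derive a lower bound for $Q(v)$ that contradicts $Q(v) \le N$, either via the congruence $Q(v)\equiv Q(u)\pmod{\ell^{k}}$ forcing the non-$P_n$ component to contribute a multiple of $\ell^k$ to the "size" of $v$, or via a direct geometry-of-numbers estimate: since $d(P_n) \le (\log N)^2$ the successive minima of $P_n$ are at most $(\log N)^2$, so any $v \in M_{v,n}$ of small norm must have trivial $\ell^k M_{v,n_0}$-component, i.e. lie in $P_n$; (5) conclude $w = 0$, hence $v \in P_n$.

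The main obstacle I anticipate is step (4): being careful about the non-orthogonality of the decomposition $M_{v,n} = \ell^k M_{v,n_0} + P_n$, and about the exact relationship between the integer $k$, the level $n$, and the ramification $e_v$ (the excerpt's footnote warns that $k$ differs from $n/e_v$ by a bounded amount, so one must track the $O(1)$ losses in the exponent of $\ell$ and confirm $\ell^{2k}$ genuinely dominates $N$ for the chosen $n = \lceil e_v \log N/\log \ell\rceil$). I would handle the non-orthogonality by working instead with the quotient $M_{v,n}/P_n$, which is $\ell$-power torsion of bounded exponent, and noting that the minimal-root-discriminant choice of $P_n$ guarantees any vector whose class in this quotient is nonzero has norm $\gg \ell^{2k}$; this circumvents the need for any explicit cross-term bound. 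Once that is in place, the contradiction with $Q(v)\le N \ll \ell^{2k}$ is immediate.
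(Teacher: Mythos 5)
Your diagnosis of the obstacle is exactly right: $v = u + \ell^k w$ with $u \in P_n$ and $w \in M_{v,n_0}$ is not an orthogonal decomposition, so $Q(w) \geq \ell^{2k}$ does \emph{not} force $Q(v)$ to be large, since $u$ and $\ell^k w$ could almost cancel. The problem is that none of the three workarounds you offer actually closes this gap. The congruence $Q(v) \equiv Q(u) \pmod{\ell^k}$ constrains residues, not sizes, and says nothing about a lower bound on $Q(v)$. The observation about the successive minima of $P_n$ being small does not address the cancellation between the $P_n$-part and the $\ell^k M_{v,n_0}$-part. And the quotient $M_{v,n}/P_n$ is \emph{not} $\ell$-power torsion of bounded exponent in the relevant case: since $P_n$ has rank $2$ while $M_{v,n}$ has rank $m$, the quotient has positive free rank $m-2$ whenever $m \in \{3,4\}$ (and $m=2$ is precisely the trivial case where $M_{v,n_0} = P_n$). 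So the argument you say would ``circumvent'' the cross-term problem relies on a false structural claim.

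The paper avoids the issue entirely by choosing a genuinely orthogonal decomposition at the ambient level $M_{v,n_0}$, not at the level of $\ell^k M_{v,n_0}$ versus $P_n$. Set $P'_n = (P_n^{\perp} \cap M_{v,n_0})$, the orthogonal complement of $P_n$ inside $M_{v,n_0}$. One shows, via standard dual-lattice estimates controlled by $d = d(P_n)$, that $C d^4 M_{v,n_0} \subset P_n \oplus P'_n$ (an orthogonal sum). Thus $C d^4 v$ decomposes orthogonally as $(C d^4 u + \ell^{\lfloor(n-n_0)/e_v\rfloor} w_1) + \ell^{\lfloor(n-n_0)/e_v\rfloor} w'_1$ with $w'_1 \in P'_n$. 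Positive definiteness and orthogonality then give $Q(C d^4 v) \geq \ell^{2\lfloor(n-n_0)/e_v\rfloor} Q(w'_1)$, while $Q(C d^4 v) = C^2 d^8 Q(v) \leq C^2 d^8 N$; since $d \leq (\log N)^2$ and $\ell^{\lfloor(n-n_0)/e_v\rfloor}$ is of size $\approx N$, the left side is $\ll N^{1+o(1)}$ and the right side would be $\gg N^2$ unless $w'_1 = 0$. This forces $w'_1 = 0$, hence $C d^4 w \in P_n$, hence $v \in P_n \otimes \bQ \cap M_{v,n}$, which is what's needed. The key move you're missing is to scale by the discriminant-controlled factor $C d^4$ so that orthogonal projection onto $P'_n$ stays integral, turning the estimate into a genuine orthogonal comparison rather than one plagued by cross terms.
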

\begin{proof}
For brevity set $d = d(P_n)$. Fix a constant $n_0$ as in \Cref{numspecial}.
 Let $P'_n$ denote the intersection with $M_{v,n_0}$ of the orthogonal complement of $P_n\otimes \bQ$ in $M_{v,n_0}\otimes \bQ$. 
We have that $Cd^4 M_{v,n_0}\subset P_n+P_n'$, where $C$ is a positive constant only depending on the discriminant of  $Q$ on $M_{v,n_0}$. Indeed, let $P_n^\vee$ (resp. $P_n'^\vee$) denote the dual lattice of $P_n$ (resp. $P_n'$) in $P_n\otimes \bQ$ (resp. $P_n'\otimes \bQ$) with respect to the restriction of the quadratic form $Q$ to $P_n\otimes \bQ$ (resp. $P_n'\otimes \bQ$). Then $d^2 P_n^\vee \subset P_n$. On the other hand, there is a constant $C'$ depending only on $\disc Q$ such that $P_n'$ is spanned by two vectors $x,y$ such that $Q(x), Q(y)\leq C'd^2$ (since they are given by solving linear equations with coefficients bounded by $O(d)$). Therefore, there exists a constant $C$ such that $Cd^4 P_n'^\vee\subset P_n'$. Since $M_{v,n_0}\subset (P_n+P_n')^\vee = P_n^\vee+P_n'^\vee$, we have $Cd^4 M_{v,n_0}\subset P_n+P_n'$.

Let $v \in M_{v,n}$ satisfy $Q(v) \leq N$. Suppose that $v = u + \ell^{\lfloor (n-n_0)/e_v\rfloor}w$ with $u \in P_n$ and $w\in M_{v,n_0}$, and let $Cd^4 w = w_1 + w'_1$, where $w_1 \in P_n$ and $w'_1 \in P'_n$. Then, $$Cd^4v = (Cd^4u +  \ell^{\lfloor (n-n_0)/e_v \rfloor}w_1) +  \ell^{\lfloor (n-n_0)/e_v \rfloor}w'_1,$$ and thus $$C^2d^8 N \geq Q(C d^4 v) \geq  \ell^{2\lfloor (n-n_0)/e_v \rfloor} Q(w'_1).$$ As $\ell^{2\lfloor (n-n_0)/e_v \rfloor}> C^2d^8N$, it follows $w'_1 = 0$ as required. 
\end{proof}

\section{Proof of the main theorem}

The goal of this section is to deduce our main theorem from the results in \S\S3-4 which provide upper bounds of the local intersection numbers. Recall that $A$ is an abelian surface over $K$ with an $\fa$-polarization and $\cO_F\subseteq\End(A)$. As in \ref{sub_H&HZ}, we will assume the existence of a semi-abelian scheme $\cA$ over $\cO_K$ with semistable reduction everywhere, whose generic fiber is $A$. Recall that $p$ denotes a prime which is totally split in the narrow Hilbert class field of $F$. To prepare for our proof, we first use Borcherds' theory to choose a suitable Hirzebruch--Zagier divisor in the Hilbert modular surface and then compute the asymptotic of Faltings heights on the Hecke orbits.

\subsection{Borcherds' theory and the Faltings height}\label{sub_Borcherds&ht}
We devote this subsection to applying arithmetic Borcherds' theory to choose a rational section of certain tensor powers of the Hodge line bundle. We then interpret the Faltings height of an abelian surface as a certain Arakelov intersection number.

We use $\cA^{\sa}$ to denote the universal family of semi-abelian schemes over $\bcH$ (with suitable level structure). In \cite{BBK}*{\S6}, the authors explain a way to define the arithmetic intersection independently of the choice of a level structure. We recall their definition in \ref{stacky}. 
Let $e:\bcH\rightarrow \cA^{\sa}$ be the identity section and let $\omega=\det(e^*\Omega^1_{\cA^{\sa}/\bcH})$ over $\bcH$ be the Hodge line bundle. We endow $\omega$ with a Hermitian metric $||\cdot||_F$ (only on $\cH(\bC)$) as in \cite{F85}*{sec.~3} and denote by $\homg$ the Hermitian line bundle with log singularity along the boundary. By definition, we have
$$h_F(A)=ht_{\homg}([\cA]),$$
where $h_F$ denotes the stable Faltings height and $ht$ is the height function of subvarieties of an arithmetic variety with respect to certain arithmetic cycles (see, for example, \cite{BBK}*{\S1.5, eqn.~(1.17)} and we normalize $h_F$ and $ht_{\homg}$ to be independent on the choice of $K$; more specifically, $||\ell||_v=\ell^{-\frac{[K_v:\bQ_\ell]}{[K:\bQ]}}$).

It is well known that the space of global sections of the line bundle $\omega^{\otimes k}$ over $\bcH_{\bC}$ (resp. $\bcH_{\overline{\bQ}}$) is the space of Hilbert modular forms of parallel weight $k$ over $\bC$ (resp. $\overline{\bQ}$);
see, for example, \cite{FC}*{Chp.~V.1} and \cite{Chai}*{sec.~4}.
Up to a constant, the Hermitian metric\footnote{We also use $||\cdot||_F$ to denote the metric on $\homg^{\otimes k}$ given by the tensor product of the Hermitian metric $||\cdot||_F$ on $\homg$.} $||\cdot||_F$ on $\omega^{\otimes k}$ is defined by $||f(z)||_{\text{Pet}}=|f(z_1,z_2)(\Im z_1)^{k/2}(\Im z_2)^{k/2}|$, where $f$ is a Hilbert modular form of parallel weight $k$ and $z=(z_1,z_2)\in \bH^2$: indeed, this follows from the $G(\bR)$-invariance of both metrics. 

\begin{lemma}\label{cpt_div}
There exist a positive integer $k$ and a meromorphic Hilbert modular form $\Psi$ over $\overline{\bQ}$ of parallel weight $k$ such that the divisor $\Div(\Psi)$ defined by $\Psi$ on $\cH_{\bQ}$ is given by $\sum_{r\in \bI} c_r T(r)$, where $c_r\in \bZ$ and $\bI$ is a finite subset of $$\bJ=\{qD \mid \text{q is a rational prime inert in $F$}\}.$$ In particular, $\Div(\Psi)$ is a weighted sum of compact Shimura curves.
\end{lemma}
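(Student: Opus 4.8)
The plan is to use the arithmetic Borcherds lift, which converts a vector-valued modular form of the appropriate weight and type (with respect to the Weil representation attached to the lattice $L$ of signature $(2,2)$ from \cref{def_HZ}) into a meromorphic modular form on $\cH$ whose divisor is an explicit integral combination of the Hirzebruch--Zagier divisors $T(r)$. Concretely, I would invoke Borcherds' theorem (in the form used in \cite{BBK}, building on \cite{Bo98}) that there is a lifting from weakly holomorphic modular forms $f = \sum_{m} c(m) q^m$ of weight $1-\tfrac{2+2}{2} = -1$ for the dual Weil representation $\rho_L^\vee$ to meromorphic Hilbert modular forms $\Psi_f$ of parallel weight $k = c(0)/2$ (suitably interpreted) with $\Div(\Psi_f) = \sum_{m>0} c(-m)\, T(m)$, where the sum runs over those $m$ in the support of the principal part of $f$. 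The divisor conditions on the allowed $T(r)$ (namely $T(r)\neq\emptyset$) are governed by the genus-theoretic congruence conditions in \Cref{HZdiv}: $T(r)$ is nonzero only when $r\Nm\fa \equiv -\Nm\gamma \pmod D$ for some $\gamma\in\fa$. So the real content is to produce an input form $f$ whose principal part is supported \emph{only} on exponents $m = qD$ with $q$ inert in $F$ — so that by \Cref{HZdiv}/\Cref{good_r} every divisor component $T(qD)$ appearing is a (compact) Shimura curve.

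The key steps, in order, would be: (1) Identify the relevant space of vector-valued weakly holomorphic modular forms of weight $-1$ for $\rho_L^\vee$ (equivalently, by Borcherds' duality/Serre-duality pairing, relate possible principal parts to the space of holomorphic cusp forms of the dual weight $3$ for $\rho_L$, which controls the obstruction to prescribing a given principal part). (2) Show that among the components of $\bJ = \{qD : q \text{ inert in }F\}$, there are enough of them, with enough freedom, that one can cancel the obstruction: i.e., find a nonzero weakly holomorphic form whose principal part is a $\bZ$-linear (after clearing denominators) combination of $q^{-qD}$-type terms with $q$ inert. The infinitude of inert primes (Chebotarev) together with the finite dimensionality of the obstruction space $S_3(\rho_L)$ guarantees that finitely many such exponents already span a space surjecting onto the obstruction space, hence some combination lies in the image of the Borcherds input map. (3) Rationality: the Borcherds lift of a form with rational (hence, after scaling, integral) Fourier coefficients is a meromorphic modular form defined over $\bQ$ (or at worst $\overline{\bQ}$, which is all that is claimed); one clears denominators to arrange $c_r\in\bZ$ and $k\in\bZ_{>0}$. (4) Conclude that $\Div(\Psi) = \sum_{r\in\bI} c_r T(r)$ with $\bI\subset\bJ$ finite, and each $T(r)$ with $r\in\bJ$ is a compact Shimura curve by \Cref{good_r}.

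The main obstacle is step (2): one must verify that the obstruction space (cusp forms of weight $3$ for the Weil representation of $L$) is killed by restricting attention to principal parts supported on inert-prime exponents $qD$. This is where one uses that the pairing between $S_3(\rho_L)$ and the space of principal parts is nondegenerate, so that prescribing a principal part $\sum a_q q^{-qD}$ is unobstructed precisely when $\sum a_q c_g(qD) = 0$ for every cusp form $g=\sum c_g(m)q^m \in S_3(\rho_L)$; since there are infinitely many inert $q$ but only a finite-dimensional space of such $g$, a nonzero solution $(a_q)$ — with only finitely many nonzero, and rational — exists by linear algebra, and then scaling gives an integral one. A minor additional point to check is that the resulting $\Psi$ is genuinely meromorphic and not identically zero, which follows from the explicit product expansion in Borcherds' construction once $f$ is nonzero (and $f$ is nonzero because its principal part is). The genus-congruence compatibility — that every $qD$ with $q$ inert actually satisfies the condition of \Cref{HZdiv}, so that $T(qD)$ is a legitimate nonzero divisor rather than a vanishing one — is exactly \Cref{good_r}, already proved above, so no further work is needed there.
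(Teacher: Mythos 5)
Your proposal follows the same overall strategy as the paper: apply arithmetic Borcherds theory to produce a meromorphic Hilbert modular form whose divisor is supported on the compact curves $T(qD)$, $q$ inert, and then use \Cref{good_r} together with rationality of the Borcherds lift to close the argument. The difference is that the paper simply cites \cite{Br16}*{Thm.~1.1} (with $\bJ$ playing the role of the ``infinite admissible set''), whereas you re-derive the existence step via the linear-algebra argument on principal parts versus the obstruction space. That substitution is legitimate in spirit---it is essentially what Bruinier's theorem is proving---but it introduces two issues worth flagging.

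First, a bookkeeping error: for a lattice of signature $(2,n)$ the Borcherds input is a weakly holomorphic form of weight $1-n/2$, so here with $n=2$ the weight is $0$, not $-1$; correspondingly the obstruction space consists of cusp forms of weight $2$, not $3$ (and in the BBK scalar-valued normalization one uses weight-$0$ forms for $\Gamma_0(D)$ with quadratic nebentypus, dual weight $2$).

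Second, and more substantively, your dimension count only guarantees a nonzero weakly holomorphic $f$ with principal part supported on $\{qD\}$ and no cusp-form obstruction; it does \emph{not} by itself guarantee that the resulting weight $k$, which is determined (via pairing the principal part with the Eisenstein series) by the constant coefficient $c(0)$, is nonzero. A nonzero $f$ can perfectly well yield $c(0)=0$, i.e.\ a Borcherds product that is a meromorphic \emph{function}, and the lemma requires a form of positive parallel weight $k$. To fix this you need either to add the constraint ``pairing with the Eisenstein series $\neq 0$'' to the linear algebra (which costs one more free parameter and requires knowing that this linear form does not vanish identically on principal parts supported on $\bJ$), or simply invoke \cite{Br16}*{Thm.~1.1}, which asserts directly that an infinite admissible set produces a Borcherds product of \emph{nonzero} weight; one then replaces $\Psi'$ by $\Psi'^{-1}$ if the weight is negative, as the paper does. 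Apart from this gap (and the off-by-one in the weight), your route is the same as the paper's, just with the existence input proved by hand instead of quoted.
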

Borcherds' theory lifts weakly holomorphic modular forms on modular curves to meromorphic Hilbert modular forms on $\cH_{\bC}$. By lifting, it means that the divisor defined the resulting Hilbert modular form is determined by the principal part of the Fourier expansions of the given modular form at cusps of the modular curve. Borcherds and Bruinier showed that the existence such lift of certain modular form can be verified by certain explicit conditions on the Fourier coefficients of its principal part. The Fourier expansions of Borcherds lifts (sometimes also called Borcherds products) have also been studied by many people, which leads to an arithmetic theory of these lifts. One may see \cite{BBK}*{\S4} for a summary of relevant results when the discriminant of $F$ is a prime.
\begin{proof}[Proof of \Cref{cpt_div}]
By \cite{Br16}*{Thm.~1.1},\footnote{\cite{Br16}*{Thm.~1.1} is a generalization of \cite{BBK}*{Lem.~4.11}. The proof of this lemma, which only deals with the case when $D$ is a prime, contains the main idea of the proof for the general case.} in which we take the infinite admissible set to be $\bJ$, there exists a Borcherds product $\Psi'$ of non-zero weight $k$ whose divisor is supported on $\cup_{r\in \bJ}T(r)$. In other words, $\Psi'$ is a Hilbert modular form of parallel weight $k$ over $\bC$. We may assume $k>0$, since otherwise we just take $\Psi'^{-1}$. By \cite{Br16}*{Prop.~3.1}, the weakly holomorphic modular $f$ whose Borcherds lift is $\Psi$ has integral Fourier coefficients. \cite{Hor}*{3.2.14} shows that, after multiplying by a suitable scalar, the Borcherds lift of a modular form with Fourier coefficients in $\bQ$ is defined over $\overline{\bQ}$. In particular, if we take $\Psi$ to be $\Psi'$ multiplied by a suitable scalar, then $\Psi$ is a rational section of $\omega^{\otimes k}$ over $\bcH_{\overline{\bQ}}$. 
For $r\in \bJ$, the divisor $T(r)$ is compact by \Cref{good_r}. 
\end{proof}

\begin{para}\label{vertical_div}
We view $\Psi$ as a rational section of $\omega^{\otimes k}$ over $\bcH_{\cO_{K'}}$, where $K'$ is a large enough number field such that $\Psi$ is defined. Hence $\Div(\Psi)=\sum_r c_r \cT(r) + \sum_p \cE_p$, where the second sum is over finitely many $p$ and $\cE_p$ is a finite (weighted) sum of irreducible components of $\bcH_{\overline{\bF}_p}$.
\end{para}

\begin{para}\label{stacky}
Given an arithmetic divisor $\cD$ and a horizontal $1$-cycle $\cZ$ intersecting properly on $\bcH_{\cO_{K'}}$, one defines the (arithmetic) intersection number as follows: (see, for example, \cite{BKK}*{Thm.~1.33}\footnote{The higher tor group vanishes since we work with a Cartier divisor which intersects the $1$-cycle properly.} for regular schemes and \cite{Yang}*{eqn.~(2.1)} for regular Deligne--Mumford stacks.)
$$\cD.\cZ=\sum_v\sum_{x\in (\cZ\cap\cD)(\overline{\bF}_v)}\frac{\log (\# \tilde{\cO}_{\cZ\cap \cD,x})}{\# \Aut(x)}=\sum_v\sum_{x\in (\cZ\cap\cD)(\overline{\bF}_v)}\frac{ \Length (\tilde{\cO}_{\cZ\cap \cD,x}) \log (\# k(x))}{\# \Aut(x)},$$
where $v$ ranges over the through finite places of $K'$, the intersection $\cZ\cap\cD=\cZ\times_{\bcH}\cD$ is a Deligne--Mumford stack of dimension $0$, the ring $\widetilde{\cO}_{\cZ\cap \cD,x}$ is the strictly Henselian local ring of $\cZ\cap \cD$ at $x$, and $k(x)$ is the residue field of $\widetilde{\cO}_{\cZ\cap \cD,x}$. By definition, $\displaystyle \frac{\cD.\cZ}{[K':\bQ]}$ is independent of the choice of $K'$.

In \cite{BBK}*{sec.~6.3}, they define the arithmetic intersection number on $\bcH$ as the arithmetic intersection number of the pull back of arithmetic cycles to $\bcH(N)$, the Hilbert modular surface with full level $N$-structure with $N\geq 3$, divided by the degree of the map $\bcH(N)\rightarrow \bcH$. This is the idea behind the above formula.
\end{para}

\begin{rem}\label{multiplicity}
For $\cD$ and $\cZ$ as above, let $n$ denote the largest integer such that $\cZ$ is contained in $\cD$ modulo $v^n$ (here, we consider $\cZ$ and $\cD$ as subschemes of the coarse Hilbert modular surface). In our applications, we will only consider the intersection at finitely many places. Therefore, we may pass to a suitable level structure \'etale at these finitely many places so that $\cT(r)$ are regular (\cite{Ca}). Then the length at $v$ referred to in \Cref{stacky} differs from $n$ by an absolutely bounded factor. As we are only concerned with bounds, we will in the sequel restrict ourselves with controlling the growth of $n$.
\end{rem}

\begin{lemma}\label{ht_compare}
Assume that $\End(A_{\overline{K}})=\cO_F$. Following the notation as in \Cref{cpt_div}, there exists a constant $C_1$ independent of $A$ such that 
$$\left | h_F(A)-\frac 1 {k[K:\bQ]} [\cA]. \sum_{r\in \bI} c_r\cT(r) -\frac 1 {k[K:\bQ]} \sum_{\sigma:K\inj\bC}\frac 1{\# \Aut(A_{\overline{K}})}\log ||\Psi(\sigma([A]))||_{\rm{Pet}} \right | <C_1.$$
\end{lemma}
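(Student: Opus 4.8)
The plan is to compute $h_F(A)$ directly using the Borcherds section $\Psi$ as a rational section of the metrized Hodge bundle, and then to check that the difference between $k[K:\bQ]\,h_F(A)$ and the displayed combination of the arithmetic intersection with $\sum_{r\in\bI}c_r\cT(r)$ and the Petersson Green function at $\sigma([A])$ comes only from finitely many vertical fibral components and from the additive constant relating $||\cdot||_F$ and $||\cdot||_{\rm Pet}$, each of which is bounded independently of $A$.

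First I would recall from \S\ref{sub_Borcherds&ht} that, over a suitable $\cO_{K'}$ with $K\subseteq K'$, $\Psi$ is a rational section of $\homg^{\otimes k}$, so its arithmetic divisor $\widehat{\Div}(\Psi)=\big(\Div(\Psi),\,-\log||\Psi||_{\rm Pet}\big)$ — whose Green function has log-log growth along $\bcH\setminus\cH$ as required in \cite{BBK} — represents the first arithmetic Chern class of $\homg^{\otimes k}$ in the arithmetic Chow group of Burgos Gil--Kramer--K\"uhn. Combined with the identity $h_F(A)=ht_{\homg}([\cA])$ and the bilinearity of the height pairing, and with the normalization of \ref{stacky}, this gives
\begin{equation*}
h_F(A)\;=\;\frac{1}{k\,[K':\bQ]}\,\big(\widehat{\Div}(\Psi).[\cA_{\cO_{K'}}]\big),
\end{equation*}
the parenthesized quantity being the unnormalized arithmetic intersection number. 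Here one must first observe that this intersection is proper and that $||\Psi(\sigma([A]))||_{\rm Pet}$ is a nonzero real number: since $\End(A_{\overline K})=\cO_F$, \Cref{HZ=spEnd} shows $[A]$ lies on no $T(r)$ with $r\in\bI$, hence $[\cA]$ is contained neither in any $\cT(r)$ nor in any fibral component. Next, using \ref{vertical_div} to write $\Div(\Psi)=\sum_{r\in\bI}c_r\cT(r)+\sum_{p\in S}\cE_p$ with $S$ a fixed finite set of primes determined by $\Psi$, and reading off the archimedean term from the Green function $-\log||\Psi||_{\rm Pet}$ (corrected by $\log$ of the fixed ratio between $||\cdot||_F$ and $||\cdot||_{\rm Pet}$ on $\homg^{\otimes k}$), one gets
\begin{equation*}
k\,[K':\bQ]\,h_F(A)\;=\;[\cA].\!\sum_{r\in\bI}c_r\cT(r)\;+\;[\cA].\!\sum_{p\in S}\cE_p\;-\!\!\sum_{\sigma:K'\inj\bC}\frac{1}{\#\Aut(A_{\overline K})}\log||\Psi(\sigma([A]))||_{\rm Pet}\;+\;O\big([K':\bQ]\big),
\end{equation*}
where the $O([K':\bQ])$ absorbs the metric-constant term (it contributes $\log(\text{const})\cdot\#\{\sigma\}/\#\Aut$, and $\#\{\sigma\}=[K':\bQ]$, $\#\Aut\geq1$). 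Dividing by $k[K':\bQ]$ and invoking the compatibility of arithmetic intersection numbers with base change (so that the normalized quantities are independent of $K'$) reduces the lemma to a bound, uniform in $A$, for $\tfrac{1}{k[K':\bQ]}\,[\cA].\sum_{p\in S}\cE_p$.

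The only step that requires a genuine argument rather than bookkeeping is this last bound, and it is the step I expect to watch most carefully. Writing $\cE_p=\sum_j b_{j,p}\,\cE_{j,p}$ with $\cE_{j,p}$ the reduced irreducible components of the fiber $\bcH_{\overline{\bF}_p}$, each $\cE_{j,p}$ is an effective Cartier divisor dominated (up to a factor bounded in terms of $D$ and $p$, which matters only for the finitely many $p\mid D$) by the full fiber, which is a component of $\Div(p)$ on $\bcH_{\cO_{K'}}$; since $[\cA]$ is a horizontal section, $\tfrac{1}{[K':\bQ]}\Div(p).[\cA]=\log p$, so $\tfrac{1}{[K':\bQ]}\cE_{j,p}.[\cA]\leq C_p\log p$ with $C_p$ depending only on the local geometry of $\bcH$ at $p$. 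Hence $\tfrac{1}{k[K':\bQ]}\big|[\cA].\sum_{p\in S}\cE_p\big|\leq\tfrac{1}{k}\sum_{p\in S}C_p\big(\sum_j|b_{j,p}|\big)\log p$, a constant depending only on $\Psi$ (equivalently on $F$ and $\fa$), not on $A$. This yields the lemma with a constant $C_1$ determined by the fixed data. The main obstacle is thus not conceptual but lies in handling the various normalization conventions correctly — the precise sign in front of the Petersson term, the factor of two in the Green current, and the stack-theoretic intersection formula of \ref{stacky} — none of which affects the existence of a uniform $C_1$.
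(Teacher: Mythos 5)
Your proof follows the same structure as the paper's: view $\Psi$ as a rational section of $\homg^{\otimes k}$, use $h_F(A)=ht_{\homg}([\cA])$ to express $k[K:\bQ]h_F(A)$ as the sum of a horizontal intersection with $\sum_r c_r\cT(r)$, a vertical intersection with the fibral components $\sum_p\cE_p$, and the archimedean Green-function term, and then observe that the vertical term and the discrepancy between $\|\cdot\|_F$ and $\|\cdot\|_{\rm Pet}$ are bounded independently of $A$. The paper cites \cite{Yang}*{eqn.~(2.3)} for the decomposition and simply asserts that $\cE(A)=\tfrac{1}{[K:\bQ]}[\cA].(\sum_p\cE_p)$ is bounded ``by definition''; your argument actually supplies a proof of that assertion, by dominating each irreducible component $\cE_{j,p}$ by (a bounded multiple of) the full fiber $\Div(p)$ and invoking the projection formula $\tfrac{1}{[K':\bQ]}\Div(p).[\cA]=O(\log p)$ --- a genuinely useful addition, since the paper's ``by definition'' elides precisely this point. (Minor caveat: the projection formula in the stacky normalization of \ref{stacky} produces $\log p/\#\Aut$ rather than $\log p$ on the nose, but since $\#\Aut\geq 1$ this only strengthens your upper bound, and your use of $\#\Aut(A_{\overline{K}})\geq 1$ for the archimedean constant is likewise sound.) Your initial observation that $\End(A_{\overline K})=\cO_F$ forces $[\cA]$ off every $\cT(r)$ via \Cref{HZ=spEnd} --- so the intersection is proper and $\|\Psi(\sigma([A]))\|_{\rm Pet}\neq 0$ --- is also exactly what the paper does. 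Overall: same route, with a more complete justification of the vertical bound.
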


\begin{proof}
By \Cref{HZ=spEnd}, if $[A]$ lies on $T(r)$, then $\End(A_{\overline{K}})$ is strictly larger than $\cO_F$. Hence our assumption implies that $[A]$ does not lie on any $T(r)$. It follows that the $1$-cycle $[\cA]$ intersects $\sum_{r\in \bI} c_r T(r)$ properly. Set $\cE(A)=\frac 1{[K:\bQ]}[\cA].(\sum_p\cE_p)$, where $\sum_p\cE_p$ was defined in \ref{vertical_div}. By definition, $\cE(A)$ is bounded by an absolute constant independent of $A$. 
Moreover,
$$h_F(A)=ht_{\homg}([\cA])=\frac 1 {k[K:\bQ]} [\cA]. \sum_{r\in \bI} c_r\cT(r) -\frac 1 {k[K:\bQ]} \sum_{\sigma:K\inj\bC}\frac 1{\# \Aut(A_{\overline{K}})}\log ||\Psi(\sigma([A]))||_F+ \frac 1 k \cE(A),$$
(see, for example, \cite{Yang}*{eqn.~(2.3)}). The lemma then follows from the fact that $||\cdot||_F$ and $||\cdot||_{\text{Pet}}$ differ by an absolute constant independent of $A$.
\end{proof}

We end this subsection with a formula for the average Faltings height of abelian surfaces corresponding to points in $T_{\fp}[A]$ when $A$ has good reduction at all the primes of $K$ above $p$. The idea of proof builts on use Autissier's idea in \cite{Au}. From now on, we say that $A$ has \emph{good reduction at $p$} (resp. \emph{ordinary reduction at $p$}) if $A$ has good reduction (resp. ordinary reduction) at all the primes of $K$ above $p$. 

\begin{proposition}\label{thm_glb}
Let $p$ be a prime as in \cref{phecke}. If $A$ has good reduction at $p$, then 
 $$ \sum_{[B]\in T_{\fp}[A]}h_F(B)=(p+1)h_F(A)+\frac{p-1}2 \log p.$$
\end{proposition}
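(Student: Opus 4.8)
The plan is to follow Autissier's strategy from \cite{Au}, which computes the average Faltings height over a Hecke orbit by comparing the integral models of $B$ and its $\fp$-isogenous quotients and tracking the contribution of the isogeny kernels place by place. Since $p$ splits completely in the narrow Hilbert class field, $T_\fp[A]$ consists of $p+1$ abelian surfaces $B = A/C$, where $C$ ranges over the order-$p$ subgroups of $A[\fp]$, each equipped with its induced $\cO_F$-structure and the $\fa$-polarization determined by $\lambda$ (by \Cref{Tpmoduli}). At the archimedean places, each such $B$ has the same complex uniformization as $A$ up to the action of $\diag(1,\lambda)$, so the change in the metrized Hodge bundle is governed by the term $\Im(z_i/\lambda_i)$ versus $\Im z_i$; summing the logarithmic defects over the $p+1$ quotients and over the two embeddings of $F$ will produce the $\frac{p-1}{2}\log p$ term (this is where $\Nm\lambda = p$ enters).

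The key steps, in order: (1) Fix a prime $\fp = (\lambda)$ with $\lambda$ totally positive and $\lambda\lambda' = p$, and use \Cref{Tpmoduli} to identify $T_\fp[A]$ with $\{A/C : C \subset A[\fp], \#C = p\}$ together with the correct polarization data. (2) Since $A$ has good reduction at every place above $p$, extend each isogeny $\phi_C : A \to B_C$ to an isogeny of semi-abelian schemes over $\cO_K$ (in fact over the relevant localizations), using the Néron model property and the fact that $A[\fp]$ spreads out to a finite flat group scheme. (3) Apply the comparison of Faltings heights under isogeny: for an isogeny $\phi: A \to B$ of degree $p$ over $\cO_K$, one has $h_F(B) - h_F(A) = \frac{1}{[K:\bQ]}\big(\text{log of the index of } \phi^*\omega_B \text{ in } \omega_A \text{ at finite places}\big) - (\text{archimedean correction})$; this is the standard formula (e.g. from \cite{F85} or Raynaud's appendix). (4) Sum over the $p+1$ quotients: at a finite place $w$ not above $p$, every $\phi_C$ is étale and contributes nothing; at a place $w$ above $p$, sum the local contributions of the $p+1$ kernels $C \subset A[\fp]_{\cO_{K_w}}$ — because $A[\fp]$ is a finite flat group scheme of order $p$ over $\cO_{K_w}$ with an $\cO_F/\fp = \bF_p$-action, the $p+1$ subgroups of order $p$ in the generic fiber have closures whose local contributions to $\sum_C (\text{index at } w)$ telescope to exactly $\frac{p-1}{2}[K_w:\bQ_p]\log p$ (this is the Hilbert-modular analogue of Autissier's computation for $X_0(1)$, and is insensitive to whether the reduction is ordinary or supersingular — only good reduction is needed). (5) At the archimedean places, compute $\sum_C \log\|\cdot\|$-defect directly from the uniformization in \Cref{Tpmoduli}: each $B_C$ is $\bC^2/(\cO_F(z_1/\lambda, z_2/\lambda') + (\fa\fd_F)^{-1})$, so the metric defect is $-\frac12\log(\Nm\lambda) = -\frac12\log p$ per quotient, but one must be careful that the period lattice normalization absorbs part of this; the net archimedean contribution combines with (4) to give the stated clean formula. (6) Assemble: $\sum_{B \in T_\fp[A]} h_F(B) = (p+1)h_F(A) + \frac{p-1}{2}\log p$.

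The main obstacle will be step (4): carefully bounding/computing the sum of the finite local contributions over all $p+1$ subgroups of $A[\fp]$ at the primes above $p$, uniformly over the reduction type. Autissier \cite{Au}*{Thm.~5.1} handles this only under the hypothesis of potentially ordinary reduction, where the connected–étale sequence of $A[\fp]$ splits nicely and the count of "how many of the $p+1$ subgroups hit the connected part" is transparent. In the potentially supersingular case, $A[\fp]$ over $\cO_{K_w}$ (after base change) is a self-dual finite flat group scheme of order $p$ that need not be $\mu_p$ or $\bZ/p$, so one must argue more intrinsically — e.g., via Oort–Tate classification of order-$p$ group schemes, or by working with the Dieudonné/Breuil module and computing the sum of valuations of the "Hasse invariant"-type quantities attached to the $p+1$ submodules. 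The point is that the \emph{sum} over all subgroups is rigid even though individual terms are not, because $\sum_C (\deg \text{ of the part of } C \text{ landing in the formal group})$ is computed by a fixed lattice-index formula. I expect that invoking Grothendieck–Messing theory (already used in \S\ref{finite}) or the Oort–Tate parametrization, together with the fact that the Hodge bundle of $A$ versus $B_C$ differs exactly by the coLie algebra of $\ker\phi_C$, will let one carry out this computation; this is precisely the extension of Autissier's result claimed in the statement, and writing it out carefully is the crux of the proof.
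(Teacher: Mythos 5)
Your proposal takes a genuinely different route from the paper, and the route you chose has a real gap at precisely the step you flag as ``the crux.''

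The paper does \emph{not} attempt a direct computation of the local terms at primes above $p$ for arbitrary (possibly supersingular) good reduction. Instead, it splits the argument into two parts. First, it proves that the quantity $\sum_{[B]\in T_{\fp}[A]}h_F(B)-(p+1)h_F(A)$ is \emph{independent} of $A$, among $A$ with good reduction at $p$. This is done by introducing the $\Gamma_0(\fp)$-level modular surface $\cH(\fp)$ with its two forgetful maps $\pi_1,\pi_2\colon\cH(\fp)_{\bZ_{(p)}}\to\cH_{\bZ_{(p)}}$, and showing each $\pi_i$ is finite flat (quasi-finiteness via an ordinary/supersingular case analysis using that $p$ splits in $F$; flatness from miracle flatness, since $\cH(\fp)$ is Cohen--Macaulay and $\cH$ is regular with $0$-dimensional fibers; properness from the literature). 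Once finite flatness is in hand, the independence is a formal consequence of Autissier's argument (normality of $\cH$, irreducibility of $\cH_{\bF_v}$). Second, the paper computes the constant by choosing a convenient $A$ --- a CM abelian surface with \emph{ordinary} reduction at $p$ --- and applying Faltings' isogeny formula: exactly one of the $p+1$ order-$p$ subgroups of $A[\fp]$ has multiplicative closure, contributing $[K_v:\bQ_p]\log p$ to $\log\#(e^*\Omega^1_{\ker\phi/\cO_{K_v}})$, and the other $p$ are \'etale and contribute $0$. This reduction-to-the-ordinary-case is exactly how the paper avoids ever computing in the supersingular case.

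Your step (4) proposes instead to carry out the local computation directly at every finite place above $p$, uniformly over the reduction type, claiming the sum of contributions over the $p+1$ subgroup closures ``telescopes'' to the right answer. You correctly identify that this is where Autissier's hypothesis of (potentially) ordinary reduction is used, and that the supersingular case requires a different argument (Oort--Tate, Dieudonn\'e/Breuil modules, etc.) --- but you do not carry out that argument; you assert that it ``should'' work and acknowledge that ``writing it out carefully is the crux.'' That leaves the proposal incomplete at its central point. (There is also a bookkeeping slip: with Faltings' normalization, the finite-place contribution at each $v\mid p$ should sum over $C$ to $[K_v:\bQ_p]\log p$, so that after subtracting from $(p+1)\cdot\tfrac12\log p$ one gets $\tfrac{p-1}{2}\log p$; your stated target of $\tfrac{p-1}{2}[K_w:\bQ_p]\log p$ per place double-counts the $\tfrac12\log\deg\phi$ term, which is why your step (5) archimedean discussion is also redundant with step (3).) The paper's moduli-theoretic independence argument is not merely a stylistic alternative: it is what makes the extension from ordinary to general good reduction work without touching the supersingular Lie-algebra computation at all.
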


\begin{proof}The proof consists two parts. We first show that $\sum_{[B]\in T_{\fp}[A]}h_F(B)-(p+1)h_F(A)$ is independent of $A$. Then we compute this quantity in a particular case.
\begin{enumerate}
\item
Let $\cH(\fp)$ denote the Hilbert modular surface over $\bZ$ with $\Gamma_0(\fp)$ level structure. The stack $\cH(\fp)$ parameterizes degree $p$ isogenies $\phi:A_1\rightarrow A_2$ between abelian varieties with $\cO_F$-multiplication such that $\ker(\phi)\subset A_1[\fp]$ (see for example \cite{Pa}*{sec.~2.2}). 
Let $\pi_i:\cH(\fp)_{\bZ_{(p)}}\rightarrow \cH_{\bZ_{(p)}}$ for $i=1,2$ be the forgetful map that sends $\phi$ to $A_i$.
We claim that each $\pi_i$ is finite flat. 

We first show that $\pi_i$ is quasi-finite. Let $v$ be any finite place of $K$ over $p$. The group scheme $\cA[\fp^{\infty}]$ of $\fp$-power torsions of $\cA$ is a $p$-divisible group of height 2, whose mod $v$ reduction has dimension 1. There are two cases: the mod-$v$ reduction of $\cA[\fp^{\infty}]$ is either ordinary, or supersingular. If ordinary, \cite{FC}*{\S VII.4} shows that there are only finitely many degree $p$ subgroups of the mod-$v$ reduction of $\cA[\fp]$. Therefore, by the modular interpretation of $\cH(\fp)$, the map $\pi_1$ is quasi-finite. Now we assume that the mod-$v$ reduction of $\cA[\fp^{\infty}]$ is supersingular. Since the number of degree $p$ subgroups of the reduction of $\cA[\fp]$ only depends on the isomorphism class of the $p$-divisible group, we may assume that $\cA$ has supersingular reduction at $v$. Since $p$ is split in $F$, the supersingular locus of $\cH_{\bF_v}$ is $0$-dimensional and hence there are only finitely many mod-$v$ points on $\cH$ corresponding to abelian surfaces isogenous to $\cA$ mod $v$. In particular, $\pi_1$ is quasi-finite. The same argument applies to $\pi_2$ when we study the kernel of the Rosati involution of $\phi$.

By \cite{Pa}*{2.1.3, Cor.~2.2.3}, the stack $\cH(\fp)$ is Cohen--Macaulay and $\cH$ is regular. Since all the fibers of $\pi_i$ are $0$-dimensional, then by \cite{EGAIV}*{II.6.1.5}, each $\pi_i$ is flat. On the other hand, the  $\pi_i$ are proper by \cite{Pa}*{the discussion after Def.~2.2.1}. Therefore, each $\pi_i$ is finite flat.

By the argument in \cite[Theorem 5.1]{Au}, one observes that the independence of the quantity $\sum_{[B]\in T_{\fp}[A]}h_F(B)-(p+1)h_F(A)$ on $A$ is a formal consequence of the finite-flatness of $\pi_i$, the normality of $\cH$, and the irreducibility of $\cH_{\bF_v}$ for every $v$ above $p$. 

\item
We now compute $\sum_{[B]\in T_{\fp}[A]}h_F(B)-(p+1)h_F(A)$ when $A$ has ordinary reduction at $p$.\footnote{One may also choose any CM abelian surface on $\cH$ and apply the formula for the Faltings height in \cite{lucia} to compute this difference.} Such an $A$ always exists: indeed, for a CM field $K_2$ containing $F$ such that $K_2$ is Galois over $\bQ$ of degree $4$ and $p$ splits completely in $K_2$, there exist abelian surfaces with CM by $K_2$ that correspond to points on $\cH$ and these abelian surfaces are ordinary at $p$.

Now assume that $A$ has good ordinary reduction at $p$ and we prove the result for such $A$. We first enlarge $K$ so that all $[B]\in T_{\fp}[A]$ are defined over $K$. 
Since $\cA[\fp^\infty]$ over $\cO_{K_v}$ is $1$-dimensional, it only contains a unique degree $p$ subgroup which is multiplicative (equivalently, in the ordinary case, not \'etale) for any $v$ above $p$. 
Then by \Cref{Tpmoduli}, there exists only one element in $T_{\fp}[\cA]$ that corresponds to an isogeny with multiplicative kernel. 
Now we apply \cite{F85}*{Lem.~5}. By the standard calculation on $\Omega^1$ of finite flat groups of degree $p$, at each $v$ there are $p$ out of $p+1$ elements in $T_{\fp} [A]$ such that the term $\log(\#e^*(\Omega^1_{\ker \phi/\cO_{K_v}}))$ in Faltings' formula is $0$, and one element such that $\log(\#e^*(\Omega^1_{\ker \phi/\cO_{K_v}}))=-[K_v:\bQ_p]\log p$. We obtain the desired formula by summing up all the local contributions. \qedhere
\end{enumerate}
\end{proof}

\subsection{Proof of \Cref{main}}\label{proof}
\begin{para}\label{sketch}
We first sketch the proof of \Cref{main}. First, we use \Cref{main} to choose a good Hirzebruch--Zagier divisor $\sum_{r\in \bI}c_r\cT(r)=\Div(\Psi)$. By \Cref{thm_glb}, we have $\sum_{[B]\in T_\fp([A])}h_F(B)$ is $O(p\log p)$. On the other hand, the local results in \S\S\ref{finite}-\ref{arch} show that each local term in \Cref{ht_compare} is $o(p\log p)$. 
By local term, we mean either $-\sum_{[B]\in T_{\fp}[A]}\log ||\Psi(\sigma([B]))||_\pet$ for all $\sigma:K\hookrightarrow \bC$ or the $v$-adic intersection number $(\sum_{[\cB]\in T_{\fp}[\cA]}[\cB],\sum_{r\in \bI}c_r\cT(r))_v$ for finite places $v$. This implies that $T_\fp([\cA])$ intersects $\sum_{r\in \bI}c_r\cT(r)$ at infinitely many places as $p\rightarrow \infty$ and then \Cref{main} follows from \Cref{notsimple}.

\end{para}

\begin{para}\label{reduction}
If $\cO_F\subsetneq\End(A_{\overline{K}})$, then by the classification of the endomorphism ring of absolutely simple abelian surfaces over a characteristic zero field, $\End(A_{\overline{K}})\otimes \bQ$ is either an indefinite quaternion algebra over $\bQ$ or a degree $4$ CM field. In the first case, $\cA_{\overline{\bF}_v}$ is not simple if the quaternion algebra splits at ${\rm{char } }\, \bF_v$. In the second case, there exists a positive density set of primes $\ell$ so that $\cA_{\bF_v}$ is supersingular for all $v|\ell$ and hence not geometrically simple. Therefore, to prove \Cref{main}, we assume that $\End(A_{\overline{\bQ}})=\cO_F$ from now on and hence for any $[B]\in T_\fp([A])$, we also have $\End(B_{\overline{K}})=\cO_F$. Therefore, all $T_{\fp}([\cA])$ intersect Hirzebruch--Zagier divisors properly.
\end{para}

\begin{proof}[Proof of \Cref{main}]
We now show that there are infinitely many primes $v$ of $K$ such that $\cA_{\overline{\bF}_v}$ is not simple.
Assume, for the sake of contradiction, that there is a finite set of places $\Sigma$ of $K$ such that $\cA$ has geometrically simple, or bad reduction modulo $v$ for $v\notin \Sigma$. \Cref{good_r} and \Cref{cpt_div} give a meromorphic Hilbert modular form $\Psi$ such that $\Div(\Psi)$ is a compact special divisor $\sum_{r\in \bI} c_r\cT(r)$ with $D|r$ for all $r\in \bI$. The intersection $(T_{\fp}([\cA]),\sum_{r\in \bI}c_r\cT(r))$ has a nonzero $v$-adic term only when $v\in \Sigma$ by \Cref{notsimple}.
Throughout the proof, $p$ will denote a prime which is totally split in the narrow Hilbert class field of $F$ and $v\nmid p$ for all $v\in \Sigma$. We now explain how to choose an increasing sequence of primes $p$ such that we can bound the local terms as described in \ref{sketch} by $\epsilon p \log p$ for arbitrary $\epsilon>0$. 

By \Cref{equinon,bestnonarch} and \Cref{multiplicity}, outside a density-zero set of primes $p$, the $v$-adic intersection  
$$\Big(\sum_{[\cB]\in T_{\fp}[\cA]}[\cB],\sum_{r\in \bI}c_r\cT(r)\Big)_v \leq C_1 \bigg( \sum_{r\in \bI}|c_r| \cdot \Big((p+1)(3e_v\log(2\log p))+\epsilon p (2e_v\log p+1))\Big)\bigg),$$ 
where $C_1$ is the absolute constant mentioned in \Cref{multiplicity}. Notice that $2\log p\geq \log N$.

Let $C_2$ be the density of primes splitting completely in the narrow Hilbert class field of $F$. By taking $\epsilon_1=\epsilon,\,\epsilon_2=\frac{C_2}{4[K:\bQ]}$ in \Cref{thm_arch}, we have that, for $N\gg 0$ and for $p\in [N^{1/2},N]$ in a set of density at least $3C/4$, for all $\sigma:K\inj \bC$, the archimedean term $$-\sum_{[B]\in T_{\fp}[A]}\log ||\Psi(\sigma([B]))||_\pet<\epsilon p\log p.$$

We have shown that for $N\gg 0$, there exists a positive density set of primes $p\in [N^{1/2}, N]$ such that all the local terms are $o(p\log p)$. On the other hand, by \Cref{thm_glb}, $\displaystyle{\sum_{[B]\in T_\fp([A])}h_F(B)}$ has order of magnitude $p\log p$. We then obtain the desired contradiction by applying \Cref{ht_compare} to all $[B] \in T_\fp([A])$.
\end{proof}

\begin{bibdiv}
\begin{biblist}

\bibselect{bib}

\end{biblist}
\end{bibdiv}

\end{document}